\newcommand{\eref}[1]{(\ref{#1})}
\newcommand{\tref}[1]{Theorem \ref{#1}}
\newcommand{\pref}[1]{Proposition \ref{#1}}
\newcommand{\lref}[1]{Lemma \ref{#1}}
\newcommand{\rref}[1]{Remark \ref{#1}}
\newcommand{\sref}[1]{Section \ref{#1}}
\newcommand{\fref}[1]{Figure \ref{#1}}
\newcommand{\tabref}[1]{Table \ref{#1}}
\theoremstyle{plain} \newtheorem{thm}{Theorem}[section] \newtheorem{lem}{Lemma}[section] \newtheorem{prop}{Proposition}[section] 
\theoremstyle{definition} \newtheorem{rem}{Remark}[section]
\title[Finite element approximation for the Stokes equations]{Penalty method with P1/P1 finite element approximation for the Stokes equations under slip boundary condition}
\author[Takahito Kashiwabara]{Takahito Kashiwabara}
\address{Department of Mathematics, Tokyo Institute of Technology, 2-12-1 Ookayama, Meguro, Tokyo 152-8551, Japan}
\email{tkashiwa@math.titech.ac.jp}
\author[Issei Oikawa]{Issei Oikawa}
\address{Faculty of Science and Engineering, Waseda University, 3-4-1 Okubo, Shinjuku, Tokyo 169-8555, Japan}
\email{oikawa@aoni.waseda.jp}
\author[Guanyu Zhou]{Guanyu Zhou}
\address{Graduate School of Mathematical Sciences, The University of Tokyo, 3-8-1 Komaba, Meguro, Tokyo 153-8914, Japan}
\email{zhoug@ms.u-tokyo.ac.jp}
\subjclass[2010]{Primary: 65N30; Secondary: 35Q30.}
\keywords{Finite element method; Stokes equations; Slip boundary condition; Penalty method; Reduced-order numerical integration}
\begin{document}
\begin{abstract}
	We consider the P1/P1 or P1b/P1 finite element approximations to the Stokes equations in a bounded smooth domain subject to the slip boundary condition.
	A penalty method is applied to address the essential boundary condition $u\cdot n = g$ on $\partial\Omega$, which avoids a variational crime and simultaneously facilitates the numerical implementation.
	We give $O(h^{1/2} + \epsilon^{1/2} +  h/\epsilon^{1/2})$-error estimate for velocity and pressure in the energy norm, where $h$ and $\epsilon$ denote the discretization parameter and the penalty parameter, respectively.
	In the two-dimensional case, it is improved to $O(h + \epsilon^{1/2} +  h^2/\epsilon^{1/2})$ by applying reduced-order numerical integration to the penalty term.
	The theoretical results are confirmed by numerical experiments.
\end{abstract}
\maketitle
\section{Introduction} \label{sec:intro}
In this paper, letting $\Omega\subset\mathbb R^N\, (N=2,3)$ be a bounded smooth domain, we consider the Stokes equations subject to the non-homogeneous slip boundary condition as follows:
\begin{equation} \label{eq:Stokes slip BC}
	\left\{
	\begin{aligned}
		u - \nu \Delta u + \nabla p &= f &\mbox{in} &&\Omega, \\
		\mathrm{div}\,u &= 0 &\mbox{in} &&\Omega, \\
		u\cdot n &= g &\mbox{on} &&\partial\Omega, \\
		(I - n\otimes n) \sigma(u, p)n &= \tau &\mbox{on} &&\partial\Omega,
	\end{aligned}
	\right.
\end{equation}
where $u:\Omega\to\mathbb R^N$ and $p:\Omega\to\mathbb R$ are the velocity and pressure of the fluid respectively, and $\nu>0$ is a viscosity constant.
Moreover, $f$ represents the given body force, $g$ the prescribed outgoing flow on the boundary $\Gamma := \partial\Omega$, and $\tau$ the prescribed traction vector on $\Gamma$ in the tangential direction, with $\sigma(u, p) = -pI + \nu(\nabla u + (\nabla u)^T)$ being the Cauchy stress tensor associated with the fluid.
The outer unit normal to the boundary $\Gamma$ is denoted by $n$.
The first term in \eref{eq:Stokes slip BC}$_1$ is added in order to ensure coercivity of the problem without taking into account rigid body movements.
We impose the compatibility condition between \eref{eq:Stokes slip BC}$_2$ and \eref{eq:Stokes slip BC}$_3$ which reads
\begin{equation} \label{eq:compatibility condition for g}
	\int_\Gamma g\,d\gamma = 0.
\end{equation}

The slip boundary condition \eref{eq:Stokes slip BC}$_3$--\eref{eq:Stokes slip BC}$_4$ (or its variant the Navier boundary condition) is now widely accepted as one of the standard boundary conditions for the Navier-Stokes equations.
There are many applications of the slip boundary conditions to real flow problems; here we only mention the coating problem \cite{SaSc81} and boundary conditions of high Reynolds number flow \cite{Lay99}.
For more details on the application side of slip-type boundary conditions, we refer to Stokes and Carey \cite{StCa11} and references therein; see also John \cite{Joh02} for generalization combined with leak-type boundary conditions.

In the present paper, our motivation to consider problem \eref{eq:Stokes slip BC} consists in dealing with some mathematical difficulties which are specific to its finite element approximation.
As shown by Solonnikov and \v{S}\v{c}adilov in \cite{SoSc73} (see also Beir\~{a}o da Veiga \cite{BdV04} for a generalized non-homogeneous problem), proving the existence, uniqueness, and regularity of a solution of \eref{eq:Stokes slip BC} does not reveal essentially more difficulty compared with the case of Dirichlet boundary conditions.
Then one is led to hope that its finite element approximation could also be treated analogously to the Dirichlet case.
However, it is known that a naive discretization of \eref{eq:Stokes slip BC}, especially when a smoothly curved domain $\Omega$ is approximated by a polyhedral domain $\Omega_h$, leads to a variational crime in which we no longer obtain convergence of approximate solutions.

Let us describe this phenomenon assuming $g=0$ and considering piecewise linear approximation of velocity.
In view of the weak formulation of the continuous problem, see \eref{eq:weak form with constraint} below, a natural choice of the space to approximate velocity would be (we adopt the notation of \sref{sec:finite element approximation}):
\begin{equation} \label{eq:Vhn}
	V_{hn} = \{ v_h\in V_h \,:\, v_h\cdot n_h = 0 \mbox{ on }\Gamma_h \}, \quad \Gamma_h:=\partial\Omega_h,
\end{equation}
where $n_h$ denotes the outer unit normal associated to $\Gamma_h$.
Now suppose that $N=2$ and that any two adjacent edges that constitute $\Gamma_h$ are not parallel.
Then, one readily sees that $V_{hn}$ above reduces to $\mathring V_h = V_h \cap H^1_0(\Omega_h)^2$.
As a result, the finite element solution computed using $V_{hn}$ is nothing but the one satisfying the Dirichlet boundary condition, which completely fails to approximate the slip boundary condition.
For $N=3$ or quadratic approximation (or whatever else), we may well expect similar undesirable reduction in the degrees of freedom that should have been left for the velocity components, which accounts for the variational crime.

One way to overcome the variational crime is to replace the constraint in \eref{eq:Vhn} by $(v_h\cdot n)(P) = 0$ for each boundary node $P$.
This strategy was employed by Tabata and Suzuki \cite{TaSu00} where $\Omega$ is a spherical shell; see also Tabata \cite{Tab01,Tab06}.
Extension of the idea to the quadratic approximation was proposed by B\"ansch and Deckelnick in \cite{BaDe99} using some abstract transformation $G_h: \Omega_h\to\Omega$ introduced by Lenoir \cite{Len86}.
For $\Omega$ of general shape, the exact values for $n(P)$ or $n\circ G_h(P)$ may not be available.
In this regard, some average of $n_h$'s near the boundary node $P$ can be used as approximation of those unavailable values.
This idea was numerically tested by B\"ansch and H\"ohn in \cite{BaHo00} for $N=3$ and by Dione, Tibirna and Urquiza \cite{DTU13} for $N=2$ (with penalty formulation), showing good convergence property.
However, rigorous and systematic evaluation of those approximations is non-trivial and does not seem to be known in the literature.
Moreover, implementation of constraints like $(v_h\cdot n)(P) = 0$ in a real finite element code is also non-trivial and requires special computational techniques (see e.g.\ Gresho and Sani \cite[p.\ 540]{GrSa00} and \cite[Section 5]{BaDe99}), which are not necessary to treat the Dirichlet boundary condition.

In view of these situations, in the present paper we would like to investigate a finite element scheme to \eref{eq:Stokes slip BC} such that: 1) rigorous error analysis can be performed; 2) numerical implementation is as easy as for the Dirichlet case.
With this aim we adopt a \emph{penalty approach} proposed by Dione and Urquiza \cite{DiUr14} (see also \cite{DTU13}) which, in the continuous setting, replaces the Dirichlet condition \eref{eq:Stokes slip BC}$_3$ by the Robin-type one involving a very small number (called the penalty parameter) $\epsilon>0$, i.e., $\sigma(u,p)n\cdot n + \frac1\epsilon (u\cdot n - g) = 0$ on $\Gamma$.
At the weak formulation level, this amounts to removing the constraint $v\cdot n = 0$ from the test function space and introducing a penalty term $\frac1\epsilon (u\cdot n - g, v\cdot n)_\Gamma$ in the weak form.
Our scheme transfers this procedure to the discrete setting given on $\Omega_h$; see \eref{eq:discrete problem with 2 variables} below.
Since the test function space for velocity is taken as the whole $V_h$ involving no constraints, this scheme facilitates implementation, which serves purpose 2) mentioned above.
It is indeed simple enough to be implemented by well-known finite element libraries such as \verb#FreeFem++# \cite{freefem} and \verb+FEniCS+ \cite{fenics}, as is presented in our numerical examples.

Let us turn our attention to the error analysis.
The first error estimate was given by Verf\"urth \cite{Ver85} who derived $O(h^{1/2})$ in the energy norm for $g=\tau=0$.
The same author proposed the Lagrange multiplier approach in \cite{Ver87,Ver91} for $\tau=0$.
Later, Knobloch \cite{Kno99} derived optimal error estimates (namely, $O(h)$ for linear-type approximation and $O(h^{3/2})$ for quadratic-type approximation) for $g=0$ and for various combinations of finite elements satisfying the LBB condition, assuming the existence of better approximation of $n$ than $n_h$.
The convergence (without rate) under minimal regularity assumptions was proved by the same author in \cite{Kno00}.
A different proof of $O(h^{3/2})$-estimate for the P2/P1 element was given by \cite{BaDe99} for $g=\tau=0$, assuming that $n\circ G_h$ is known.
The technique using $G_h$ was then exploited to study the penalty scheme in \cite{DiUr14}, again for the P2/P1 element and for $g=\tau=0$.

In the present paper, we study the penalty scheme for the P1/P1 element combined with pressure stabilization and also for the P1b/P1 element.
Our method to establish the error estimate is quite different from those of the preceding works mentioned above.
First, we address the non-homogeneous boundary conditions \eref{eq:Stokes slip BC}$_3$--\eref{eq:Stokes slip BC}$_4$ which were not considered previously.
Second, concerning the penalty scheme, we directly compare $(u, p)$ and $(u_h, p_h)$, whereas Dione and Urquiza \cite{DiUr14} introduced a penalized problem in the continuous setting, dividing the error estimates into two stages.

Third, we define our error (for velocity) to be $\|\tilde u - u_h\|_{H^1(\Omega_h)}$, where $\tilde u$ may be arbitrary smooth extension of $u$.
This differs from \cite{Ver87} and \cite{BaDe99,DiUr14} in which the errors were defined as $\|u - u_h\|_{H^1(\Omega\cap \Omega_h)}$ and $\|u - u_h\circ G_h\|_{H^1(\Omega)}$, respectively.
In view of practical computation, our choice of the error fits what is usually done in the numerical verification of convergence when $\Omega_h\neq\Omega$.
Compared with the method of Knobloch \cite{Kno99} who also employed $\|\tilde u - u_h\|_{H^1(\Omega_h)}$ as the error, the difference lies in the way the boundary element face $S$ on $\Gamma_h$ is mapped to a part of $\Gamma$.
In fact, he exploited the orthogonal projection from $\Gamma$ to $\Gamma_h$, the image of which is localized to each $S$.
Then he needed delicate arguments (see \cite[pp.\ 142--143]{Kno99}) to take into account the fact that it is not globally injective when $N=3$ (this point seems to be overlooked in \cite[p.\ 709]{Ver87}).
We, to the contrary, rely on the orthogonal projection $\pi$ from $\Gamma_h$ to $\Gamma$, which is globally bijective regardless of the space dimension $N$, provided the mesh size $h$ is sufficiently small.
This enables us to transfer the triangulation of $\Gamma_h$ to that on $\Gamma$ in a natural way, which is convenient to estimate surface integrals.
Complete proofs of the facts regarding $\pi$ used in this paper, which we could not find in the literature, are provided in Appendices \ref{sec:transformation between Gamma and Gammah} and \ref{sec:error of n and nh}.

Finally, we comment on the rate of convergence $O(h^{1/2})$ we obtain in our main result (\tref{thm:error estimate}) which is not optimal.
In our opinion, all the error estimates reported in the preceding works, which use $n_h$ to approximate $n$, remain $O(h^{1/2})$.
Verf\"urth \cite[Theorem 5.1]{Ver87} claimed $O(h)$; however, the estimate 
\begin{equation*}
	\left| \int_{\Gamma_h} u\cdot (n_h - n\circ\pi^{-1})\sigma_h \right| \le Ch \|u\|_{H^{1/2}(\Gamma_h)} \|\sigma_h\|_{H^{-1/2}(\Gamma_h)},
\end{equation*}
which was used to derive equation (5.12) there, seems non-trivial because $n_h$ is not smooth enough globally on $\Gamma_h$ (e.g.\ it does not belong to $H^{1/2}(\Gamma_h)$).
If $\|\sigma_h\|_{H^{-1/2}(\Gamma_h)}$ on the right-hand side is replaced by $\|\sigma_h\|_{L^2(\Gamma_h)}$, then one ends up with $O(h^{1/2})$ in the final estimate.
Dione and Urquiza \cite[Theorem 4]{DiUr14} claimed $O(h^{2/3})$; however, in equation (4.13) there, they did not consider the contribution
\begin{equation*}
	\frac1{\sqrt\epsilon} \|(u_\epsilon - \bar v_h)\cdot n\|_{L^2(\Gamma)},
\end{equation*}
which should appear inside the infimum over $v_h\in K_h$ even when $\Omega_h = \Omega$ (see Proposition 4.2 of Layton \cite{Lay99}).
If this contribution is taken into account, one obtains $O(h^{1/2})$ for the final result.

To overcome the sub-optimality, in \sref{sec:reduced-order integration} we investigate the penalty scheme in which reduced-order numerical integration is applied to the penalty term.
This method was proposed in \cite{DTU13} and was shown to be efficient by numerical experiments for $N=2$.
We give a rigorous justification for this observation in the sense that the error estimate improves to $O(h)$ if $\epsilon = O(h^2)$.
Our numerical example shows that the reduced-order numerical integration gives better results also for $N=3$, although this is not proved rigorously.

In our numerical results presented in \sref{sec:numerical examples}, we not only provide numerical verification of convergence but also discuss how the penalty parameter $\epsilon$ affects the performance of linear solvers.
We find that too small $\epsilon$ can lead to non-convergence of iterative methods such as GMRES, whereas sparse direct solvers such as UMFPACK always manage to solve the linear system.

\section{Formulation of the Stokes problem with slip boundary condition}
We present our notation for the function spaces and bilinear forms that we employ in this paper.
The boundary $\Gamma$ of $\Omega$ is supposed to be at least $C^{1,1}$-smooth.
The standard Lebesgue and Sobolev(-Slobodetski\u{\i}) spaces are denoted by $L^p(\Omega)$ and $W^{s,p}(\Omega)$ respectively, for $p\in[1, \infty]$ and $s\ge0$.
When $p=2$, we let $H^s(\Omega) := W^{s,2}(\Omega)$.
Their vectorial versions are indicated as $L^p(\Omega)^N$ and $W^{m, p}(\Omega)^N$ etc.; however, in case they appear as subscripts, say $\|\cdot\|_{L^p(\Omega)^N}$, we write $\|\cdot\|_{L^p(\Omega)}$ for the sake of simplicity.
The spaces above may also be defined for $\Gamma$; see e.g.\ \cite{Nec12}.

We define function spaces to describe velocity and pressure as follows:
\begin{align*}
	V = H^1(\Omega)^N, \qquad Q = L^2(\Omega).
\end{align*}
Also we set 
\begin{align*}
	V_n &:= \{ v\in H^1(\Omega)^N \,:\, (\mathrm{Tr}\,v)\cdot n = 0 \mbox{ on } \Gamma \}, \\
	\mathring Q &:= L^2_0(\Omega) = \{ q\in L^2(\Omega) \,:\, \textstyle\int_\Omega q\,dx = 0 \},
\end{align*}
where $\mathrm{Tr}$ stands for the trace operator; for simplicity, $\mathrm{Tr}\,v$ is indicated as $v$ in the following.
Finally, to describe a quantity which corresponds to the normal component of the traction vector, i.e.\ $\sigma(u, p)n\cdot n$, we introduce
\begin{equation} \label{eq:Lambda}
	\Lambda := H^{-1/2}(\Gamma) = H^{1/2}(\Gamma)',
\end{equation}
where the prime means the dual space.

Let $G\subset\mathbb R^N$ be an open set.
The $L^2(G)$- and $L^2(\partial G)$-inner products are denoted by $(\cdot, \cdot)_G$ and $(\cdot, \cdot)_{\partial G}$, respectively.
Moreover, we define bilinear forms $a_G,\, b_G,\, c_{\partial G}$, for $u,v\in H^1(G)^N,\, q\in L^2(G),\, \lambda,\mu\in L^2(\partial G)$, by
\begin{align}
	a_G(u, v) &= \int_G u\cdot v\,dx + \frac\nu2 \int_G (\nabla u + (\nabla u)^T) : (\nabla v + (\nabla v)^T) \,dx, \label{eq:bilinear form aG} \\
	b_G(v, q) &= -\int_G \mathrm{div}\,v\, q\,dx, \label{eq:bilinear form bG} \\
	c_{\partial G}(\lambda, \mu) &= \int_{\partial G} \lambda\mu\, d\gamma, \label{eq:bilinear form cG}
\end{align}
where the dot and colon mean the inner products for vectors and matrices, respectively.
When $G = \Omega$, we use the abbreviation $a=a_\Omega,\, b=b_\Omega,\, c=c_{\partial\Omega}$.
In the following, $c(\cdot, \cdot)$ is also interpreted as the duality pairing between $H^{1/2}(\Gamma)$ and $H^{-1/2}(\Gamma)$.

The variational formulation for \eref{eq:Stokes slip BC} consists in finding $(u, p)\in V\times\mathring Q$ such that $u\cdot n = g$ on $\Gamma$ and
\begin{equation} \label{eq:weak form with constraint}
	\left\{
	\begin{aligned}
		a(u, v) + b(v, p) &= (f, v)_\Omega + (\tau, v)_\Gamma \quad && \forall v\in V_n, \\
		b(u, q) &= 0 && \forall q\in\mathring Q.
	\end{aligned}
	\right.
\end{equation}
It is well known that this variational equation admits a unique solution under the compatibility condition \eref{eq:compatibility condition for g} and that it becomes regular according to the smoothness of $\Gamma,\, f,\, g,\, \tau$; see \cite{BdV04,SoSc73}.
Throughout this paper, we assume $\Gamma\in C^{2,1},\, f\in L^2(\Omega)^N,\, g\in H^{3/2}(\Gamma),\, \tau\in H^{1/2}(\Gamma)^N$, so that the regularity $(u, p) \in H^2(\Omega)^N \times H^1(\Omega)$ is assured.

Letting $\lambda := -\sigma(u, p)n\cdot n$, we see that $(u, p, \lambda) \in V\times Q\times \Lambda$ satisfies
\begin{equation} \label{eq:weak form without constraint}
	\left\{
	\begin{aligned}
		a(u, v) + b(v, p) + c(v\cdot n, \lambda) &= (f, v)_\Omega + (\tau, v)_\Gamma \quad && \forall v\in V, \\
		b(u, q) &= 0 && \forall q\in Q, \\
		c(u\cdot n - g, \mu) &= 0 && \forall \mu\in\Lambda.
	\end{aligned}
	\right.
\end{equation}
In fact, \eref{eq:weak form without constraint}$_1$ follows from Green's formula.
By \eref{eq:compatibility condition for g} one has $b(u, 1) = 0$, which implies \eref{eq:weak form without constraint}$_2$.
Multiplying $u\cdot n = g$ by a test function $\mu$ and integrating over $\Gamma$ lead to \eref{eq:weak form without constraint}$_3$.

\begin{rem} \label{rem:additive constant}
	Observe that $(u, p+k, \lambda+k)$ with any $k\in\mathbb R$ is also a solution of \eref{eq:weak form without constraint}.
	According to this fact, we will adjust the additive constant of $p$ (and thus of $\lambda$) later on, before we start error analysis of the finite element approximation (see \rref{rem:adjustment of additive constant} below).
\end{rem}

\section{Finite element approximation} \label{sec:finite element approximation}
\subsection{Triangulation and FE spaces} \label{subsec:triangulation}
Let us introduce a regular family of triangulations $\mathcal T_h$ of a polyhedral domain $\Omega_h$, which is assigned the \emph{mesh size} $h>0$.
Namely, we assume that:
\begin{enumerate}[(H1)]
	\item each $T\in\mathcal T_h$ is a closed $N$-simplex such that $h_T := \mathrm{diam}\,T \le h$;
	\item $\Omega_h = \bigcup_{T\in\mathcal T_h}T$;
	\item the intersection of any two distinct elements is empty or consists of their common face of dimension $\le N-1$;
	\item there exists a constant $C>0$, independent of $h$, such that $\rho_T \le Ch_T$ for all $T\in\mathcal T_h$ where $\rho_T$ denotes the diameter of the inscribed ball of $T$.
\end{enumerate}
We define the \emph{boundary mesh} $\mathcal S_h$ inherited from $\mathcal T_h$ by
\begin{equation*}
	\mathcal S_h = \{ S\in\mathcal S_h \,:\, \text{$S$ is an $(N-1)$-face of some $T\in\mathcal T_h$} \}.
\end{equation*}
Then we see that $\mathcal S_h$ satisfies the requirements that are analogous to (H1)--(H4) above, and especially we have $\Gamma_h := \partial\Omega_h = \bigcup_{S\in\mathcal S_h} S$.
We assume that $\Gamma_h$ approximates $\Gamma$ in the following sense:
\begin{enumerate}[(H1)]
	\setcounter{enumi}{4}
	\item the vertices of every $S\in\mathcal S_h$ lie on $\Gamma$.
\end{enumerate}
Throughout this paper, we confine ourselves to the case where $h>0$ is sufficiently small, which will not be emphasized in the following.
In particular, all the results given in Appendices \ref{sec:transformation between Gamma and Gammah} and \ref{sec:error of n and nh} are supposed to hold true.

As mentioned in \sref{sec:intro}, we focus on the P1/P1 and P1b/P1 finite element approximations for velocity and pressure, to which we refer as $l=1$ and $l=1b$, respectively.
Namely, we define
\begin{align*}
	V_h &=
	\begin{cases}
		\{ v_h \in C(\overline\Omega_h)^N \,:\, v_h|_T \in P_1(T)^N \quad \forall T\in\mathcal T_h \} & \text{if} \quad l=1, \\
		\{ v_h \in C(\overline\Omega_h)^N \,:\, v_h|_T \in P_1(T)^N \oplus B(T)^N \quad \forall T\in\mathcal T_h \} & \text{if} \quad l=1b,
	\end{cases}
	\\
	Q_h &= \{ q_h \in C(\overline\Omega_h) \,:\, q_h|_T \in P_1(T) \quad \forall T\in\mathcal T_h \},
\end{align*}
where $B(T)$ stands for the space spanned by the bubble function on $T$.
We also set $\mathring V_h := V_h \cap H^1_0(\Omega_h)^N$ and $\mathring Q_h := Q_h \cap L^2_0(\Omega_h)$.
We consider a discrete version of the space $\Lambda$ (recall \eref{eq:Lambda}) as
\begin{equation*}
	\Lambda_h = \{ \lambda_h \in L^2(\Gamma_h) \,:\, \lambda_h|_S \in P_1(S) \quad \forall S\in\mathcal S_h \},
\end{equation*}
which is a discontinuous $P_1$ finite element space on $\Gamma_h$.

We turn our attention to interpolation operators.
In order to deal with the situation $\Omega\neq\Omega_h$, we first extend functions defined in $\Omega$ to $\tilde\Omega$, which is a fixed bounded smooth domain containing $\overline{\Omega\cup\Omega_h}$.
For this purpose we consider an arbitrary extension operator $P: W^{m, p}(\Omega)\to W^{m, p}(\tilde\Omega)$ satisfying the stability condition
\begin{equation*}
	\|Pf\|_{W^{m,p}(\tilde\Omega)} \le C\|f\|_{W^{m,p}(\Omega)} \qquad \forall f\in W^{m,p}(\Omega),
\end{equation*}
where $C$ is a constant depending only on $N,\, \Omega,\, m,\, p$.
Such $P$ does exist; for example, if $m=0$ we may exploit the zero-extension as $P$, and if $m\ge1$ it can be constructed e.g.\ by Nikolskii's method (see \cite{Nec12}).
In view of the lift theorem which concerns a right inverse of the trace operator, we may also extend functions given on $\Gamma$ to those in $\tilde \Omega$.
We agree to use the same symbol $P$ to refer to this extension as well.
Then, given a function $f$ in $\Omega$ or on $\Gamma$, we denote $Pf$ by $\tilde f$ for simplicity in the notation.

Now we let $I_h$ and $R_h$ represent the Lagrange interpolation operator to linear FE spaces and a local regularization operator to linear FE spaces, respectively.
Then, from the theory of interpolation error estimates (see \cite[Section 4]{BrSc07}) combined with the stability of extension operators, we have
\begin{align*}
	\|\tilde f - I_h\tilde f\|_{H^m(\Omega_h)}   &\le Ch^{2-m} \|f\|_{H^2(\Omega)}, &&f\in H^2(\Omega),\, m=0,1, \\
	\|\tilde f - R_h\tilde f\|_{H^m(\Omega_h)} &\le Ch^{1-m} \|f\|_{H^1(\Omega)}, &&f\in H^1(\Omega),\, m=0,1, 
\end{align*}
where the constants $C$ depend only on $N$, on the constant in assumption (H4) above, and on a reference element.
For $L^2(\Gamma_h)$-estimates, we first apply a trace inequality on each boundary element and then add them up to obtain
\begin{align*}
	\|\tilde f - I_h\tilde f\|_{L^2(\Gamma_h)}  &\le C\|\tilde f - I_h\tilde f\|_{L^2(\Omega_h)}^{1/2} \|\tilde f - I_h\tilde f\|_{H^1(\Omega_h)}^{1/2} \\
								     &\le Ch^{3/2} \|\tilde f\|_{H^2(\tilde\Omega)} \le Ch^{3/2}\|f\|_{H^{3/2}(\Gamma)} &&f\in H^{3/2}(\Gamma), \\
	\|\tilde f - R_h\tilde f\|_{L^2(\Gamma_h)} &\le Ch^{1/2} \|f\|_{H^{1/2}(\Gamma)} &&f\in H^{1/2}(\Gamma).
\end{align*}

\subsection{FE scheme with penalty}
We propose the following discrete problem to approximate \eref{eq:Stokes slip BC}:
choose $\epsilon>0$ suitably small and find $(u_h, p_h) \in V_h\times Q_h$ such that, for all $(v_h, q_h) \in V_h\times Q_h$,
\begin{equation} \label{eq:discrete problem with 2 variables}
	\left\{
	\begin{aligned}
		a_h(u_h, v_h) + b_h(v_h, p_h) + \frac1\epsilon c_h(u_h\cdot n_h - I_h\tilde g, v_h\cdot n_h) &= (\tilde f, v_h)_{\Omega_h} + (\tilde\tau, v_h)_{\Gamma_h}, \\
		b_h(u_h, q_h) &= d_h(p_h, q_h),
	\end{aligned}
	\right.
\end{equation}
where $n_h$ is the outer unit normal of $\Gamma_h$, and $a_h := a_{\Omega_h},\, b_h := b_{\Omega_h},\, c_h := c_{\Gamma_h}$ (recall \eref{eq:bilinear form aG}--\eref{eq:bilinear form cG} above).
Moreover, $\tilde f,\, \tilde g,\, \tilde\tau$ represent the extensions of $f,\, g,\, \tau$ respectively, which are discussed in the previous subsection.
$d_h(\cdot, \cdot)$ is a pressure-stabilizing term, which is present only when $l=1$ and is defined by
\begin{equation*}
	d_h(p_h, q_h) = \eta h^2 (\nabla p_h, \nabla q_h)_{\Omega_h}, \qquad \eta := \left\{
	\begin{aligned}
		1 && \mbox{ if } & \quad l=1, \\
		0 && \mbox{ if } & \quad l=1b.
	\end{aligned} \right.
\end{equation*}
We remark that $\eta$ for the case $l=1$ can be any positive constant; here, we suppose it to be $1$ for simplicity.
We state the well-posedness of this discrete problem.

\begin{prop} \label{prop:discrete well-posedness}
	There exists a unique solution $(u_h, p_h) \in V_h\times Q_h$ of \eref{eq:discrete problem with 2 variables}.
\end{prop}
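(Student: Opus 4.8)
Since \eref{eq:discrete problem with 2 variables} is a square linear system posed in the finite-dimensional space $V_h\times Q_h$, existence of a solution will follow once uniqueness is established; and uniqueness amounts to showing that the homogeneous problem — obtained by setting $\tilde f = 0$, $\tilde\tau = 0$ and $I_h\tilde g = 0$ — admits only the trivial solution. So the plan is to fix a solution $(u_h, p_h)\in V_h\times Q_h$ of the homogeneous problem and prove $(u_h,p_h)=(0,0)$.

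First I would test the first equation with $v_h = u_h$ and the second with $q_h = p_h$, and use the latter to substitute for $b_h(u_h, p_h)$ in the former. This gives
\[
	a_h(u_h, u_h) + \frac1\epsilon \|u_h\cdot n_h\|_{L^2(\Gamma_h)}^2 + d_h(p_h, p_h) = 0.
\]
All three terms on the left are nonnegative — in particular $a_h(u_h, u_h) \ge \|u_h\|_{L^2(\Omega_h)}^2$ thanks to the zeroth-order term in $a_h$ (recall \eref{eq:bilinear form aG}) — hence each of them vanishes. From $a_h(u_h, u_h) = 0$ one immediately deduces $u_h = 0$, with no recourse to Korn's inequality needed; and in the case $l = 1$ the vanishing of $d_h(p_h, p_h) = \eta h^2\|\nabla p_h\|_{L^2(\Omega_h)}^2$ yields $\nabla p_h = 0$.

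It then remains to show $p_h = 0$. Since $u_h = 0$, the first equation of the homogeneous problem reduces to $b_h(v_h, p_h) = 0$ for all $v_h\in V_h$. Taking $v_h(x) = x$, which belongs to $V_h$ for either choice of $l$ and satisfies $\mathrm{div}\,v_h = N$, gives $\int_{\Omega_h} p_h\,dx = 0$, so $p_h\in\mathring Q_h$. If $l = 1$, then $\nabla p_h = 0$ together with the connectedness of $\Omega_h$ forces $p_h$ to be a global constant, hence $p_h = 0$ by the mean-zero property. If $l = 1b$, I would invoke the discrete inf-sup (LBB) stability of the MINI element: there is $\beta > 0$ such that $\sup_{v_h\in\mathring V_h\setminus\{0\}} b_h(v_h, q_h)/\|v_h\|_{H^1(\Omega_h)} \ge \beta\|q_h\|_{L^2(\Omega_h)}$ for all $q_h\in\mathring Q_h$; applied to $p_h$, which satisfies $b_h(v_h, p_h) = 0$ in particular for all $v_h\in\mathring V_h\subset V_h$, this yields $p_h = 0$. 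Uniqueness, and therefore existence, follows.

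The argument is largely routine; the only structural points are the cancellation of the indefinite term $b_h(u_h, p_h)$ when the two equations are combined, leaving a sum of nonnegative quantities, and the recovery of the pressure. For the latter it is worth emphasizing that $p_h$ automatically has zero mean — a direct consequence of the test space for velocity being the whole of $V_h$, carrying no normal-component constraint on the boundary — after which one only needs the stabilization term ($l=1$) or the classical inf-sup stability of P1b/P1 ($l=1b$). No uniformity in $h$ of the constants is required at this stage.
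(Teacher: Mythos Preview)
Your proof is correct and follows the same overall strategy as the paper's: reduce to uniqueness for the homogeneous system, test with $(u_h,p_h)$ to obtain a sum of nonnegative terms equal to zero, conclude $u_h=0$ and (for $l=1$) $\nabla p_h=0$, then kill the remaining pressure constant using a test function with nonvanishing divergence together with the inf-sup condition when $l=1b$. The only tactical differences are that you bypass Korn's inequality by reading off $u_h=0$ directly from the zeroth-order part of $a_h$, and you establish $p_h\in\mathring Q_h$ first via the explicit choice $v_h(x)=x$ (whereas the paper first shows $p_h$ is constant and then picks any $v_h$ with $\int_{\Gamma_h} v_h\cdot n_h\neq0$); both are harmless reorderings of the same argument.
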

The proof relies on the following discrete versions of Korn's inequality and the inf-sup condition:
\begin{align}
	\alpha \|v_h\|_{H^1(\Omega_h)}^2 &\le a_h(v_h, v_h) && \forall v_h\in V_h, \label{eq:uniform Korn inequality} \\
	C \|q_h\|_{L^2(\Omega_h)} &\le \sup_{v_h\in\mathring V_h} \frac{b(v_h, q_h)}{\|v_h\|_{H^1(\Omega_h)}} +  C\eta h\|\nabla q_h\|_{L^2(\Omega_h)} && \forall q_h\in\mathring Q_h, \label{eq:uniform inf-sup condition}
\end{align}
where $\alpha>0$ depends only on $N,\Omega,\nu$ and $C>0$ depends only on $N,\Omega$.
Proofs of these uniform coercivity estimates are found in \cite[Theorem 4.3]{Kno99} and in \cite[Proposition 4]{Tab01}, respectively.
They will be of central importance when we perform error analysis in \sref{sec:error analysis}.
Now we prove the proposition.

\begin{proof}[Proof of \pref{prop:discrete well-posedness}]
	We notice that \eref{eq:discrete problem with 2 variables} is equivalently rewritten as follows: find $(u_h, p_h)\in V_h\times Q_h$ such that, for all $(v_h, q_h)\in V_h\times Q_h$,
	\begin{align}
		B_h(u_h, p_h; v_h, q_h) := &\, a_h(u_h, v_h) + b_h(v_h, p_h) - b_h(u_h, q_h) + d_h(p_h, q_h) + \frac1\epsilon c_h(u_h\cdot n_h, v_h\cdot n_h) \notag \\
						       = &\, (\tilde f, v_h)_{\Omega_h} + (\tilde\tau, v_h)_{\Gamma_h} + \frac1\epsilon c_h(I_h\tilde g, v_h\cdot n_h). \label{eq:Bh}
	\end{align}
	Because the problem is finite dimensional, it suffices to prove that $B_h(u_h, p_h; v_h, q_h)=0$ for all $v_h$ and $q_h$ implies $u_h=p_h=0$.
	Taking $(v_h, q_h) = (u_h, p_h)$ yields, thanks to \eref{eq:uniform Korn inequality}, $u_h=0$ and $d_h(p_h, p_h) = 0$.
	Below we deal with $l=1$ and $l=1b$ separately.
	
	Let $l=1$. Then, since $\eta>0$ we have $\nabla p_h = 0$, which implies that $p_h$ equals some constant $k$.
	By \eref{eq:Bh} one has $b_h(v_h, k) = -k \int_{\Gamma_h} v_h\cdot n_h\,d\gamma_h = 0$ for all $v_h\in V_h$.
	Choosing $v_h\in V_h$ such that $\int_{\Gamma_h} v_h\cdot n_h\,d\gamma_h \neq 0$ gives $k = 0$, so that $p_h = 0$.
	
	When $l=1b$, \eref{eq:Bh} reduces to $b(v_h, q_h) = 0$ for all $v_h\in V_h$.
	This combined with \eref{eq:uniform inf-sup condition} tells us that $p_h$ is equal to a constant.
	Discussing as in $l=1$, we conclude $p_h = 0$.
	This completes the proof of \pref{prop:discrete well-posedness}.
\end{proof}

Let us rewrite \eref{eq:discrete problem with 2 variables} in such a way that the discrete problem becomes comparable with the continuous one given in \eref{eq:weak form without constraint}.
To this end, we introduce $\lambda_h := \frac1\epsilon (u_h\cdot n_h - I_h\tilde g) \in \Lambda_h$ to obtain
\begin{equation} \label{eq:discrete problem with 3 variables}
	\left\{
	\begin{aligned}
		a_h(u_h, v_h) + b_h(v_h, p_h) +c_h(v_h\cdot n_h, \lambda_h) &= (\tilde f, v_h)_{\Omega_h} + (\tilde\tau, v_h)_{\Gamma_h} & \forall v_h\in V_h, \\
		b_h(u_h, q_h) &= d_h(p_h, q_h) & \forall q_h\in Q_h, \\
		c_h(u_h\cdot n_h - I_h\tilde g, \mu_h) &= \epsilon c_h(\lambda_h, \mu_h) & \forall \mu_h\in\Lambda_h.
	\end{aligned}
	\right.
\end{equation}
In fact, \eref{eq:discrete problem with 3 variables}$_1$ results from the symmetry $c_h(\lambda_h, \mu_h) = c_h(\mu_h, \lambda_h)$, and \eref{eq:discrete problem with 3 variables}$_3$ follows from multiplying the equation $u_h\cdot n_h - I_h\tilde g = \epsilon \lambda_h$ by any test function in $L^2(\Gamma_h) \supset \Lambda_h$.

\subsection{Auxiliary lemmas}
We collect several results which will be useful to evaluate the difference of $\Omega$ and $\Omega_h$ in terms of volume or surface integrals.
We denote the symmetric difference of $\Omega$ and $\Omega_h$ by
\begin{equation*}
	\Omega\triangle\Omega_h := (\Omega\setminus\Omega_h)\cup(\Omega_h\setminus\Omega)
\end{equation*}
and call it the \emph{boundary skin}.
The first lemma concerns a linear operator which extends functions in $V_h$ to $\tilde\Omega$ is equipped with suitable stability properties.
For the proof, we refer to \cite[Theorem 4.1]{Kno99}.
\begin{lem} \label{lem:discrete extension}
	There exists a linear operator $P_h: V_h\to H^1(\tilde\Omega)^N$ such that $P_hv_h|_{\overline\Omega_h} = v_h$ and
	\begin{align*}
		\|P_hv_h\|_{H^m(\tilde\Omega)} &\le C\|v_h\|_{H^m(\Omega_h)} && \forall v_h \in V_h,\, m=0,1, \\
		\|P_hv_h\|_{H^m(\Omega\triangle\Omega_h)} &\le Ch^{1/2}\|v_h\|_{H^m(\Omega_h)} && \forall v_h \in V_h,\, m=0,1,
	\end{align*}
	where the constants $C$ are independent of $h$.
\end{lem}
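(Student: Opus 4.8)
The plan is to construct $P_h$ explicitly by combining the standard Sobolev/Nikolskii extension $P$ (already introduced in the excerpt) with a correction that matches the boundary values of $v_h$, carried out on a thin collar of $\Omega_h$ near $\Gamma_h$. First I would fix, for each boundary simplex $S \in \mathcal S_h$, a single element $T_S \in \mathcal T_h$ having $S$ as a face, and extend the affine-plus-bubble map $v_h|_{T_S}$ off $\Omega_h$ across $S$ by an affine (or affine-plus-polynomial) reflection, obtaining a piecewise polynomial $w_h$ defined on a layer $U_h$ of width $O(h)$ covering $\Omega \setminus \Omega_h$. The key point is that $w_h$ agrees with $v_h$ on $\Gamma_h$, so $w_h - P v_h$ vanishes on $\Gamma_h$; I would then glue $P v_h$ (valid on $\Omega_h$) to $w_h$ (valid on $U_h$) using a cutoff that localizes near $\Gamma_h$, or more cleanly, set $P_h v_h := P v_h + \chi_h(w_h - P v_h)$ where $\chi_h$ is a Lipschitz cutoff equal to $1$ outside $\Omega_h$ and supported in a $O(h)$-neighborhood of $\Gamma_h$ inside $\Omega_h$, with $|\nabla \chi_h| \le C/h$. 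By (H5) and the smoothness of $\Gamma$, the boundary skin $\Omega \triangle \Omega_h$ lies in an $O(h^2)$-neighborhood of $\Gamma_h$, which is what will ultimately produce the gain in the second estimate.

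Next I would verify the stated bounds. Linearity and the restriction property $P_h v_h|_{\overline\Omega_h} = v_h$ are immediate from the construction (the correction term is supported where $\chi_h \neq 0$, but there $w_h = v_h$ only on $\Gamma_h$ — so one must instead build the extension so that inside $\Omega_h$ we keep $v_h$ exactly; this forces the cutoff to act only on the part of the collar lying in $\tilde\Omega \setminus \Omega_h$, i.e. one takes $P_h v_h = v_h$ on $\overline\Omega_h$ and $P_h v_h = \chi_h w_h$ on $\tilde\Omega\setminus\Omega_h$ with $\chi_h$ cutting off the far boundary of the collar, which automatically makes $P_h v_h$ continuous across $\Gamma_h$ since $w_h = v_h$ there). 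For the global $H^m$ stability one estimates $\|\chi_h w_h\|_{H^m}$ on the collar: the $L^2$ part is controlled by $\|v_h\|_{L^2}$ on the adjacent layer of elements via the affine extension and a change of variables on the reference element, while the $H^1$ part picks up a term $\|\nabla\chi_h\|_\infty \|w_h\|_{L^2(U_h)} \le Ch^{-1}\|w_h\|_{L^2(U_h)}$; since $|U_h| = O(h)$ per unit boundary area, $\|w_h\|_{L^2(U_h)} \le Ch^{1/2}\|v_h\|_{L^2(\text{collar in }\Omega_h)}$ by a scaled trace/inverse estimate, so the $h^{-1}$ and $h^{1/2}$ combine to stay bounded. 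The inverse estimate $\|v_h\|_{L^2(S)} \le Ch^{-1/2}\|v_h\|_{L^2(T_S)}$ on each boundary simplex is what ties the surface data of $v_h$ back to a volume norm.

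For the second, sharper estimate, I would use that $\Omega \triangle \Omega_h \subset \Omega_h^{\text{skin}}$ where $\Omega_h^{\text{skin}}$ is the union of boundary simplices $T_S$ — more precisely, the relevant region where $P_h v_h$ differs from $P v_h$ and contributes to the $\Omega\triangle\Omega_h$ norm is contained in an $O(h^2)$-thick neighborhood of $\Gamma_h$ (a consequence of (H5): vertices of $S$ on $\Gamma$ force $\mathrm{dist}(\Gamma,\Gamma_h) = O(h^2)$, as developed in the appendices referenced in the excerpt). On a region of thickness $O(h^2)$ lying within a boundary simplex of size $h$, a scaled estimate gives $\|P_h v_h\|_{L^2(\Omega\triangle\Omega_h)}^2 \le C h^2/h \cdot \|v_h\|_{L^2(\Omega_h)}^2 \cdot$ (the extra factor from integrating the polynomial over a sliver), and keeping track of the powers yields the $h^{1/2}$ prefactor; the $H^1$ case is identical since on $P_1$-plus-bubble functions the gradient is controlled by an inverse inequality, $\|\nabla P_h v_h\|_{L^2(\Omega\triangle\Omega_h)} \le Ch^{-1}\|P_h v_h\|_{L^2(\text{skin})} \le Ch^{-1}\cdot h^{1/2}\cdot Ch^{1/2}\|v_h\|_{L^2}$, absorbed back into $\|v_h\|_{H^1(\Omega_h)}$.

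I expect the main obstacle to be the bookkeeping of the reflection/extension near non-smooth parts of $\Gamma_h$ — specifically, making the extension well-defined and $H^1$-conforming (no spurious jumps) across edges where two boundary simplices meet at an angle, and ensuring the collar $U_h$ genuinely covers all of $\Omega \setminus \Omega_h$ without overlaps that would spoil the constants. This is exactly the delicate geometric point; the cleanest route, and the one I would actually follow, is to not prove it from scratch but to invoke \cite[Theorem 4.1]{Kno99} as the excerpt already does — the construction there handles these geometric subtleties, and the two displayed inequalities are then read off from the localization of the discrepancy to the $O(h^2)$-skin together with the scaled trace and inverse estimates sketched above.
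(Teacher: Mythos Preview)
The paper does not give an independent proof of this lemma; it simply writes ``For the proof, we refer to \cite[Theorem 4.1]{Kno99}.'' Your final sentence lands on exactly this, so at the level of what you would \emph{actually} invoke, you and the paper agree.

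That said, the construction you sketch before deferring to Knobloch has a real gap, and it is worth seeing where. The problematic step is the global $H^1$-stability when you glue with a cutoff $\chi_h$ supported in an $O(h)$-collar with $|\nabla\chi_h|\le C/h$. You estimate
\[
\|\nabla(\chi_h w_h)\|_{L^2(U_h)} \le \|\chi_h\nabla w_h\|_{L^2(U_h)} + Ch^{-1}\|w_h\|_{L^2(U_h)}
\]
and then claim that $\|w_h\|_{L^2(U_h)}\le Ch^{1/2}\|v_h\|_{L^2(\text{collar})}$, so ``the $h^{-1}$ and $h^{1/2}$ combine to stay bounded.'' But $h^{-1}\cdot h^{1/2}=h^{-1/2}$, which does not stay bounded. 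Concretely, take $v_h\equiv 1$: then $\|v_h\|_{H^1(\Omega_h)}\sim 1$, while the cutoff term contributes $h^{-1}\|w_h\|_{L^2(U_h)}\sim h^{-1}\,|U_h|^{1/2}\sim h^{-1/2}\to\infty$. So a cutoff of width $O(h)$ cannot give the first estimate. (Your power-counting for the $H^1$ skin estimate via an inverse inequality has the same defect; the correct route there is to apply the sliver scaling directly to $\nabla w_h$, which is itself a bounded-degree polynomial.)

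Knobloch's construction in \cite[Theorem~4.1]{Kno99} avoids this by not using a cutoff at all: he builds an auxiliary simplicial mesh on a fixed-width neighborhood of $\Gamma_h$ outside $\Omega_h$, compatible with $\mathcal S_h$ along $\Gamma_h$, and defines $P_hv_h$ as a genuine continuous piecewise-linear function on the union of the two meshes (with nodal values on the outer mesh determined by those of $v_h$ on the adjacent boundary elements). Continuity across $\Gamma_h$ and across the new interior faces is then automatic, and both estimates follow from element-by-element scaling with no $h^{-1}$ factors. Your instinct that ``the main obstacle is the bookkeeping of the reflection/extension near non-smooth parts of $\Gamma_h$'' is correct, and this is precisely what the mesh-based construction handles and the cutoff construction does not.
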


The second lemma gives estimates of volume integrals over the boundary skin, which are not restricted to discrete spaces.
However, notice that we require higher regularity for the right-hand side.
\begin{lem} \label{lem:boundary skin estimate}
	Let $f\in H^{m+1}(\tilde\Omega),\, m=0,1$. Then we have
	\begin{align*}
		\|f\|_{H^m(\Omega\triangle\Omega_h)} &\le Ch \|f\|_{H^{m+1}(\tilde\Omega)},
	\end{align*}
	where $C$ is independent of $h$.
\end{lem}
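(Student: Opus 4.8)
The plan is to exploit that the boundary skin $\Omega\triangle\Omega_h$ is contained in a very thin tubular neighborhood of $\Gamma$ and then to apply a thin-layer trace-type inequality. Since by (H5) the vertices of every $S\in\mathcal S_h$ lie on the surface $\Gamma\in C^{2,1}$, a flat boundary face deviates from $\Gamma$ by at most $O(h^2)$; hence there is a constant $c_0>0$, independent of $h$, such that
\begin{equation*}
	\Omega\triangle\Omega_h \subset U_\delta := \{ x\in\tilde\Omega \,:\, \mathrm{dist}(x,\Gamma) < \delta \}, \qquad \delta := c_0 h^2 ,
\end{equation*}
a geometric fact established in Appendix~\ref{sec:transformation between Gamma and Gammah}. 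Consequently it suffices to prove that, for every $w\in H^1(\tilde\Omega)$,
\begin{equation*}
	\|w\|_{L^2(U_\delta)} \le C\,\delta^{1/2}\,\|w\|_{H^1(\tilde\Omega)} .
\end{equation*}
Indeed, taking $w=f$ and recalling $\delta^{1/2}=\sqrt{c_0}\,h$ yields the case $m=0$, while for $m=1$ we apply the estimate to $f$ and to each component of $\nabla f$ to get $\|f\|_{H^1(\Omega\triangle\Omega_h)}^2 \le \|f\|_{L^2(U_\delta)}^2 + \|\nabla f\|_{L^2(U_\delta)}^2 \le C\delta\,\|f\|_{H^2(\tilde\Omega)}^2 = Ch^2\,\|f\|_{H^2(\tilde\Omega)}^2$.

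To prove the displayed thin-layer estimate I would first reduce to $w\in C^\infty(\overline{\tilde\Omega})$ by density and then pass to boundary normal coordinates. Since $\Gamma\in C^{2,1}$, for $\delta$ small the map $\Phi(y,s)=y+s\,n(y)$ is a $C^1$-diffeomorphism of $\Gamma\times(-\delta,\delta)$ onto $U_\delta$ whose Jacobian $J$ obeys $c\le J\le C$ uniformly in $\delta$. Writing $\partial_s[w(\Phi(y,s))]=\nabla w(\Phi(y,s))\cdot n(y)$ and applying the fundamental theorem of calculus along the normal segment from $y\in\Gamma$, one gets
\begin{equation*}
	|w(\Phi(y,s))|^2 \le 2|w(y)|^2 + 2\delta\int_{-\delta}^{\delta}|\nabla w(\Phi(y,t))|^2\,dt .
\end{equation*}
Integrating over $s\in(-\delta,\delta)$ and $y\in\Gamma$, using $c\le J\le C$, the trace inequality $\|w\|_{L^2(\Gamma)}\le C\|w\|_{H^1(\tilde\Omega)}$, and $\delta^2\le\delta$ for $h$ small, we arrive at
\begin{equation*}
	\|w\|_{L^2(U_\delta)}^2 \le C\delta\,\|w\|_{L^2(\Gamma)}^2 + C\delta^2\,\|\nabla w\|_{L^2(U_\delta)}^2 \le C\delta\,\|w\|_{H^1(\tilde\Omega)}^2 ,
\end{equation*}
which is the required bound.

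I expect the only genuinely technical point to be the existence of the boundary normal coordinate chart $\Phi$ with two-sided Jacobian bounds independent of $\delta$ (equivalently, the regularity and nondegeneracy of the signed distance function near $\Gamma$), together with the inclusion $\Omega\triangle\Omega_h\subset U_{c_0h^2}$; both are precisely the type of geometric statements collected in Appendices~\ref{sec:transformation between Gamma and Gammah} and~\ref{sec:error of n and nh}, so in the actual proof they can simply be cited. Everything else — the density reduction, the one-dimensional fundamental-theorem-of-calculus estimate, and the trace inequality — is routine. A variant avoiding the global chart is to cover $U_\delta$ by finitely many local graph patches of $\Gamma$, flatten $\Gamma$ on each patch, and argue patch by patch; this gives the same result but with less transparent constants.
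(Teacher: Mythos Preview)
Your proposal is correct and follows essentially the same route as the paper: the paper's proof reduces to the inclusion $\Omega\triangle\Omega_h\subset\Gamma(\tilde C_{0E}h^2)$ (their \pref{prop:estimate of Phi}) and then invokes \tref{thm:Lp norm in tubular neighborhood} with $p=2$ together with the trace inequality. What you sketch with boundary normal coordinates and the fundamental theorem of calculus is precisely the content of that theorem, so your argument simply inlines the appendix result rather than citing it.
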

\begin{proof}
	Since $\Omega\triangle\Omega_h \subset \Gamma(\tilde C_{0E}h^2)$ by \pref{prop:estimate of Phi}, \tref{thm:Lp norm in tubular neighborhood} for $\delta_1 = \tilde C_{0E}h^2$ and for $p=2$, combined with the trace inequality $\|f\|_{L^2(\Gamma)} \le C\|f\|_{H^1(\Omega)}$, leads to the desired result.
\end{proof}

The last lemma restates results concerning surface integrals obtained in Theorems \ref{thm:error estimate of surface integral} and \ref{thm:trace and transform}.
\begin{lem} \label{lem:surface integral estimate}
	Let $\pi$ be the orthogonal projection to $\Gamma$ defined in a tubular neighborhood of $\Gamma$. Then we have the following estimates:
	\begin{align*}
		\|f\circ\pi\|_{L^2(\Gamma_h)} &\le C\|f\|_{L^2(\Gamma)} && \forall f\in L^2(\Gamma), \\
		\left| \int_{\Gamma} f\,d\gamma - \int_{\Gamma_h} f\circ\pi\,d\gamma_h \right| &\le Ch^2 \|f\|_{L^1(\Gamma)} && \forall f\in L^1(\Gamma), \\
		\|f - f\circ\pi\|_{L^2(\Gamma_h)} &\le Ch \|f\|_{H^1(\tilde\Omega)} && \forall f\in H^1(\tilde\Omega),
	\end{align*}
	where the constants $C$ are independent of $h$.
	Here, $d\gamma$ and $d\gamma_h$ denote the surface elements associated with $\Gamma$ and $\Gamma_h$, respectively.
\end{lem}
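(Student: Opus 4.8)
The plan is to derive all three inequalities from a single change-of-variables device, noting that they are exactly the statements packaged in \tref{thm:error estimate of surface integral} (the first two) and \tref{thm:trace and transform} (the third) in Appendices \ref{sec:transformation between Gamma and Gammah} and \ref{sec:error of n and nh}, so that in the body the proof is a quotation. The device is the orthogonal projection $\pi\colon\Gamma_h\to\Gamma$, which for $h$ small is a bi-Lipschitz bijection carrying the boundary mesh $\mathcal S_h$ onto a triangulation of $\Gamma$. On each $S\in\mathcal S_h$ the surface measures are related by
\[
	\int_{\Gamma_h}\phi\,d\gamma_h=\int_\Gamma(\phi\circ\pi^{-1})\,\omega_h\,d\gamma\qquad(\phi\in L^1(\Gamma_h)),
\]
where $\omega_h$ denotes the surface Jacobian of $\pi^{-1}$. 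The decisive input, to be supplied by the appendix, is that $\omega_h$ stays bounded away from $0$ and satisfies the second-order bound $\|\omega_h-1\|_{L^\infty(\Gamma)}\le Ch^2$: since the vertices of each $S$ lie on $\Gamma$ by (H5), $\Gamma_h$ deviates from $\Gamma$ only at order $h^2$, and after the cancellations in the Gram determinant the Jacobian differs from $1$ at the same order.

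Granting this, the first estimate is immediate: $\|f\circ\pi\|_{L^2(\Gamma_h)}^2=\int_\Gamma|f|^2\omega_h\,d\gamma\le\|\omega_h\|_{L^\infty(\Gamma)}\|f\|_{L^2(\Gamma)}^2$. For the second, subtracting $\int_\Gamma f\,d\gamma$ from the identity above with $\phi=f\circ\pi$ gives
\[
	\int_{\Gamma_h}f\circ\pi\,d\gamma_h-\int_\Gamma f\,d\gamma=\int_\Gamma f\,(\omega_h-1)\,d\gamma,
\]
whose modulus is at most $\|\omega_h-1\|_{L^\infty(\Gamma)}\|f\|_{L^1(\Gamma)}\le Ch^2\|f\|_{L^1(\Gamma)}$.

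For the third estimate I would argue by density, taking $f\in C^1(\overline{\tilde\Omega})$ first. For $x\in\Gamma_h$ one has $|x-\pi(x)|\le Ch^2$, since $\Omega\triangle\Omega_h\subset\Gamma(\tilde C_{0E}h^2)$ by \pref{prop:estimate of Phi}; the fundamental theorem of calculus along the segment $[\pi(x),x]$, together with Cauchy--Schwarz, yields
\[
	|f(x)-f(\pi(x))|^2\le|x-\pi(x)|\int_{[\pi(x),x]}|\nabla f|^2\,ds\le Ch^2\int_{[\pi(x),x]}|\nabla f|^2\,ds.
\]
Integrating over $\Gamma_h$ and changing the outer variable to $y=\pi(x)\in\Gamma$ (Jacobian comparable to $1$), the segments $[\pi(x),x]$ run along the normals to $\Gamma$ and foliate the tube $\Gamma(Ch^2)$ with Jacobian bounded above and below (Appendix \ref{sec:transformation between Gamma and Gammah}, cf.\ \tref{thm:Lp norm in tubular neighborhood}), so the fibred integral is dominated by $\|\nabla f\|_{L^2(\Gamma(Ch^2))}^2\le\|f\|_{H^1(\tilde\Omega)}^2$. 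Hence $\|f-f\circ\pi\|_{L^2(\Gamma_h)}\le Ch\|f\|_{H^1(\tilde\Omega)}$. Both sides are Lipschitz in $f$ with respect to $\|\cdot\|_{H^1(\tilde\Omega)}$ — the left one via the uniform trace inequality $\|\cdot\|_{L^2(\Gamma_h)}\le C\|\cdot\|_{H^1(\Omega_h)}$ and the first estimate of this lemma — so the bound extends to all $f\in H^1(\tilde\Omega)$ by density of smooth functions in $H^1(\tilde\Omega)$.

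The bookkeeping above is routine; the real work, and the main obstacle, sits entirely in the geometric preliminaries of the appendices: showing that $\pi$ is a global bijection from $\Gamma_h$ onto $\Gamma$ for $h$ small (uniformly in the space dimension $N$), that the normal fibration of the $O(h^2)$-tube has Jacobian uniformly comparable to $1$, and — the most delicate point — that the surface Jacobian obeys the second-order estimate $\|\omega_h-1\|_{L^\infty(\Gamma)}\le Ch^2$ uniformly over the mesh. Once these facts about $\pi$ are established, \lref{lem:surface integral estimate} follows at once.
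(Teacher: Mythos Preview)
Your proposal is correct and follows essentially the same route as the paper: in the body the lemma is simply a restatement of \tref{thm:error estimate of surface integral} (together with its Remark) and \tref{thm:trace and transform}, and your sketch of those appendix results---the Jacobian estimate $\|\omega_h-1\|_{L^\infty}\le Ch^2$ (which is the paper's comparison $\|\sqrt{\det G_h}-\sqrt{\det G}\|_{L^\infty(S')}\le Ch^2$ rephrased on $\Gamma$) for the first two inequalities, and the fundamental theorem of calculus along normal segments combined with the tube foliation for the third---mirrors the paper's arguments in the appendix. The only cosmetic difference is that the paper works throughout with a common parametrization domain $S'$ and compares the metric tensors $G,G_h$ there, whereas you phrase the same computation intrinsically via the surface Jacobian $\omega_h$ of $\pi^{-1}$; these are equivalent.
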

\begin{rem} \label{rem:L2 norm on Gammah}
	By the first and third estimates together with trace inequalities, for all $f\in H^{1/2}(\Gamma)$ (hence its extension $\tilde f$ is in $H^1(\tilde\Omega)$) we obtain
	\begin{equation*}
		\|\tilde f\|_{L^2(\Gamma_h)} \le \|\tilde f - f\circ\pi\|_{L^2(\Gamma_h)} + \|f\circ\pi\|_{L^2(\Gamma_h)} \le C\|\tilde f\|_{H^1(\tilde\Omega)} \le C\|f\|_{H^{1/2}(\Gamma)}.
	\end{equation*}
\end{rem}

\section{Error analysis of penalty FE scheme} \label{sec:error analysis}
\subsection{Estimation of consistency error}
To explain the idea, suppose that $\Omega = \Omega_h$ and that $g=0$. Then subtracting the discrete problem \eref{eq:discrete problem with 3 variables} from the continuous one \eref{eq:weak form without constraint} would give us
\begin{equation} \label{eq:Galerkin orthogonality}
	\left\{
	\begin{aligned}
		a(u-u_h, v_h) + b(v_h, p-p_h) + c(v_h\cdot n, \lambda-\lambda_h) &= 0 & \forall v_h\in V_h, \\
		b(u-u_h, q_h) &= -d_h(p_h, q_h) & \forall q_h\in Q_h, \\
		c((u-u_h)\cdot n, \mu_h) &= -\epsilon c(\lambda_h, \mu_h) & \forall \mu_h\in\Lambda_h.
	\end{aligned}
	\right.
\end{equation}
This type of relation is essentially important in error analysis of the finite element method and is sometimes called the ``Galerkin orthogonality''.
If $\Omega\neq\Omega_h$, then such a relation is by no means available because subtraction is impossible.
However, even in this case one can still expect that an asymptotic version of \eref{eq:Galerkin orthogonality} should hold as $h$ becomes small.
The next proposition verifies this expectation.

\begin{prop} \label{prop:consistency error}
	Let $(u, p, \lambda)$ and $(u_h, p_h, \lambda_h)$ be solutions of \eref{eq:weak form without constraint} and \eref{eq:discrete problem with 3 variables} respectively.
	We assume the regularity of the data: $\Gamma\in C^{2,1},\, f\in L^2(\Omega)^N,\, g\in H^{3/2}(\Gamma),\, \tau\in H^{1/2}(\Gamma)^N$.
	Then there exist constants $C=C(N, \Omega, \nu, f, g, \tau)$, independent of $h$ and $\epsilon$, such that for all $(v_h, q_h, \mu_h) \in V_h\times Q_h\times\Lambda_h$ there holds
	\begin{equation} \label{eq:consistency error}
		\left\{
		\begin{aligned}
			|a_h(\tilde u - u_h, v_h) + b_h(v_h, \tilde p - p_h) + c_h(v_h\cdot n, \tilde \lambda - \lambda_h)| &\le Ch\|v_h\|_{H^1(\Omega_h)}, \\
			|b_h(\tilde u - u_h, q_h) + d_h(p_h, q_h)| &\le Ch\|q_h\|_{L^2(\Omega_h)}, \\
			|c_h((\tilde u - u_h)\cdot n_h, \mu_h) + \epsilon c_h(\lambda_h, \mu_h)| &\le Ch\|\mu_h\|_{L^2(\Gamma_h)}.
		\end{aligned}
		\right.
	\end{equation}
\end{prop}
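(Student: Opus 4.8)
The plan is to establish each of the three estimates in \eref{eq:consistency error} by starting from the exact equations \eref{eq:weak form without constraint}, rewriting all integrals over $\Omega$ or $\Gamma$ as integrals over $\Omega_h$ or $\Gamma_h$ (replacing the true data by their extensions $\tilde u,\tilde p,\tilde\lambda$), and then subtracting the discrete equations \eref{eq:discrete problem with 3 variables}. The residual terms that appear all measure the mismatch between the continuous and discrete domains; I would control them using the boundary-skin estimates of \lref{lem:boundary skin estimate}, the discrete extension operator of \lref{lem:discrete extension}, the surface-integral estimates of \lref{lem:surface integral estimate}, and the results on $n-n_h$ from the appendices referenced in the introduction. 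Throughout, I would absorb $\|u\|_{H^2(\Omega)}$, $\|p\|_{H^1(\Omega)}$, $\|\lambda\|_{H^{1/2}(\Gamma)}$ into the constant $C = C(N,\Omega,\nu,f,g,\tau)$ via the regularity bound stated just above the proposition.

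For the first estimate I would proceed term by term. For $a_h(\tilde u,v_h)$: since $v_h\in V_h$ extends to $P_hv_h\in H^1(\tilde\Omega)^N$ with $P_hv_h=v_h$ on $\overline\Omega_h$, I write $a_h(\tilde u,v_h) = a(\tilde u, P_hv_h) - a_{\Omega\setminus\Omega_h}(\tilde u,P_hv_h) + a_{\Omega_h\setminus\Omega}(\tilde u,v_h)$; the two boundary-skin pieces are $O(h)\|v_h\|_{H^1(\Omega_h)}$ because $\|\tilde u\|_{H^1(\Omega\triangle\Omega_h)}\le Ch\|\tilde u\|_{H^2(\tilde\Omega)}$ by \lref{lem:boundary skin estimate} and $\|P_hv_h\|_{H^1(\Omega\triangle\Omega_h)}\le Ch^{1/2}\|v_h\|_{H^1(\Omega_h)}$ by \lref{lem:discrete extension} (and similarly, more simply, for the $\Omega_h\setminus\Omega$ piece, where $v_h$ itself is used). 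The same decomposition handles $b_h(v_h,\tilde p)$ and the right-hand-side terms $(\tilde f,v_h)_{\Omega_h}$. For the surface term $c_h(v_h\cdot n,\tilde\lambda)$ one must compare $\int_{\Gamma_h}(v_h\cdot n)\tilde\lambda$ with $\int_\Gamma (v\cdot n)\lambda$: I would push forward to $\Gamma$ via the projection $\pi$, using that $\|\tilde\lambda - \lambda\circ\pi\|_{L^2(\Gamma_h)}\le Ch\|\tilde\lambda\|_{H^1(\tilde\Omega)}$, that the surface measures differ by $O(h^2)$, and that $\|v_h\|_{L^2(\Gamma_h)}$ is controlled by $\|v_h\|_{H^1(\Omega_h)}$ through a trace inequality (as in \rref{rem:L2 norm on Gammah}); the outcome is again $O(h)\|v_h\|_{H^1(\Omega_h)}$. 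Subtracting \eref{eq:discrete problem with 3 variables}$_1$ then gives the claim. The second estimate is analogous but easier: only volume terms occur, and $b_h(\tilde u,q_h) + d_h(p_h,q_h) = b_h(\tilde u,q_h) - b(u,q_h)$ up to $b_h(u_h,q_h)=d_h(p_h,q_h)$, so the boundary-skin argument for $\mathrm{div}\,\tilde u$ against $q_h$ (using the crude bound $\|q_h\|_{L^2(\Omega\triangle\Omega_h)}\le\|q_h\|_{L^2(\Omega_h)}$ together with $\|\tilde u\|_{H^1(\Omega\triangle\Omega_h)}=O(h)$) closes it.

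The third estimate is where the genuine subtlety lies. Starting from \eref{eq:weak form without constraint}$_3$, which says $u\cdot n = g$ on $\Gamma$, I must compare $c_h((\tilde u - u_h)\cdot n_h,\mu_h) + \epsilon c_h(\lambda_h,\mu_h)$ with the discrete identity \eref{eq:discrete problem with 3 variables}$_3$, namely $c_h((u_h\cdot n_h - I_h\tilde g),\mu_h) = \epsilon c_h(\lambda_h,\mu_h)$. Subtracting, the claim reduces to bounding $|c_h((\tilde u\cdot n_h - I_h\tilde g),\mu_h)| = |\int_{\Gamma_h}(\tilde u\cdot n_h - I_h\tilde g)\mu_h\,d\gamma_h|$ by $Ch\|\mu_h\|_{L^2(\Gamma_h)}$. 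I would split $\tilde u\cdot n_h - I_h\tilde g$ into three parts: $(\tilde g - I_h\tilde g)$, controlled by $\|\tilde g - I_h\tilde g\|_{L^2(\Gamma_h)}\le Ch^{3/2}\|g\|_{H^{3/2}(\Gamma)}$ from the interpolation estimates in \sref{subsec:triangulation}; $\tilde u\cdot(n_h - n)$, controlled by $\|n_h - n\|_{L^\infty(\Gamma_h)} = O(h)$ from the appendix on the error of $n$ and $n_h$, times $\|\tilde u\|_{L^2(\Gamma_h)}\le C\|u\|_{H^{1/2}(\Gamma)}$; and $(\tilde u\cdot n - g) = (\tilde u\cdot n - \widetilde{u\cdot n})$ restricted to $\Gamma_h$, which vanishes on $\Gamma$ but not on $\Gamma_h$ — here I use $\|\tilde u\cdot n - (u\cdot n)\circ\pi\|_{L^2(\Gamma_h)}\le Ch\|\tilde u\cdot n\|_{H^1(\tilde\Omega)}$ together with $(u\cdot n)\circ\pi = g\circ\pi$ and then compare $g\circ\pi$ with $\tilde g$ on $\Gamma_h$ by the same $O(h)$ device. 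Each piece is $O(h)\|\mu_h\|_{L^2(\Gamma_h)}$, which finishes the proof. The main obstacle, in my view, is precisely this last decomposition: one must be careful that $\tilde u\cdot n$ — where $n$ is the \emph{smooth} extension of the normal to $\Gamma$ — is the correct quantity whose trace vanishes after composition with $\pi$, and that the only obstruction to $\tilde u\cdot n_h = I_h\tilde g$ on $\Gamma_h$ comes from the geometric defect $n_h - n$ (size $h$) and the interpolation/restriction errors (also size $h$), with no hidden $\epsilon$-dependence; getting the bookkeeping of extensions, restrictions, and the projection $\pi$ exactly right is the delicate part, while all the analytic inputs are already packaged in Lemmas~\ref{lem:discrete extension}--\ref{lem:surface integral estimate} and the appendices.
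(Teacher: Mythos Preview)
Your proposal is correct and follows essentially the same approach as the paper: for each of the three estimates you rewrite the $\Omega_h$/$\Gamma_h$ integrals in terms of the corresponding $\Omega$/$\Gamma$ integrals plus boundary-skin and surface remainders, invoke the continuous and discrete equations to cancel the principal parts, and bound the remainders via Lemmas~\ref{lem:discrete extension}--\ref{lem:surface integral estimate} and \lref{lem:n and nh}. The only minor points are that you omitted the surface term $(\tilde\tau,v_h)_{\Gamma_h}-(\tau,P_hv_h)_\Gamma$ in part (i) (it is handled exactly as you describe for $c_h(v_h\cdot n,\tilde\lambda)$), and in part (ii) your ``crude bound'' on $q_h$ over $\Omega\triangle\Omega_h$ tacitly requires extending $q_h$ via $P_h$ first; both are harmless and consistent with the paper's argument.
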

\begin{proof}
	(i) Let us prove \eref{eq:consistency error}$_1$. Since $\int_{\Omega_h} = \int_\Omega + \int_{\Omega_h\setminus\Omega} - \int_{\Omega\setminus\Omega_h}$, we obtain
	\begin{align*}
		a_h(\tilde u - u_h, v_h) &= a(u, P_hv_h) - a_h(u_h, v_h) + a_{\Omega_h\setminus\Omega}(\tilde u, v_h) - a_{\Omega\setminus\Omega_h}(u, P_hv_h), \\
		b_h(v_h, \tilde p - p_h) &= b(P_hv_h, \tilde p) - b_h(v_h, p_h) + b_{\Omega_h\setminus\Omega}(v_h, \tilde p) - b_{\Omega\setminus\Omega_h}(P_hv_h, p). \\
		\intertext{On the other hand, it is clear that}
		c_h(v_h\cdot n_h, \tilde\lambda - \lambda_h) &= c(P_hv_h\cdot n, \tilde\lambda) - c_h(v_h\cdot n_h, \lambda_h) + c_h(v_h\cdot n_h, \tilde\lambda) - c(P_hv_h\cdot n, \lambda).
	\end{align*}
	Addition of the three equations above combined with \eref{eq:weak form without constraint}$_1$ and \eref{eq:discrete problem with 3 variables}$_1$ yields
	\begin{align}
		     &a_h(\tilde u - u_h, v_h) + b_h(v_h, \tilde p - p_h) + c_h(v_h\cdot n, \tilde \lambda - \lambda_h) \notag \\
		=\; & (f, P_hv_h)_\Omega - (\tilde f, v_h)_{\Omega_h} + (\tau, P_hv_h)_\Gamma - (\tilde\tau, v_h)_{\Gamma_h} \notag \\
		     &\quad + a_{\Omega_h\setminus\Omega}(\tilde u, v_h) - a_{\Omega\setminus\Omega_h}(u, P_hv_h) + b_{\Omega_h\setminus\Omega}(v_h, \tilde p) - b_{\Omega\setminus\Omega_h}(P_hv_h, p) \notag \\
		     &\quad + c_h(v_h\cdot n_h, \tilde\lambda) - c(P_hv_h\cdot n, \lambda) \notag \\
		=:  &\, I_1 + I_2 + I_3 + I_4 + I_5. \label{eq:representation of consistency error}
	\end{align}
	
	First we estimate volume integrals.
	It follows from Lemmas \ref{lem:discrete extension} and \ref{lem:boundary skin estimate} together with the stability of the extensions that
	\begin{align*}
		|I_1| &\le \|\tilde f\|_{L^2(\Omega\triangle\Omega_h)} \|P_hv_h\|_{L^2(\Omega\triangle\Omega_h)} \le Ch \|f\|_{L^2(\Omega)} \|v_h\|_{H^1(\Omega_h)}, \\
		|I_3| &\le C \|\tilde u\|_{H^1(\Omega\triangle\Omega_h)} \|P_hv_h\|_{H^1(\Omega\triangle\Omega_h)} \le Ch^{7/6} \|u\|_{H^2(\Omega)} \|v_h\|_{H^1(\Omega_h)}, \\
		|I_4| &\le C \|P_hv_h\|_{H^1(\Omega\triangle\Omega_h)} \|\tilde p\|_{L^2(\Omega\triangle\Omega_h)} \le Ch^{7/6} \|p\|_{H^1(\Omega)} \|v_h\|_{H^1(\Omega_h)}.
	\end{align*}
	
	Next we estimate surface integrals. 
	For $I_2$ we observe that
	\begin{align*}
		I_2 &= \int_\Gamma \tau\cdot P_hv_h\,d\gamma - \int_{\Gamma_h} (\tau\cdot P_hv_h)\circ\pi \,d\gamma_h + (\tau\circ\pi - \tilde\tau, (P_hv_h)\circ\pi)_{\Gamma_h} \\
		      &\hspace{6.2cm} + (\tilde\tau, (P_hv_h)\circ\pi - P_hv_h)_{\Gamma_h} \\
		      &=: I_{21} + I_{22} + I_{23}.
	\end{align*}
	It follows from \lref{lem:surface integral estimate}, \rref{rem:L2 norm on Gammah}, and \lref{lem:discrete extension} that
	\begin{align*}
		|I_{21}| &\le Ch^2 \|\tau\cdot P_hv_h\|_{L^1(\Gamma)} \le Ch^2 \|\tau\|_{L^2(\Gamma)} \|v_h\|_{H^1(\Omega_h)}, \\
		|I_{22}| &\le Ch \|\tilde\tau\|_{H^1(\tilde\Omega)} \|P_hv_h\|_{H^1(\tilde\Omega)} \le Ch\|\tau\|_{H^{1/2}(\Gamma)} \|v_h\|_{H^1(\Omega_h)}, \\
		|I_{23}| &\le \|\tilde\tau\|_{L^2(\Gamma_h)}\cdot Ch\|P_hv_h\|_{H^1(\tilde\Omega)} \le Ch \|\tau\|_{H^{1/2}(\Gamma)} \|v_h\|_{H^1(\Omega_h)}.
	\end{align*}
	For $I_5$ we observe that
	\begin{align*}
		I_5 &= c_h(v_h\cdot (n_h - n\circ\pi), \tilde\lambda) + c_h( (v_h - P_hv_h)\cdot (n\circ\pi), \tilde\lambda) \\
		      &\hspace{3.9cm} + c_h((P_hv_h\cdot n)\circ\pi, \tilde\lambda - \lambda\circ\pi) \\
		      &\qquad + \int_{\Gamma_h} (P_hv_h\cdot n\,\lambda)\circ\pi\,d\gamma_h - \int_{\Gamma} P_hv_h\cdot n\,\lambda\,d\gamma
		      =: I_{51} + I_{52} + I_{53} + I_{54}.
	\end{align*}
	It follows from Lemmas \ref{lem:n and nh}, \ref{lem:surface integral estimate}, and \ref{lem:discrete extension} that
	\begin{align*}
		|I_{51}| &\le Ch\|v_h\|_{L^2(\Gamma_h)} \|\tilde \lambda\|_{L^2(\Gamma_h)} \le Ch(\|u\|_{H^2(\Omega)} + \|p\|_{H^1(\Omega)}) \|v_h\|_{H^1(\Omega_h)}, \\
		|I_{52}| &\le \|P_hv_h - (P_hv_h)\circ\pi\|_{L^2(\Gamma_h)} \|\tilde\lambda\|_{L^2(\Gamma_h)} \le Ch \|P_hv_h\|_{H^1(\tilde\Omega)} \|\tilde\lambda\|_{L^2(\Gamma_h)} \\
			    &\le Ch (\|u\|_{H^2(\Omega)} + \|p\|_{H^1(\Omega)}) \|v_h\|_{H^1(\Omega_h)}, \\
		|I_{53}| &\le \|(P_hv_h\cdot n)\circ\pi\|_{L^2(\Gamma_h)} \|\tilde\lambda - \lambda\circ\pi\|_{L^2(\Gamma_h)} \le C\|P_hv_h\|_{L^2(\Gamma)} \cdot Ch\|\tilde\lambda\|_{H^1(\tilde\Omega)} \\
			    &\le Ch (\|u\|_{H^2(\Omega)} + \|p\|_{H^1(\Omega)}) \|v_h\|_{H^1(\Omega_h)}, \\
		|I_{54}| &\le Ch^2 \|P_hv_h\cdot n\,\lambda\|_{L^1(\Gamma)} \le Ch^2 \|P_hv_h\|_{L^2(\Gamma)} \|\lambda\|_{L^2(\Gamma)} \\
			    &\le Ch^2 (\|u\|_{H^2(\Omega)} + \|p\|_{H^1(\Omega)}) \|v_h\|_{H^1(\Omega_h)}.
	\end{align*}
	Combining the above estimates with \eref{eq:representation of consistency error} and noting that $\|u\|_{H^2(\Omega)} + \|p\|_{H^1(\Omega)} \le C(\|f\|_{L^2(\Omega)} + \|g\|_{H^{3/2}(\Gamma)} + \|\tau\|_{H^{1/2}(\Gamma)})$ by the regularity theory of the Stokes equations, we conclude \eref{eq:consistency error}$_1$.
	
	(ii) Let us prove \eref{eq:consistency error}$_2$.
	One finds from \eref{eq:weak form without constraint}$_2$ and \eref{eq:discrete problem with 3 variables}$_2$ that
	\begin{align*}
		b_h(\tilde u - u_h, q_h) + d_h(p_h, q_h) &= b_h(\tilde u, q_h) - b(u, P_hq_h) =: I_6.
	\end{align*}
	By Lemmas \ref{lem:discrete extension} and \ref{lem:boundary skin estimate} we have
	\begin{equation*}
		|I_6| \le C\|\tilde u\|_{H^1(\Omega\triangle\Omega_h)} \|P_hq_h\|_{L^2(\Omega\triangle\Omega_h)} \le Ch^{7/6} \|u\|_{H^2(\Omega)} \|q_h\|_{L^2(\Omega_h)},
	\end{equation*}
	from which \eref{eq:consistency error}$_2$ follows.
	
	(iii) Let us prove \eref{eq:consistency error}$_3$.
	One finds from \eref{eq:weak form without constraint}$_3$ and $u\cdot n = g$ on $\Gamma$, which is due to \eref{eq:discrete problem with 3 variables}$_3$, that
	\begin{align*}
		     &c_h((\tilde u - u_h)\cdot n_h, \mu_h) + \epsilon c_h(\lambda_h, \mu_h) = c_h(\tilde u\cdot n_h - I_h\tilde g, \mu_h) \\
		=\; &c_h(\tilde u\cdot(n_h - n\circ\pi), \mu_h) + c_h((\tilde u - u\circ\pi)\cdot n\circ\pi, \mu_h) + c_h(g\circ\pi - \tilde g, \mu_h) \\
		     &\hspace{7.7cm} + c_h(\tilde g - I_h\tilde g, \mu_h) \\
		=:&\,I_7 + I_8 + I_9 + I_{10}.
	\end{align*}
	By Lemmas \ref{lem:n and nh}, \ref{lem:surface integral estimate}, and \ref{lem:discrete extension}, together with the interpolation error estimate, we have
	\begin{align*}
		|I_7| &\le Ch \|\tilde u\|_{L^2(\Gamma_h)} \|\mu_h\|_{L^2(\Gamma_h)} \le Ch \|u\|_{H^1(\Omega)} \|\mu_h\|_{L^2(\Gamma_h)}, \\
		|I_8| &\le \|\tilde u - u\circ\pi\|_{L^2(\Gamma_h)} \|\mu_h\|_{L^2(\Gamma_h)} \le Ch\|u\|_{H^1(\Omega)} \|\mu_h\|_{L^2(\Gamma_h)}, \\
		|I_9| &\le Ch \|\tilde g\|_{H^1(\tilde\Omega)} \|\mu_h\|_{L^2(\Gamma_h)} \le Ch \|g\|_{H^{1/2}(\Gamma)} \|\mu_h\|_{L^2(\Gamma_h)}, \\
		|I_{10}| &\le Ch^{3/2} \|\tilde g\|_{H^2(\Omega_h)} \|\mu_h\|_{L^2(\Gamma_h)} \le Ch^{3/2} \|g\|_{H^{3/2}(\Gamma)} \|\mu_h\|_{L^2(\Gamma_h)},
	\end{align*}
	from which \eref{eq:consistency error}$_3$ follows. This completes the proof of \pref{prop:consistency error}.
\end{proof}

\subsection{Error estimate for velocity and pressure}
We are ready to state the main result of this paper.
\begin{thm} \label{thm:error estimate}
	Assume that $\Gamma\in C^{2,1},\, f\in L^2(\Omega)^N,\, g\in H^{3/2}(\Gamma),\, \tau\in H^{1/2}(\Gamma)^N$ and that $h>0$ is sufficiently small.
	We let $(u, p)$ and $(u_h, p_h)$ be solutions of \eref{eq:weak form with constraint} and \eref{eq:discrete problem with 2 variables} respectively.
	Then there exists a constant $C=C(N,\Omega,\nu,f,g,\tau)$, independent of $h$ and $\epsilon$, such that
	\begin{equation*} \textstyle
		\|\tilde u - u_h\|_{H^1(\Omega_h)} + \|(\tilde p+k_h) - p_h\|_{L^2(\Omega_h)} \le C(\sqrt h + \sqrt\epsilon + \frac{h}{\sqrt\epsilon}),
	\end{equation*}
	where $k_h = \frac1{\mathrm{meas}(\Omega_h)} (p_h - R_h\tilde p, 1)_{\Omega_h}$.
\end{thm}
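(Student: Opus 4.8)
The plan is to combine the consistency error estimate of Proposition~\ref{prop:consistency error} with the discrete stability estimates~\eref{eq:uniform Korn inequality}--\eref{eq:uniform inf-sup condition}, following the standard Strang-type argument adapted to the penalized three-field formulation. First I would fix the additive constant: replace $\tilde p$ by $\tilde p + k_h$ (and correspondingly $\tilde\lambda$ by $\tilde\lambda + k_h$), where $k_h$ is chosen exactly as in the statement so that $(\tilde p + k_h) - p_h$ and $R_h\tilde p - p_h$ have the same mean on $\Omega_h$; since Remark~\ref{rem:additive constant} lets us shift $(p,\lambda)$ by any constant in~\eref{eq:weak form without constraint}, the consistency estimate~\eref{eq:consistency error} continues to hold with $\tilde p$, $\tilde\lambda$ replaced by the shifted versions. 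Write $e_h := \tilde u - u_h \in V_h + (\text{extension})$, $\varepsilon_h^p := (\tilde p + k_h) - p_h$, $\varepsilon_h^\lambda := (\tilde\lambda + k_h) - \lambda_h$. The goal is to bound $\|e_h\|_{H^1(\Omega_h)} + \|\varepsilon_h^p\|_{L^2(\Omega_h)}$ (and, as a byproduct, $\sqrt\epsilon\,\|\varepsilon_h^\lambda\|_{L^2(\Gamma_h)}$).

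Next I would introduce the discrete interpolants: set $w_h := I_h\tilde u \in V_h$, $r_h := R_h\tilde p \in Q_h$ (or $I_h\tilde p$ if one prefers), and $s_h := $ an $L^2(\Gamma_h)$-projection of $\tilde\lambda + k_h$ onto $\Lambda_h$, and split $e_h = (\tilde u - w_h) + (w_h - u_h)$, etc. The interpolation error estimates collected in Section~\ref{subsec:triangulation} control the ``approximation'' pieces $\tilde u - w_h$, $\tilde p + k_h - r_h$, $\tilde\lambda + k_h - s_h$ at the rates $O(h)$ in $H^1(\Omega_h)$, $O(h)$ in $L^2(\Omega_h)$, and $O(h^{1/2})$ in $L^2(\Gamma_h)$ respectively (the last being only $O(h^{1/2})$ because $\lambda \in H^{1/2}(\Gamma)$ only, which is the source of the suboptimal $\sqrt h$). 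It remains to bound the discrete parts $\phi_h := w_h - u_h \in V_h$, $\psi_h := r_h - p_h \in Q_h$, $\chi_h := s_h - \lambda_h \in \Lambda_h$. Testing the discrete system~\eref{eq:discrete problem with 3 variables} and using~\eref{eq:consistency error} with the triangle inequality gives the ``perturbed Galerkin relations''
\begin{align*}
	|a_h(\phi_h, v_h) + b_h(v_h, \psi_h) + c_h(v_h\cdot n_h, \chi_h)| &\le C\big(h + \|\tilde u - w_h\|_{H^1} + \|\tilde p + k_h - r_h\|_{L^2} + \|\tilde\lambda+k_h - s_h\|_{L^2(\Gamma_h)}\big)\|v_h\|_{H^1(\Omega_h)}, \\
	|b_h(\phi_h, q_h) + d_h(\psi_h, q_h)| &\le C\big(h + \|\tilde u - w_h\|_{H^1} + \|\tilde p + k_h - r_h\|_{H^1_h}\big)\|q_h\|_{L^2(\Omega_h)}, \\
	|c_h(\phi_h\cdot n_h, \mu_h) + \epsilon\, c_h(\chi_h, \mu_h)| &\le C\big(h + \|\tilde u - w_h\|_{L^2(\Gamma_h)} + \epsilon\|\tilde\lambda+k_h - s_h\|_{L^2(\Gamma_h)}\big)\|\mu_h\|_{L^2(\Gamma_h)},
\end{align*}
where I have abbreviated the $d_h$-seminorm; denote the bracketed quantities by $\mathcal E_1, \mathcal E_2, \mathcal E_3$, all of which are $O(h^{1/2})$.

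Now I would run the energy argument. Take $v_h = \phi_h$, $q_h = \psi_h$, $\mu_h = \chi_h$; adding the first two relations (with a sign on the second so the $b_h(\phi_h,\psi_h)$ terms cancel) and the third multiplied appropriately, the cross term $c_h(\phi_h\cdot n_h, \chi_h)$ appearing in the first relation is cancelled against $c_h(\phi_h\cdot n_h, \chi_h)$ from the third, leaving on the left $a_h(\phi_h,\phi_h) + d_h(\psi_h,\psi_h) + \epsilon\,c_h(\chi_h,\chi_h) \ge \alpha\|\phi_h\|_{H^1(\Omega_h)}^2 + d_h(\psi_h,\psi_h) + \epsilon\|\chi_h\|_{L^2(\Gamma_h)}^2$ by~\eref{eq:uniform Korn inequality}, while the right side is $\le C(\mathcal E_1\|\phi_h\|_{H^1} + \mathcal E_2\|\psi_h\|_{L^2} + \mathcal E_3\|\chi_h\|_{L^2(\Gamma_h)})$. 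The term $\mathcal E_2\|\psi_h\|_{L^2(\Omega_h)}$ is not yet controlled, so I would next recover $\|\psi_h\|_{L^2(\Omega_h)}$ from the inf-sup condition~\eref{eq:uniform inf-sup condition}: writing $\psi_h = \mathring\psi_h + (\text{mean})$ and noting the mean is handled by the choice of $k_h$ (so $\psi_h$ has zero mean on $\Omega_h$, i.e.\ $\psi_h \in \mathring Q_h$), pick $v_h \in \mathring V_h$ realizing the sup; since such $v_h$ vanishes on $\Gamma_h$, the term $c_h(v_h\cdot n_h, \chi_h)$ drops out of the first perturbed relation, giving $\|\psi_h\|_{L^2} \le C(\mathcal E_1 + \|\phi_h\|_{H^1} + \eta h\|\nabla\psi_h\|_{L^2})$ and the last summand is absorbed by $d_h(\psi_h,\psi_h)^{1/2}$. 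Substituting back and using Young's inequality to absorb $\|\phi_h\|_{H^1}$, $\|\psi_h\|_{L^2}$, $\sqrt\epsilon\|\chi_h\|_{L^2(\Gamma_h)}$ into the left-hand side yields
\[
	\|\phi_h\|_{H^1(\Omega_h)}^2 + \|\psi_h\|_{L^2(\Omega_h)}^2 + \epsilon\|\chi_h\|_{L^2(\Gamma_h)}^2 \le C(\mathcal E_1^2 + \mathcal E_2^2 + \mathcal E_3^2/\epsilon).
\]
Finally, with $\mathcal E_1, \mathcal E_2 = O(h)$ in the volume pieces but $O(h^{1/2})$ once the $\tilde\lambda - s_h$ term in $\mathcal E_1$ is included, and $\mathcal E_3 = O(h) + O(\epsilon\cdot h^{1/2})$ so that $\mathcal E_3/\sqrt\epsilon = O(h/\sqrt\epsilon + \sqrt\epsilon\cdot h^{1/2}) = O(h/\sqrt\epsilon + \sqrt\epsilon)$, adding back the interpolation errors $\|\tilde u - w_h\|_{H^1} = O(h)$ and $\|\tilde p + k_h - r_h\|_{L^2} = O(h)$ gives the claimed bound $C(\sqrt h + \sqrt\epsilon + h/\sqrt\epsilon)$.

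\textbf{Main obstacle.} The delicate point is the bookkeeping of the cross term $c_h(\cdot,\cdot)$ between the velocity equation and the Lagrange-multiplier equation: one must choose the test functions so that it cancels rather than accumulates, and one must confirm that the mean-value adjustment $k_h$ simultaneously makes $\psi_h \in \mathring Q_h$ (so the inf-sup condition applies) and keeps the consistency estimate~\eref{eq:consistency error} valid after the constant shift of $p$ and $\lambda$. A secondary subtlety is tracking which error contributions genuinely carry a factor $1/\sqrt\epsilon$ (only the boundary term $\mathcal E_3$ divided by $\epsilon$ in the $c_h(\chi_h,\chi_h)$ estimate) versus which do not, so as to land exactly on $\sqrt h + \sqrt\epsilon + h/\sqrt\epsilon$ and not a weaker bound; the $O(h^{1/2})$ here is unavoidable because $\lambda = -\sigma(u,p)n\cdot n$ is only $H^{1/2}(\Gamma)$-regular, which is precisely the limitation that Section~\ref{sec:reduced-order integration} addresses by reduced-order quadrature.
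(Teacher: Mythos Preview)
Your overall strategy---split into interpolation error plus discrete error, run an energy argument using \eref{eq:uniform Korn inequality}, and recover the pressure via the inf-sup condition \eref{eq:uniform inf-sup condition} with $v_h\in\mathring V_h$ so the $c_h$-term drops---is exactly the paper's. You also correctly trace the $\sqrt h$ to the boundary interpolation error $\|\tilde\lambda-s_h\|_{L^2(\Gamma_h)}=O(h^{1/2})$, which in the paper's organisation is the term $I_{43}$.

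There is, however, a genuine sign bookkeeping gap in your ``perturbed Galerkin relations'' (ii) and (iii). As written, with $+\,d_h(\psi_h,q_h)$ and $+\,\epsilon\,c_h(\chi_h,\mu_h)$, the residuals are \emph{not} bounded by the quantities $\mathcal E_2,\mathcal E_3$ you list: expanding and invoking \eref{eq:consistency error}$_2$ and \eref{eq:consistency error}$_3$ leaves uncontrolled terms $-2\,d_h(p_h,q_h)$ and $-2\epsilon\,c_h(\lambda_h,\mu_h)$, which involve the unknowns $p_h,\lambda_h$ and cannot be absorbed. The fix is to flip the signs, i.e.\ to work with $b_h(\phi_h,q_h)-d_h(\psi_h,q_h)$ and $c_h(\phi_h\cdot n_h,\mu_h)-\epsilon\,c_h(\chi_h,\mu_h)$ (equivalently, to \emph{subtract} relations (ii) and (iii) in the energy combination rather than add them); then the residuals become $b_h(w_h-\tilde u,q_h)-d_h(r_h,q_h)+O(h)\|q_h\|$ and $c_h((w_h-\tilde u)\cdot n_h,\mu_h)-\epsilon\,c_h(s_h,\mu_h)+O(h)\|\mu_h\|$, which \emph{are} controlled. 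Note in particular that the correct $\mathcal E_3$ carries $\epsilon\|s_h\|_{L^2(\Gamma_h)}=O(\epsilon)$, not $\epsilon\|\tilde\lambda-s_h\|_{L^2(\Gamma_h)}$; this is what yields $\mathcal E_3/\sqrt\epsilon=O(h/\sqrt\epsilon+\sqrt\epsilon)$ and hence the stated bound. The paper sidesteps this sign trap by starting directly from the coercive combination $a_h(\phi_h,\phi_h)+d_h(\psi_h,\psi_h)+\epsilon\,c_h(\chi_h,\chi_h)$ and inserting $\tilde u,\tilde p,\tilde\lambda$; your route is equivalent once the signs are set correctly.
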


\begin{rem} \label{rem:adjustment of additive constant}
	As mentioned in \rref{rem:additive constant}, there is room for us to choose arbitrary additive constant $k$ for $p$.
	Here, we adjust $k$ in such a way that $R_h(\tilde p+k) - p_h \in L^2_0(\Omega_h)$, which gives rise to the constant $k_h$ above.
	By considering $(p+k_h, \lambda + k_h)$ instead of $(p, \lambda)$, we may assume $k_h = 0$ in the following proof.
\end{rem}

\begin{proof}[Proof of \tref{thm:error estimate}]
	Let $v_h := I_h\tilde u,\, q_h := R_h\tilde p,\, \mu_h := R_h\tilde\lambda$.
	By interpolation error estimates one has
	\begin{align*}
		\|\tilde u - u_h\|_{H^1(\Omega_h)} &\le C\|u\|_{H^2(\Omega)}h + \|v_h - u_h\|_{H^1(\Omega_h)}, \\
		\|\tilde p - p_h\|_{L^2(\Omega_h)} &\le C\|p\|_{H^1(\Omega)}h + \|q_h - p_h\|_{L^2(\Omega_h)},
	\end{align*}
	hence it suffices to bound $\|v_h - u_h\|_{H^1(\Omega_h)}$ and $\|q_h - p_h\|_{H^1(\Omega_h)}$ by $C(h^{1/2} + \epsilon^{1/2} + h/\epsilon^{1/2})$.
	According to the uniform ellipticity \eref{eq:uniform Korn inequality} we obtain
	\begin{align}
		       &\alpha\|v_h - u_h\|_{H^1(\Omega_h)}^2 + d_h(p_h - q_h, p_h - q_h) + \epsilon c_h(\lambda_h - \mu_h, \lambda_h - \mu_h) \notag \\
		\le\; &a_h(v_h - u_h, v_h - u_h) + d_h(p_h - q_h, p_h - q_h) + \epsilon c_h(\lambda_h - \mu_h, \lambda_h - \mu_h) \notag \\
		=\;   &a_h(v_h - \tilde u, v_h - u_h) \notag \\
		       &+ a_h(\tilde u - u_h, v_h - u_h) + b_h(v_h - u_h, \tilde p - p_h) + c_h((v_h - u_h)\cdot n_h, \tilde\lambda - \lambda_h) \notag \\
		       &-  b_h(v_h - u_h, \tilde p - p_h) + d_h(p_h - q_h, p_h - q_h) \notag \\
		       &-  c_h((v_h - u_h)\cdot n_h, \tilde\lambda - \lambda_h) + \epsilon c_h(\lambda_h - \mu_h, \lambda_h - \mu_h) \notag \\
		=:    &\,I_1 + I_2 + I_3 + I_4. \label{eq:start of error estimate}
	\end{align}
	From interpolation error estimates and \pref{prop:consistency error} it follows that
	\begin{equation} \label{eq:error 1}
		|I_1| + |I_2| \le \frac\alpha8 \|v_h - u_h\|_{H^1(\Omega_h)}^2 + Ch^2.
	\end{equation}
	
	For $I_3$, we observe that
	\begin{align*}
		I_3 &= b_h(v_h - \tilde u + \tilde u - u_h, p_h - q_h + q_h - \tilde p) + d_h(p_h, p_h - q_h) - d_h(q_h, p_h - q_h) \\
		      &= b_h(v_h - \tilde u, q_h - \tilde p) + b_h(v_h - \tilde u, p_h - q_h) + b_h(\tilde u - u_h, q_h - \tilde p) \\
		      &\qquad + b_h(\tilde u - u_h, p_h - q_h) + d_h(p_h, p_h - q_h) \\
		      &\qquad - d_h(q_h, p_h - q_h) =: I_{31} + I_{32} + I_{33} + I_{34} + I_{35}.
	\end{align*}
	The first three terms are estimated as
	\begin{equation*}
		|I_{31}| \le Ch^2, \quad |I_{32}| \le Ch\|p_h - q_h\|_{L^2(\Omega_h)}, \quad |I_{33}| \le Ch^2 + \frac\alpha8 \|v_h - u_h\|_{H^1(\Omega_h)}^2,
	\end{equation*}
	whereas we know that $|I_{34}| \le Ch\|p_h - q_h\|_{L^2(\Omega_h)}$ by \pref{prop:consistency error}.
	$I_{35}$ is bounded, thanks to H\"older's inequality, by
	\begin{equation*}
		|I_{35}| \le \eta h^2 \|\nabla q_h\|_{L^2(\Omega_h)}^2 + \frac1{4} d_h(p_h - q_h, p_h - q_h) \le Ch^2 + \frac1{4} d_h(p_h - q_h, p_h - q_h).
	\end{equation*}
	
	To obtain further estimates of $I_{32}$ and $I_{34}$, we observe from \eref{eq:uniform inf-sup condition} that
	\begin{align*}
		     &C\|p_h - q_h\|_{L^2(\Omega_h)} \le \sup_{v_h\in\mathring V_h} \frac{b_h(v_h, p_h - q_h)}{\|v_h\|_{H^1(\Omega_h)}} + C\eta h \|\nabla(p_h - q_h)\|_{L^2(\Omega_h)} \\
		\le & \sup_{v_h\in\mathring V_h} \frac{b_h(v_h, p_h - \tilde p)}{\|v_h\|_{H^1(\Omega_h)}} + \sup_{v_h\in\mathring V_h} \frac{b_h(v_h, \tilde p - q_h)}{\|v_h\|_{H^1(\Omega_h)}} + C\eta h \|\nabla(p_h - q_h)\|_{L^2(\Omega_h)}.
	\end{align*}
	Here, the second term on the right-hand side is bounded by $Ch \|p\|_{H^1(\Omega)} = Ch$.
	We claim that the first term is bounded by $Ch + C\|v_h - u_h\|_{H^1(\Omega_h)}$.
	In fact, it follows from \eref{eq:consistency error}$_1$, in which $v_h$ is restricted to $\mathring V_h$ (hence $v_h = 0$ on $\Gamma_h$ so that $c_h(v_h\cdot n_h, \cdot) = 0$), that
	\begin{equation*}
		\sup_{v_h\in\mathring V_h} \frac{|a_h(\tilde u - u_h, v_h) + b_h(v_h, \tilde p - p_h)|}{\|v_h\|_{H^1(\Omega_h)}} \le Ch.
	\end{equation*}
	This combined with $\sup\limits_{v_h\in V_h} \frac{|a_h(\tilde u - u_h, v_h)|}{\|v_h\|_{H^1(\Omega_h)}} \le C\|\tilde u - u_h\|_{H^1(\Omega_h)} \le Ch + C\|v_h - u_h\|_{H^1(\Omega_h)}$ proves the claim.
	Consequently, we have
	\begin{equation}
		C\|p_h - q_h\|_{L^2(\Omega_h)} \le Ch + C\|v_h - u_h\|_{H^1(\Omega_h)} + C\eta h \|\nabla(p_h - q_h)\|_{L^2(\Omega_h)}. \label{eq:qh - ph}
	\end{equation}
	Collecting the above estimates for $I_{31}, \dots, I_{35}$ and noting that $\eta h^2 \|\nabla(p_h - q_h)\|_{L^2(\Omega_h)}^2 = d_h(p_h - q_h, p_h - q_h)$, we deduce
	\begin{equation} \label{eq:error 2}
		|I_3| \le Ch^2 + \frac\alpha4 \|v_h - u_h\|_{H^1(\Omega_h)}^2 + \frac12 d_h(p_h - q_h, p_h - q_h).
	\end{equation}
	
	In the same way as we computed $I_3$, one has
	\begin{align*}
		I_4 &= c_h((v_h - \tilde u)\cdot n_h, \mu_h - \tilde\lambda) + c_h((v_h - \tilde u)\cdot n_h, \lambda_h - \mu_h) \\
		      &\hspace{4.15cm} + c_h((\tilde u - u_h)\cdot n_h, \mu_h - \tilde\lambda) \\
		      &\quad +c_h((\tilde u - u_h)\cdot n_h, \lambda_h - \mu_h) + \epsilon c_h(\lambda_h, \lambda_h - \mu_h) \\
		      &\quad -\epsilon c_h(\mu_h, \lambda_h - \mu_h) =: I_{41} + I_{42} + I_{43} + I_{44} + I_{45}.
	\end{align*}
	By interpolation error estimates on $\Gamma_h$, we have
	\begin{align*}
		|I_{41}| &\le Ch^2, \quad |I_{42}| \le Ch^{3/2}\|\lambda_h - \mu_h\|_{L^2(\Gamma_h)}, \quad |I_{43}| \le Ch^{1/2}\|\tilde u - u_h\|_{H^1(\Omega_h)}, \\
		|I_{45}| &\le C\epsilon \|\lambda_h - \mu_h\|_{L^2(\Gamma_h)},
	\end{align*}
	whereas it follows from \eref{eq:consistency error}$_3$ that
	\begin{equation*}
		|I_{44}| \le Ch\|\lambda_h - \mu_h\|_{L^2(\Gamma_h)}.
	\end{equation*}
	Noting that $\|\lambda_h - \mu_h\|_{L^2(\Gamma_h)}^2 = c_h(\lambda_h - \mu_h, \lambda_h - \mu_h)$ and using Young's inequality, we arrive at
	\begin{align}
		|I_4| &\le Ch^2 + Ch^3/\epsilon + Ch + C\epsilon + Ch^2/\epsilon \notag \\
		        &\hspace{2.9cm} + \frac\alpha8 \|v_h - u_h\|_{H^1(\Omega_h)}^2 + \frac\epsilon2 c_h(\lambda_h - \mu_h, \lambda_h - \mu_h) \notag \\
			&\le C(h + \epsilon + h^2/\epsilon) + \frac\alpha8 \|v_h - u_h\|_{H^1(\Omega_h)}^2 + \frac\epsilon2 c_h(\lambda_h - \mu_h, \lambda_h - \mu_h). \label{eq:error 3}
	\end{align}
	
	Combining \eref{eq:error 1}, \eref{eq:error 2}, and \eref{eq:error 3} with \eref{eq:start of error estimate}, we conclude the desired estimate for $\|v_h - u_h\|_{H^1(\Omega_h)}$.
	The result for $\|q_h - p_h\|_{L^2(\Omega_h)}$ follows from \eref{eq:qh - ph}.
	This completes the proof of \tref{thm:error estimate}.
\end{proof}

\begin{rem}
	According to the theorem, the best rate of convergence is $O(h^{1/2})$ obtained by choosing $\epsilon = O(h)$, which is not optimal.
	Let us highlight the reasons for this sub-optimality.
	First, as far as the variational principle is concerned, the most suitable regularity to work with for $\lambda_h$ would be $H^{-1/2}(\Gamma_h)$, instead of $L^2(\Gamma_h)$ as presented above.
	However, it is not possible to extract this regularity from $I_{44}$ above (more precisely, $I_7$ in the proof of \pref{prop:consistency error}) because $n_h\notin H^{1/2}(\Gamma_h)$.
	Second, it is not trivial whether the following inf-sup condition would hold:
	\begin{equation} \label{eq:inf-sup condition for lambdah}
		C \|\mu_h\|_{H^{-1/2}(\Gamma_h)} \le \sup_{v_h\in V_h} \frac{c_h(v_h\cdot n_h, \mu_h)}{\|v_h\|_{H^1(\Omega_h)}} \qquad \forall \mu_h\in \Lambda_h.
	\end{equation}
	In the case $\Omega_h = \Omega$, this condition is valid for a suitable choice of $\Lambda_h$.
	\c{C}a\u{g}lar and Liakos \cite{Cag04,CaLi09} took advantage of this fact to derive the optimal rate of convergence $O(h + \epsilon)$.
\end{rem}

\section{Penalty FE scheme with reduced-order numerical integration} \label{sec:reduced-order integration}
In this section, we investigate problem \eref{eq:discrete problem with 2 variables} in which $c_h$ is replaced with its reduced-order numerical integration $c_h^1$ defined via the midpoint (barycenter) formula as follows:
\begin{equation*}
	c_h^1(\lambda, \mu) := \sum_{S\in\mathcal S_h} |S| \lambda(m_S) \mu(m_S), \quad \lambda,\mu\in C(\Gamma_h),
\end{equation*}
where $|S|$ denotes the area of $S$ and $m_S$ is the midpoint of $S$ when $N=2$ (resp., the barycenter of $S$ when $N=3$).
Because we exploit pointwise evaluation of functions, we assume higher regularity of the exact solutions as follows:
\begin{equation*}
	u\in W^{2,\infty}(\Omega)^N, \quad p\in W^{1,\infty}(\Omega), \quad \lambda\in W^{1,\infty}(\Gamma),
\end{equation*}
which implies $f\in L^\infty(\Omega)^N,\, g\in W^{2,\infty}(\Gamma),\, \tau\in W^{1,\infty}(\Gamma)^N$.

Then the problem we propose reads: find $(u_h, p_h)\in V_h\times Q_h$ such that
\begin{equation} \label{eq:discrete problem with reduced-order quadrature}
	\left\{
	\begin{aligned}
		a_h(u_h, v_h) + b_h(v_h, p_h) + \frac1\epsilon c_h^1(u_h\cdot n_h - \tilde g, v_h\cdot n_h) &= (\tilde f, v_h)_{\Omega_h} + (\tilde\tau, v_h)_{\Gamma_h} \\
		b_h(u_h, q_h) &= d_h(p_h, q_h)
	\end{aligned}
	\right.
\end{equation}
for all $(v_h, q_h)\in V_h\times Q_h$.
The well-posedness of this problem is obtained by the same manner as in \pref{prop:discrete well-posedness}.
We also find that its solution satisfies the following three-variable formulation as we derived \eref{eq:discrete problem with 3 variables}:
\begin{equation*}
	\left\{
	\begin{aligned}
		a_h(u_h, v_h) + b_h(v_h, p_h) +c_h^1(v_h\cdot n_h, \lambda_h) &= (\tilde f, v_h)_{\Omega_h} + (\tilde\tau, v_h)_{\Gamma_h} & \forall v_h\in V_h, \\
		b_h(u_h, q_h) &= d_h(p_h, q_h) & \forall q_h\in Q_h, \\
		c_h^1(u_h\cdot n_h - \tilde g, \mu) &= \epsilon c_h^1(\lambda_h, \mu) &\hspace{-3mm} \forall \mu\in C(\Gamma_h),
	\end{aligned}
	\right.
\end{equation*}
where $\lambda_h$ is defined only on $\{m_S \,:\, S\in\mathcal S_h\}$ by $\lambda_h(m_S) = \frac1\epsilon (u_h\cdot n_h - \tilde g)|_{m_S}$.
Likewise, the error analysis is mostly parallel to the arguments in \tref{thm:error estimate}.
Thereby, in the sequel we only focus on what will change due to the replacement of $c_h$ by $c_h^1$.
In doing so, first we observe that:

\begin{lem}
	Let $v_h\in V_h$ and $\tilde\lambda\in W^{1,\infty}(\tilde\Omega)$. Then
	\begin{equation*}
		|c_h(v_h\cdot n_h, \tilde\lambda) - c_h^1(v_h\cdot n_h, \tilde\lambda)| \le h\|v_h\|_{L^1(\Gamma_h)}\|\tilde\lambda\|_{W^{1,\infty}(\tilde\Omega)}.
	\end{equation*}
\end{lem}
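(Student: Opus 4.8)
The plan is to reduce everything to a face-by-face estimate over the boundary mesh $\mathcal S_h$, since $c_h$ and $c_h^1$ differ only in that the exact integral $\int_S(v_h\cdot n_h)\,\tilde\lambda\,d\gamma_h$ is replaced by the one-point quadrature $|S|\,(v_h\cdot n_h)(m_S)\,\tilde\lambda(m_S)$. The first thing I would record is that, for each $S\in\mathcal S_h$, the function $w:=(v_h\cdot n_h)|_S$ is \emph{affine} on $S$: the outward normal $n_h$ is constant on a flat face $S$, and $v_h|_S$ is affine because the bubble function attached to the element containing $S$ vanishes identically on $S$, so that $v_h|_S\in P_1(S)^N$ in the P1b case as well as the P1 case. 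Hence $w\in P_1(S)$.

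Next I would invoke the exactness of the quadrature rule on affine functions, namely $\int_S w\,d\gamma_h=|S|\,w(m_S)$ for all $w\in P_1(S)$ --- this is the midpoint rule on a segment when $N=2$ and the barycenter rule on a triangle when $N=3$, both of which integrate linear functions exactly. Using this, on each $S$ I would write the local quadrature error as
\[
\int_S (v_h\cdot n_h)\,\tilde\lambda\,d\gamma_h-|S|\,(v_h\cdot n_h)(m_S)\,\tilde\lambda(m_S)=\int_S w\,\bigl(\tilde\lambda-\tilde\lambda(m_S)\bigr)\,d\gamma_h+\tilde\lambda(m_S)\Bigl(\int_S w\,d\gamma_h-|S|\,w(m_S)\Bigr),
\]
and discard the second term, which is zero by the exactness just stated.

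It then remains to bound $\int_S w\,(\tilde\lambda-\tilde\lambda(m_S))\,d\gamma_h$. Because the straight segment from any $x\in S$ to $m_S$ stays inside the convex face $S\subset\overline{\Omega_h}\subset\tilde\Omega$, integrating $\nabla\tilde\lambda$ along it gives $|\tilde\lambda(x)-\tilde\lambda(m_S)|\le\|\nabla\tilde\lambda\|_{L^\infty(\tilde\Omega)}\,|x-m_S|\le h\,\|\tilde\lambda\|_{W^{1,\infty}(\tilde\Omega)}$, where $\mathrm{diam}\,S\le h$ follows from (H1). Together with the pointwise bound $|v_h\cdot n_h|\le|v_h|$ (since $|n_h|=1$), this yields the per-face estimate $h\,\|\tilde\lambda\|_{W^{1,\infty}(\tilde\Omega)}\,\|v_h\|_{L^1(S)}$, and summing over $S\in\mathcal S_h$ with $\Gamma_h=\bigcup_S S$ gives the assertion with no extra constant. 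The only step requiring genuine attention --- and the closest thing to an obstacle --- is justifying that $v_h$ is affine on boundary faces in the P1b case, which hinges on the bubble vanishing on $\partial T$; everything else is the standard principle that a quadrature exact on a polynomial class only ``sees'' the non-polynomial factor, combined with a Lipschitz estimate for $\tilde\lambda$.
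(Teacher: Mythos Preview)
Your proof is correct and follows essentially the same approach as the paper: use the exactness of the midpoint/barycenter rule on affine functions to rewrite the quadrature error on each face as $\int_S (v_h\cdot n_h)(\tilde\lambda-\tilde\lambda(m_S))\,d\gamma_h$, then bound the oscillation of $\tilde\lambda$ by $h\|\tilde\lambda\|_{W^{1,\infty}(\tilde\Omega)}$ via the Lipschitz estimate. You supply a few details the paper leaves implicit (notably the observation that the bubble vanishes on $\partial T$, so $v_h|_S$ is affine also in the P1b case), but the argument is the same.
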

\begin{proof}
	Since the midpoint (barycenter) formula is exact for affine functions, one obtains
	\begin{equation*}
		c_h(v_h\cdot n_h, \tilde\lambda) - c_h^1(v_h\cdot n_h, \tilde\lambda) = \sum_{S\in\mathcal S_h} \int_S v_h\cdot n_h(\tilde\lambda - \tilde\lambda(m_S))\,d\gamma_h.
	\end{equation*}
	This combined with $\|\tilde\lambda - \tilde\lambda(m_S)\|_{L^\infty(S)} \le \|\tilde\lambda\|_{W^{1,\infty}(\tilde\Omega)}h$
	(note that $\mathrm{diam}\,S \le h$) concludes the desired result.
\end{proof}
Combining this lemma with the estimates of $I_5$ in the proof of \pref{prop:consistency error}, we see that \eref{eq:consistency error}$_1$ remains the same even if we replace $c_h$ by $c_h^1$.

Next we consider the analysis of \eref{eq:consistency error}$_3$, namely, the estimates for $I_7$--$I_{10}$ in the proof of \pref{prop:consistency error}.
To this end we introduce a semi-norm in $\Lambda_h$ by 
\begin{equation*}
	|\mu_h|_{\Lambda_h} := c_h^1(\mu_h, \mu_h)^{1/2}.
\end{equation*}
By \lref{lem:n and nh} and Cauchy-Schwarz inequality, $I_7$ is bounded by $Ch^2 |\mu_h|_{\Lambda_h}$ if $N=2$ and by $Ch |\mu_h|_{\Lambda_h}$ if $N=3$.
By the regularity assumption $\tilde u, \tilde g\in W^{1,\infty}(\tilde\Omega)$ and by \pref{prop:estimate of Phi}, we have $|I_8|+|I_9| \le Ch^2 |\mu_h|_{\Lambda_h}$.
We notice that $I_{10}$ in the present situation is zero. Therefore, instead of \eref{eq:consistency error}$_3$ we obtain
\begin{equation} \label{eq:consistency error for ch1}
	|c_h^1((\tilde u - u_h)\cdot n_h, \mu_h) + \epsilon c_h^1(\lambda_h, \mu_h)| \le Ch^j |\mu_h|_{\Lambda_h} \qquad \forall \mu_h\in\Lambda_h,
\end{equation}
where $j=2$ if $N=2$ and $j=1$ if $N=3$.

Finally, we consider the estimates of $I_4$ in the proof of \tref{thm:error estimate}.
This time we may choose $\mu_h = I_h\tilde\lambda$ as an interpolation of $\tilde\lambda$.
In view of the regularity assumption $\tilde u\in W^{2,\infty}(\tilde\Omega)$ and $\tilde\lambda\in W^{1,\infty}(\tilde\Omega)$ and by virtue of \lref{lem:n and nh}, we have
\begin{align*}
	|I_{41}| &\le Ch^3, \quad |I_{42}| \le Ch^2|\lambda_h - \mu_h|_{\Lambda_h}, \quad |I_{43}| \le Ch\|\tilde u - u_h\|_{H^1(\Omega_h)}, \\
	|I_{45}| &\le C\epsilon |\lambda_h - \mu_h|_{\Lambda_h}.
\end{align*}
By \eref{eq:consistency error for ch1}, $I_{44} \le Ch^j |\lambda_h - \mu_h|_{\Lambda_h}$.
Consequently, instead of \eref{eq:error 3} we obtain
\begin{equation*}
	|I_4| \le C(h^2 + \epsilon + h^{2j}/\epsilon) + \frac\alpha8 \|v_h - u_h\|_{H^1(\Omega_h)}^2 + \frac\epsilon2 c_h^1(\lambda_h - \mu_h, \lambda_h - \mu_h).
\end{equation*}
From these observations, we arrive at the following result.

\begin{thm}
	In addition to the hypotheses of \tref{thm:error estimate} we assume that the solution $(u, p)$ of \eref{eq:weak form with constraint} possesses the $W^{2,\infty}(\Omega)^N\times W^{1,\infty}(\Omega)$-regularity.
	Let $(u_h, p_h)$ be the solution of \eref{eq:discrete problem with reduced-order quadrature}.
	Then there exists a constant $C=C(N,\Omega,\nu,u,p)$, independent of $h$ and $\epsilon$, such that
	\begin{equation} \label{eq:error bound for reduced scheme}
		\textstyle
		\|\tilde u - u_h\|_{H^1(\Omega_h)} + \|(\tilde p+k_h) - p_h\|_{L^2(\Omega_h)} \le C(h + \sqrt\epsilon + \frac{h^j}{\sqrt\epsilon}),
	\end{equation}
	where $j=2$ if $N=2$ and $j=1$ if $N=3$.
\end{thm}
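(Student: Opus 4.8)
The plan is to re-run the proof of \tref{thm:error estimate} almost verbatim, isolating the single place where the midpoint (barycenter) rule $c_h^1$ behaves better than $c_h$. As there, I would set $v_h := I_h\tilde u$ and $q_h := R_h\tilde p$; the one change is that, thanks to the enhanced regularity $\lambda\in W^{1,\infty}(\Gamma)$ (so $\tilde\lambda\in W^{1,\infty}(\tilde\Omega)$ is continuous and pointwise evaluation at $m_S$ is legitimate), I would take $\mu_h := I_h\tilde\lambda$, the Lagrange interpolant, in place of $R_h\tilde\lambda$. By the interpolation error estimates it again suffices to bound $\|v_h-u_h\|_{H^1(\Omega_h)}$ and $\|q_h-p_h\|_{L^2(\Omega_h)}$ by $C(h+\sqrt\epsilon+h^j/\sqrt\epsilon)$. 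I would first note that the well-posedness of \eref{eq:discrete problem with reduced-order quadrature} follows exactly as in \pref{prop:discrete well-posedness}: testing with $(u_h,p_h)$ forces $a_h(u_h,u_h)=0$, hence $u_h=0$, by \eref{eq:uniform Korn inequality} regardless of the boundary term, since $c_h^1(\mu,\mu)=\sum_{S\in\mathcal S_h}|S|\mu(m_S)^2\ge0$; the pressure part is handled verbatim.

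Next, applying \eref{eq:uniform Korn inequality} and repeating the algebraic identity \eref{eq:start of error estimate} with $c_h$ replaced throughout by $c_h^1$, I obtain $\alpha\|v_h-u_h\|_{H^1(\Omega_h)}^2 + d_h(p_h-q_h,p_h-q_h) + \epsilon c_h^1(\lambda_h-\mu_h,\lambda_h-\mu_h) \le I_1+I_2+I_3+I_4$ with $I_1,\dots,I_4$ defined as in the proof of \tref{thm:error estimate}. The bounds $|I_1|+|I_2|\le\frac\alpha8\|v_h-u_h\|_{H^1(\Omega_h)}^2+Ch^2$ and $|I_3|\le Ch^2+\frac\alpha4\|v_h-u_h\|_{H^1(\Omega_h)}^2+\frac12 d_h(p_h-q_h,p_h-q_h)$ carry over unchanged: $I_3$ involves only $b_h$, $d_h$, \pref{prop:consistency error} and \eref{eq:uniform inf-sup condition}, none of which sees the boundary term, while in $I_1,I_2$ the only boundary contribution enters through \eref{eq:consistency error}$_1$, which — as already observed via the lemma comparing $c_h$ with $c_h^1$ together with the $I_5$-estimates of \pref{prop:consistency error} — remains valid for $c_h^1$. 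Likewise \eref{eq:qh - ph}, which yields the pressure bound, was derived from \eref{eq:consistency error}$_1$ restricted to $\mathring V_h$, where $v_h$ (and hence $v_h(m_S)$) vanishes on $\Gamma_h$, so it too is unaffected by the replacement.

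The only genuinely new input is $I_4$, and here I would simply invoke the estimates established immediately above: with $\mu_h=I_h\tilde\lambda$ and by \lref{lem:n and nh} one has $|I_{41}|\le Ch^3$, $|I_{42}|\le Ch^2|\lambda_h-\mu_h|_{\Lambda_h}$, $|I_{43}|\le Ch\|\tilde u-u_h\|_{H^1(\Omega_h)}$, $|I_{45}|\le C\epsilon|\lambda_h-\mu_h|_{\Lambda_h}$, and $|I_{44}|\le Ch^j|\lambda_h-\mu_h|_{\Lambda_h}$ by \eref{eq:consistency error for ch1}, with $j=2$ if $N=2$ and $j=1$ if $N=3$. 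Since $|\lambda_h-\mu_h|_{\Lambda_h}^2=c_h^1(\lambda_h-\mu_h,\lambda_h-\mu_h)$, Young's inequality (and $\|\tilde u-u_h\|_{H^1(\Omega_h)}\le Ch+C\|v_h-u_h\|_{H^1(\Omega_h)}$ for the $I_{43}$ term) gives $|I_4|\le C(h^2+\epsilon+h^{2j}/\epsilon)+\frac\alpha8\|v_h-u_h\|_{H^1(\Omega_h)}^2+\frac\epsilon2 c_h^1(\lambda_h-\mu_h,\lambda_h-\mu_h)$, as recorded above. Collecting the three bounds and absorbing $\frac{3\alpha}{8}\|v_h-u_h\|_{H^1(\Omega_h)}^2$, $\frac12 d_h(p_h-q_h,p_h-q_h)$ and $\frac\epsilon2 c_h^1(\lambda_h-\mu_h,\lambda_h-\mu_h)$ into the left-hand side produces $\|v_h-u_h\|_{H^1(\Omega_h)}^2\le C(h^2+\epsilon+h^{2j}/\epsilon)$, whence $\|v_h-u_h\|_{H^1(\Omega_h)}\le C(h+\sqrt\epsilon+h^j/\sqrt\epsilon)$ via $\sqrt{a+b+c}\le\sqrt a+\sqrt b+\sqrt c$; the pressure estimate then follows from \eref{eq:qh - ph} and the triangle inequality.

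I do not expect a deep obstacle here, since all the supporting lemmas are already in place; the points needing care are minor. First, $|\cdot|_{\Lambda_h}$ is only a semi-norm (it may vanish on nonzero elements of $\Lambda_h$), so the argument must never rely on a lower bound for $\|\lambda_h-\mu_h\|$ — and it does not, because $\lambda_h$ enters only through terms bounded from above and then absorbed. Second, one must check that replacing $R_h\tilde\lambda$ by $I_h\tilde\lambda$ is compatible with the pointwise-evaluation bounds for $I_{41},\dots,I_{45}$, which is exactly why the $W^{2,\infty}\times W^{1,\infty}$-regularity is imposed. The single structural fact borrowed from the preamble is that the midpoint/barycenter quadrature point ``sees'' the normal discrepancy $n_h-n\circ\pi$ only at order $h^2$ when $N=2$ but still at order $h$ when $N=3$ (contained in \lref{lem:n and nh}), which is the sole origin of the dimension-dependent exponent $j$ and hence of the improvement over \tref{thm:error estimate} in two dimensions.
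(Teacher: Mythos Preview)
Your proposal is correct and follows the paper's own argument essentially step for step: the paper likewise re-runs the proof of \tref{thm:error estimate}, observing that \eref{eq:consistency error}$_1$ and the $I_1,I_2,I_3$ bounds survive the replacement $c_h\to c_h^1$ (via the quadrature-comparison lemma), choosing $\mu_h=I_h\tilde\lambda$, and then invoking exactly the $I_{41}$--$I_{45}$ estimates and \eref{eq:consistency error for ch1} that you cite to obtain the improved bound on $I_4$. Your remarks about $|\cdot|_{\Lambda_h}$ being only a semi-norm and about \lref{lem:n and nh} supplying the dimension-dependent exponent $j$ are also in line with the paper's treatment.
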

\begin{rem}
	According to the theorem, choosing $\epsilon = O(h^2)$ gives us the optimal rate of convergence $O(h)$ when $N=2$.
	When $N=3$, at least we see that introduction of reduced-order numerical integration does not deteriorate the rate of convergence.
	Our numerical example given in the next section shows that it does improve the accuracy for $N=3$ as well.
\end{rem}

\section{Numerical examples} \label{sec:numerical examples}
In the sequel, we refer to the schemes \eref{eq:discrete problem with 2 variables} and \eref{eq:discrete problem with reduced-order quadrature}, i.e.\ without and with reduced-order numerical integration, as ``non-reduced'' and ``reduced'', respectively.
\subsection{Two-dimensional test}
In this example, all the computations are done with the use of \verb#FreeFem++# \cite{freefem} choosing the P1/P1 element, i.e.\ $l=1$, together with $\eta=0.01$.
Let $\Omega$ be the unit disk, namely, $\Omega = \{ (x,y)\in\mathbb R^2 \,:\, x^2+y^2<1 \}$.
We consider the slip boundary value problem \eref{eq:Stokes slip BC} for $\nu = 1$ and for $f,g,\tau$ given by
\begin{align*}
	f &= \begin{pmatrix} -y(x^2+y^2)+16y \\ x(x^2+y^2) \end{pmatrix}, \quad g=0, \\
	\tau &= \begin{pmatrix} 1-x^2 & -xy \\ -xy & 1-y^2 \end{pmatrix} \begin{pmatrix} -12xy & 2(x^2-y^2) \\ 2(x^2-y^2) & -4xy \end{pmatrix} \begin{pmatrix}x\\y\end{pmatrix},
\end{align*}
in which case we have the analytical solution $u=( -y(x^2+y^2), x(x^2+y^2) )^T,\, p=8xy$.
We also introduce the solution of the no-slip boundary value problem denoted by $(u^{\textrm{no-slip}}, p^{\textrm{no-slip}})$.
Namely, it is determined according to the same $f$ as above and to the boundary condition $u^{\textrm{no-slip}} = 0$ on $\Gamma$.
\fref{fig:velocity profile of slip and no-slip} shows the velocity profiles of the two solutions; one notices the clear difference in their circulating directions and in the maximum modulus of velocity.

\begin{figure}[htbp]
	\centering
	\includegraphics[width=7.7cm]{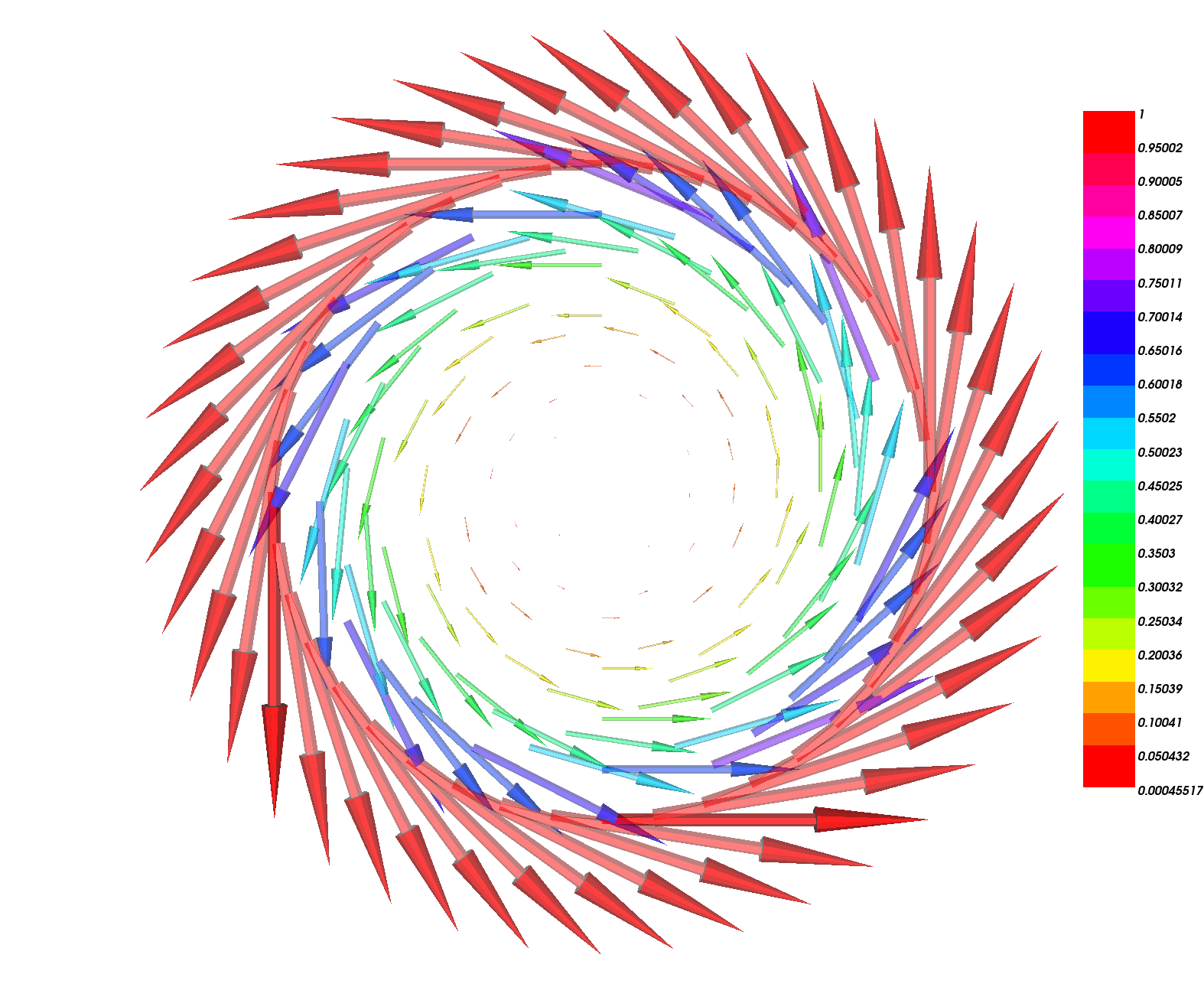}
	\includegraphics[width=7.7cm]{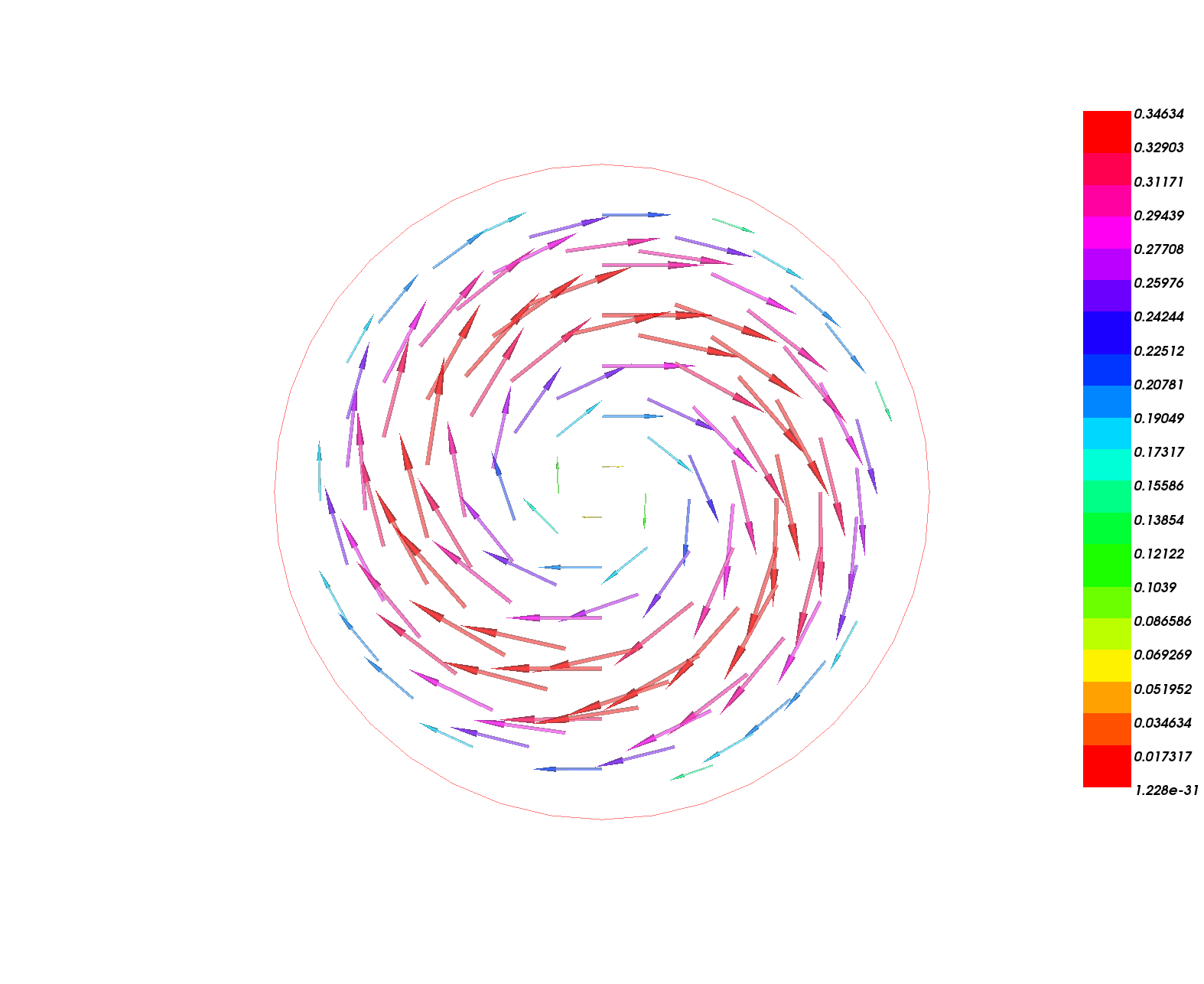}
	\caption{Velocity profiles of $u$ (left) and $u^{\textrm{no-slip}}$ (right).}
	\label{fig:velocity profile of slip and no-slip}
\end{figure}
\begin{figure}[htbp]
	\centering
	\includegraphics[width=7.7cm]{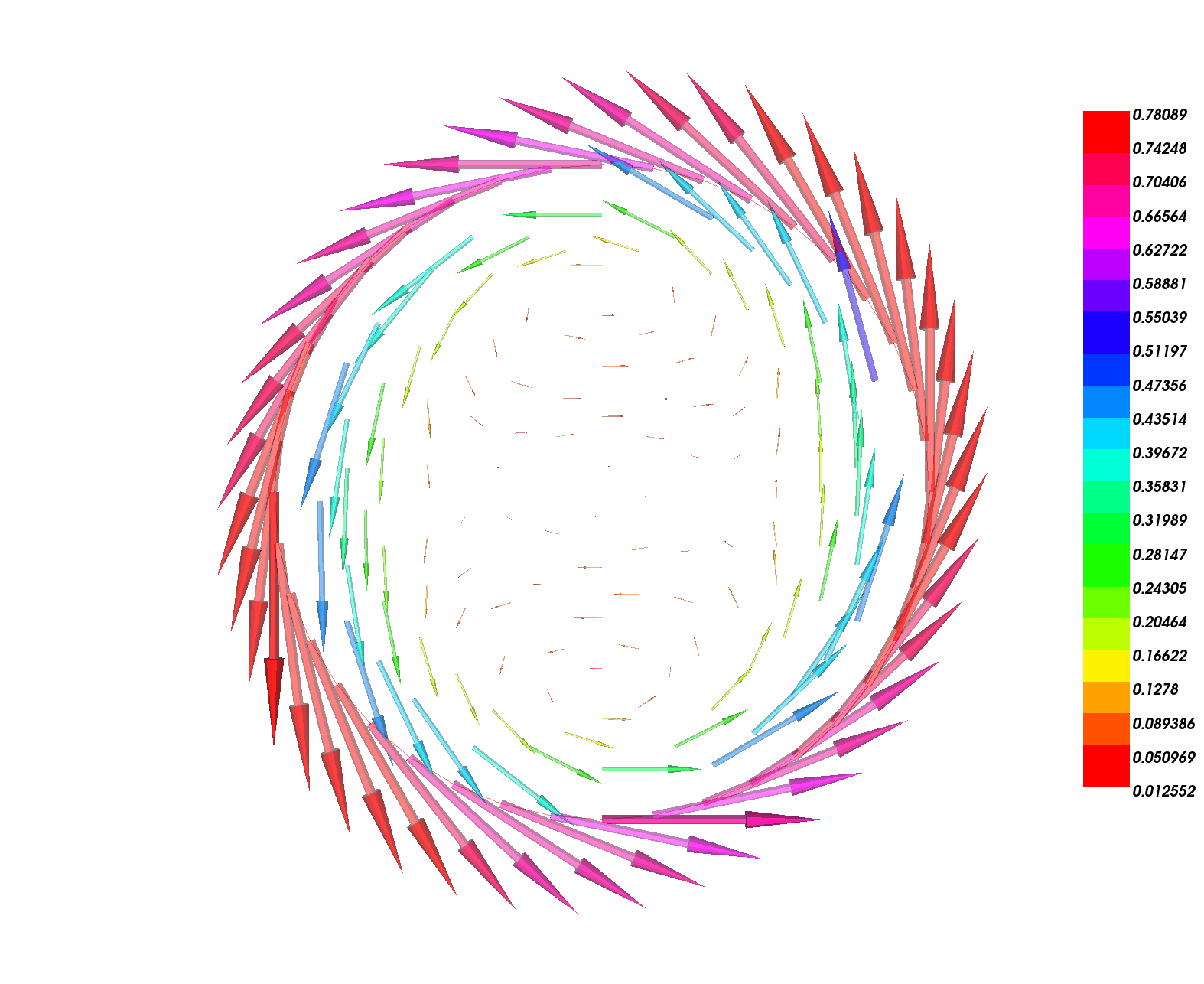}
	\includegraphics[width=7.7cm]{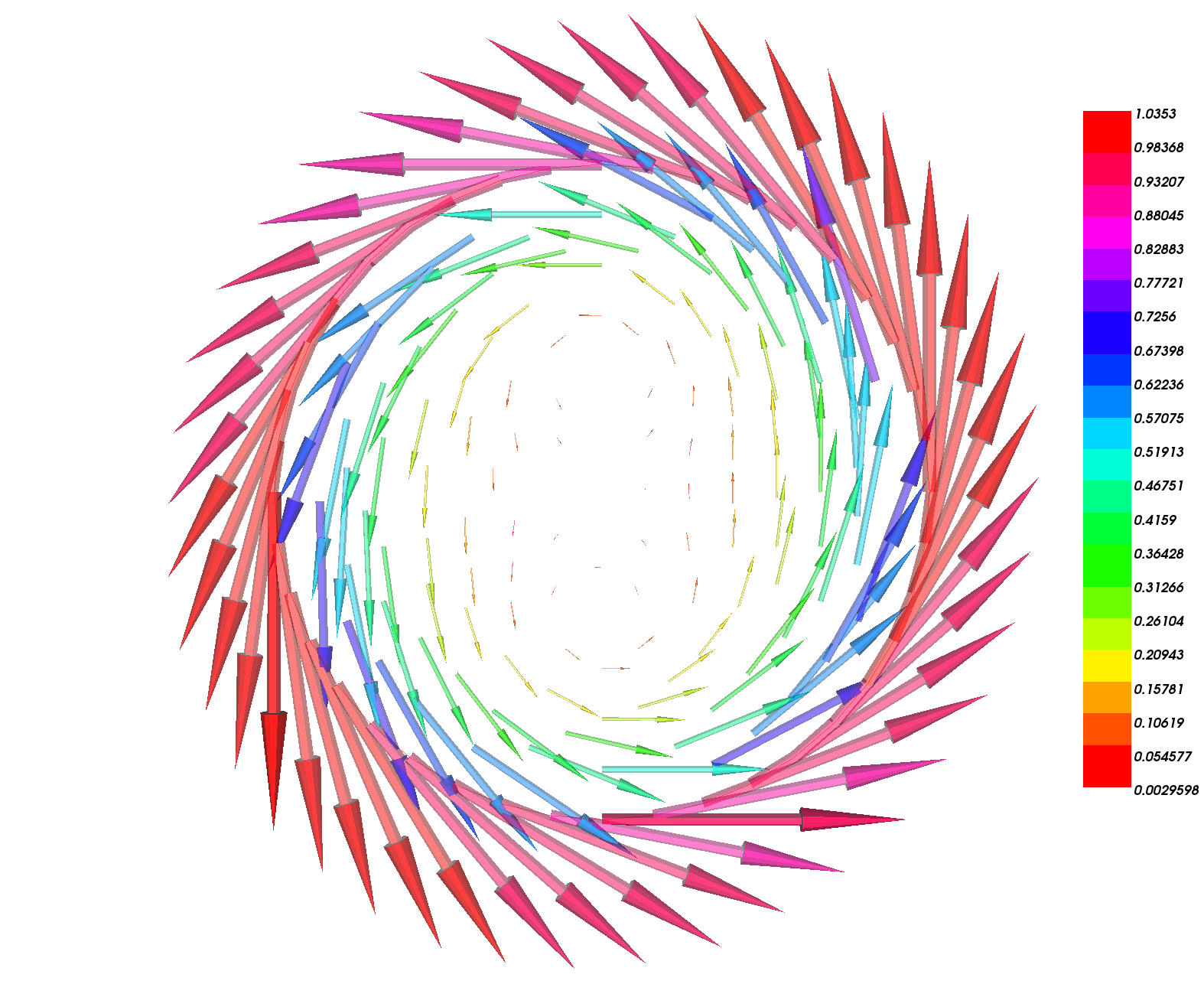}
	\includegraphics[width=7.7cm]{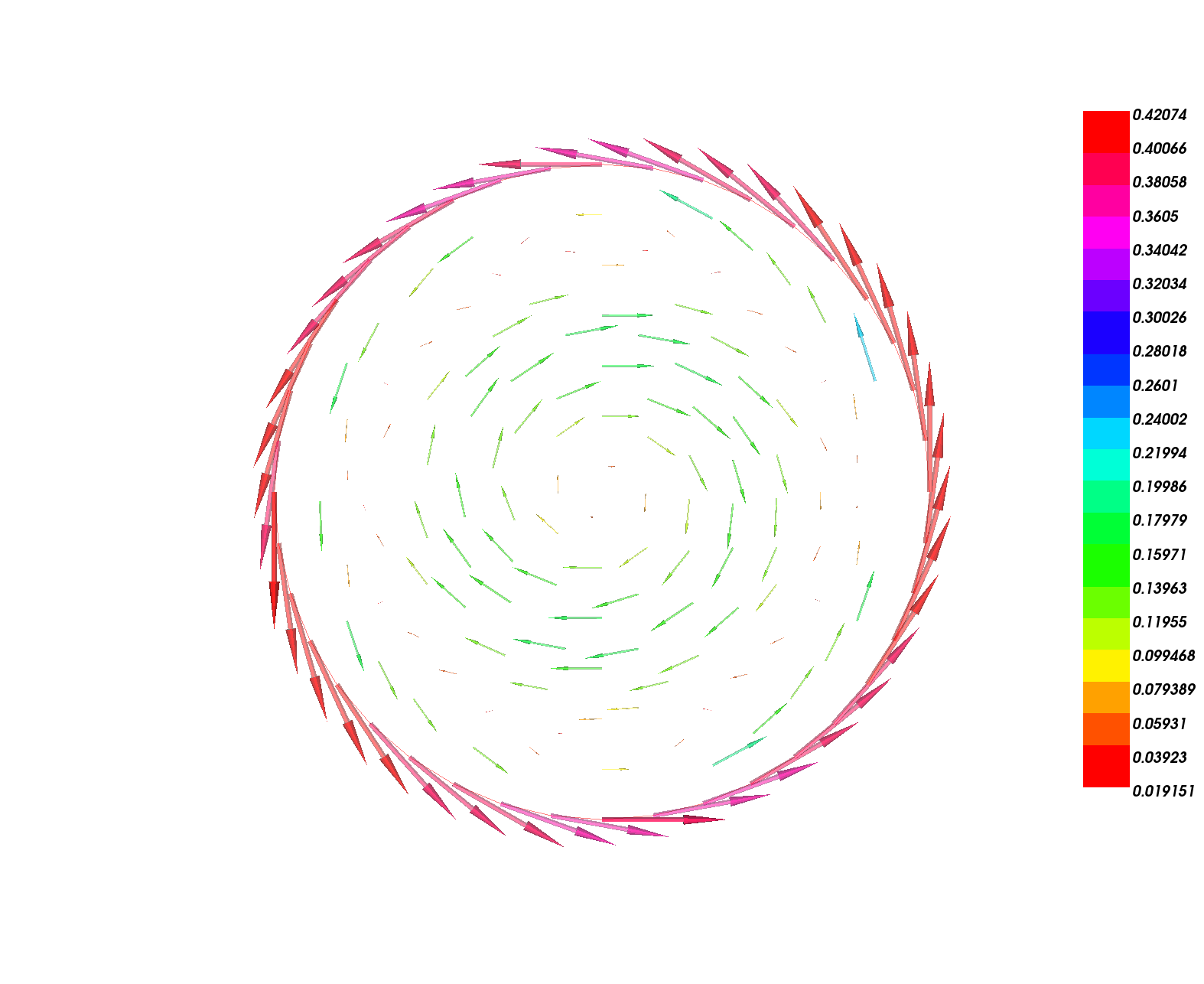}
	\includegraphics[width=7.7cm]{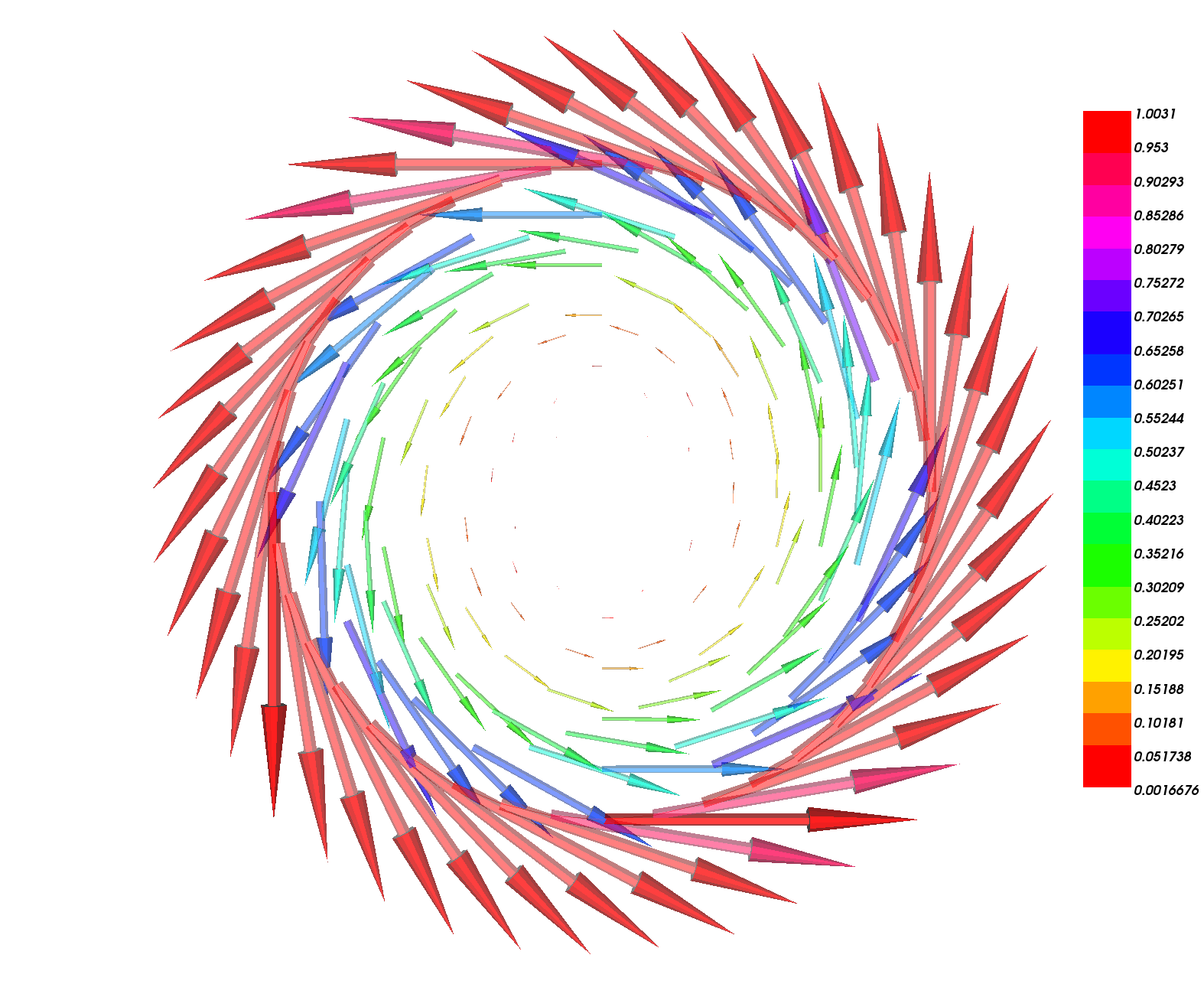}
	\includegraphics[width=7.7cm]{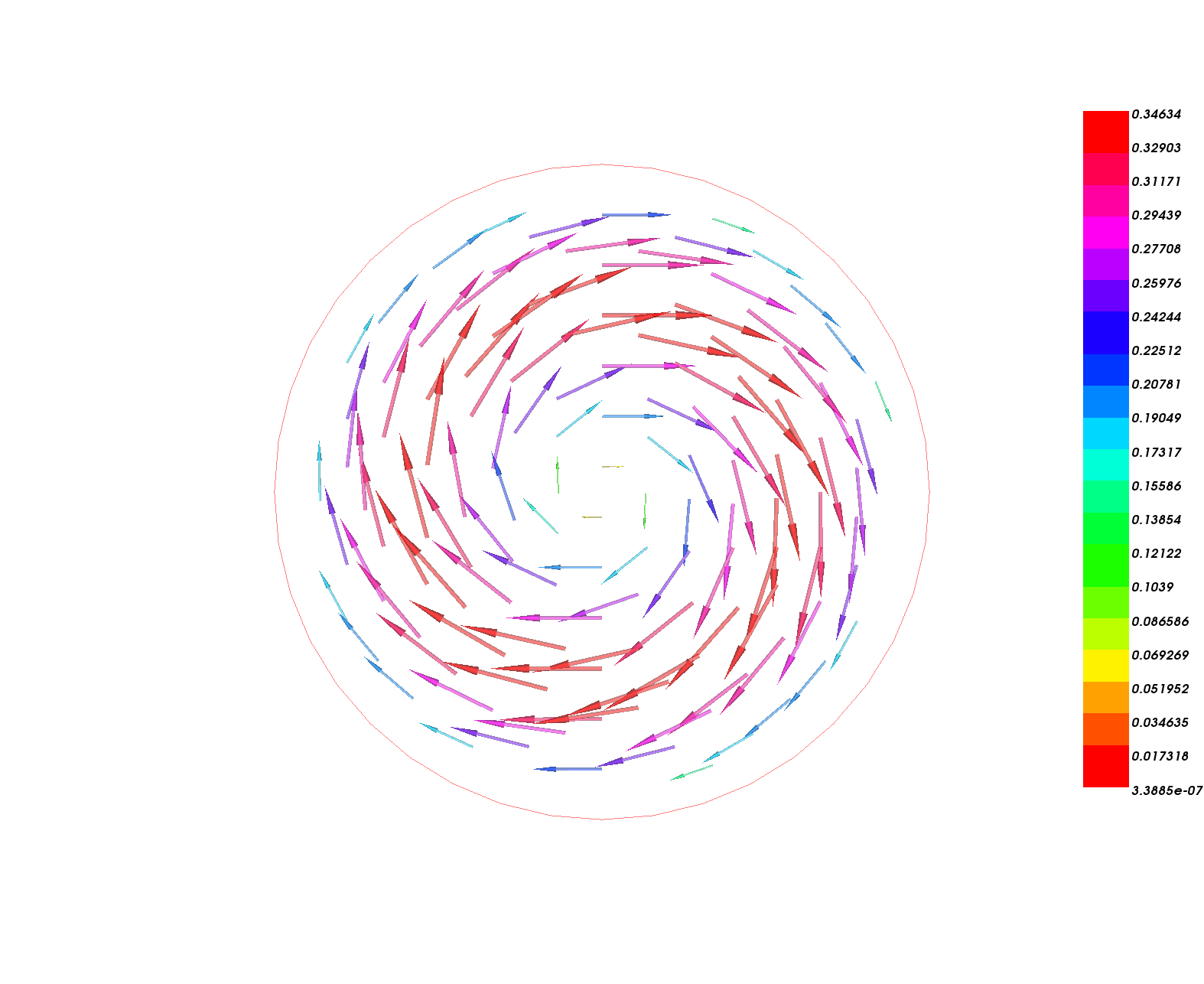}
	\includegraphics[width=7.7cm]{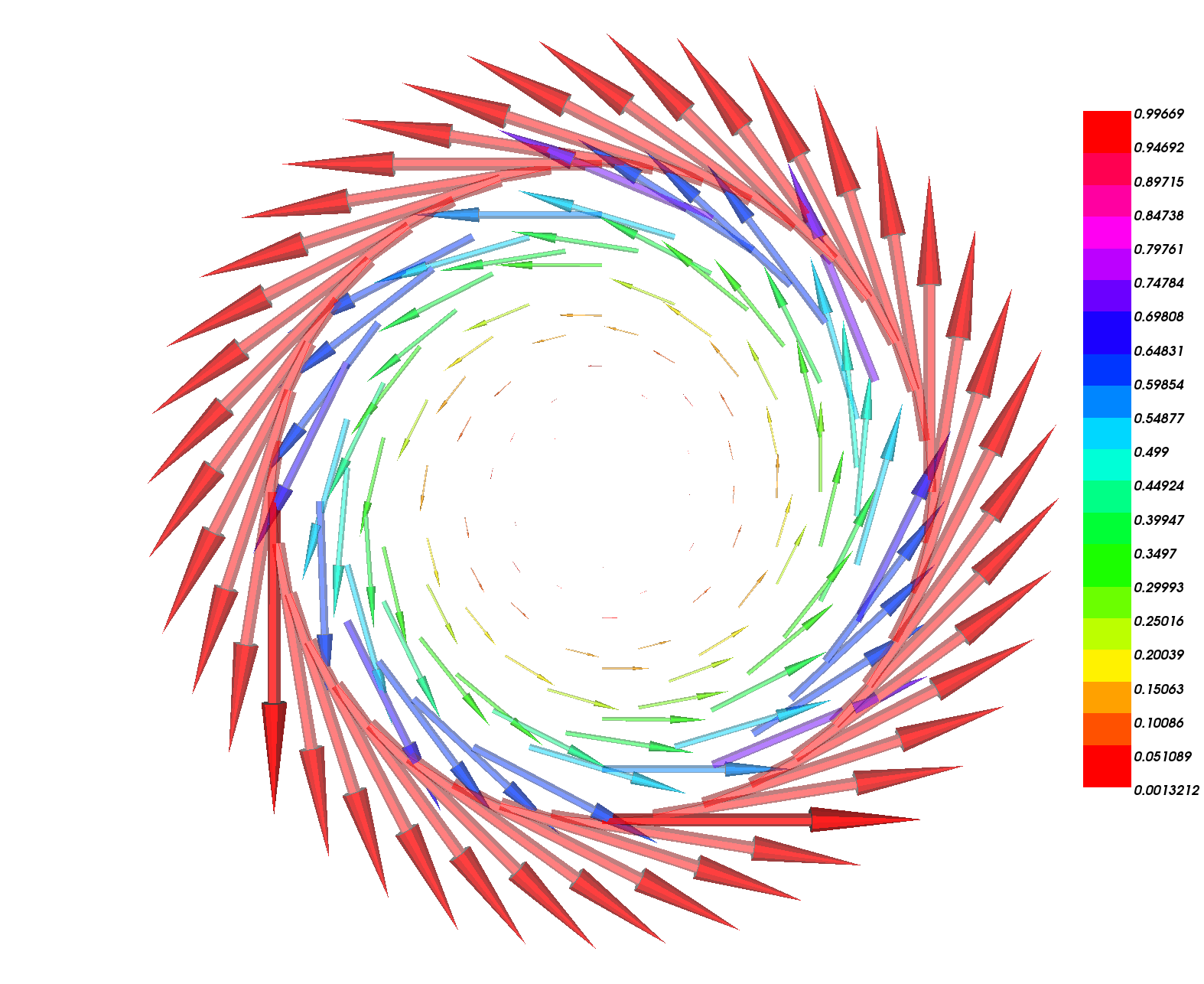}
	\caption{Velocity profiles of numerical solutions computed with non-reduced (left column) and reduced (right column) schemes.
	For each row (top to bottom), $\epsilon$ is chosen as $0.1h$, $0.1h^2$, $10^{-8}$, where $h\approx0.241$.}
	\label{fig:velocity profile of numerical solutions}
\end{figure}
\begin{table}[htbp]
	\centering
	\caption{Convergence behavior of velocity in the $H^1(\Omega_h)^2$-norm in the 2D test (top: $\epsilon = 0.1h$, bottom: $\epsilon=0.1h^2$). DOF means the number of degrees of freedom.}
	\label{tab:convergence behavior in 2D}
	\begin{tabular}{crcccccc}
		$h$ & DOF & $\|u - u_h^{\textrm{NR}}\|$ & Rate & $\|u - u_h^{\textrm{R}}\|$ & Rate & $\|u - u_h^{\textrm{Dir}}\|$ & Rate \\
		\toprule
		0.316 &       333 & 1.043 & ---    & 0.575 & ---    & 0.464 & ---     \\
		0.165 &     1182 & 0.567 & 0.94 & 0.296 & 1.09 & 0.231 & 1.07 \\
		0.078 &     4488 & 0.310 & 0.81 & 0.145 & 0.94 & 0.114 & 0.94 \\
		0.045 &   17391 & 0.146 & 1.37 & 0.077 & 1.30 & 0.057 & 1.28 \\
		0.023 &   69270 & 0.074 & 1.02 & 0.039 & 1.02 & 0.028 & 1.02 \\
		0.012 & 274956 & 0.036 & 1.16 & 0.020 & 1.05 & 0.014 & 1.10
	\end{tabular}
	
	\vspace{2mm}
	\begin{tabular}{crcccccc}
		$h$ & DOF & $\|u - u_h^{\textrm{NR}}\|$ & Rate & $\|u - u_h^{\textrm{R}}\|$ & Rate & $\|u - u_h^{\textrm{Dir}}\|$ & Rate \\
		\toprule
		0.316 &       333 & 1.683 & ---        & 0.479 & ---    & 0.464 & ---     \\
		0.165 &     1182 & 1.559 & 0.12     & 0.232 & 1.12 & 0.231 & 1.07 \\
		0.078 &     4488 & 1.618 & ($<0$) & 0.114 & 0.95 & 0.114 & 0.94 \\
		0.045 &   17391 & 1.412 & 0.25     & 0.057 & 1.28 & 0.057 & 1.28 \\
		0.023 &   69270 & 1.388 & 0.03     & 0.028 & 1.02 & 0.028 & 1.02 \\
		0.012 & 274956 & 1.293 & 0.11     & 0.014 & 1.10 & 0.014 & 1.10
	\end{tabular}
\end{table}
\begin{figure}[htbp]
	\centering
	\includegraphics[width=5cm]{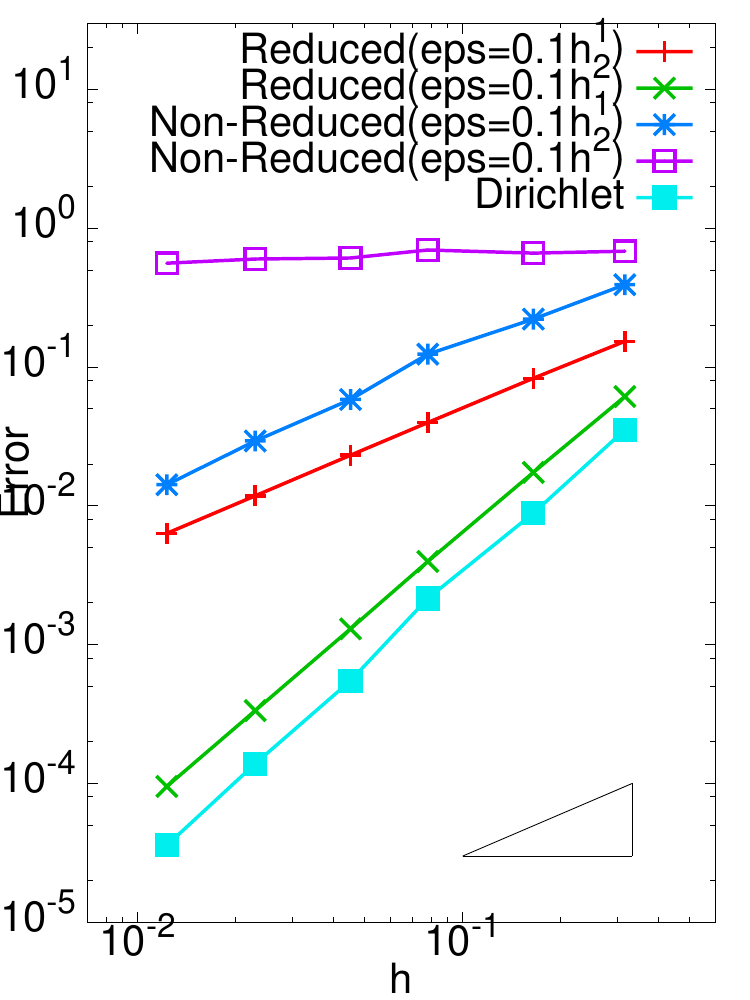}
	\includegraphics[width=5cm]{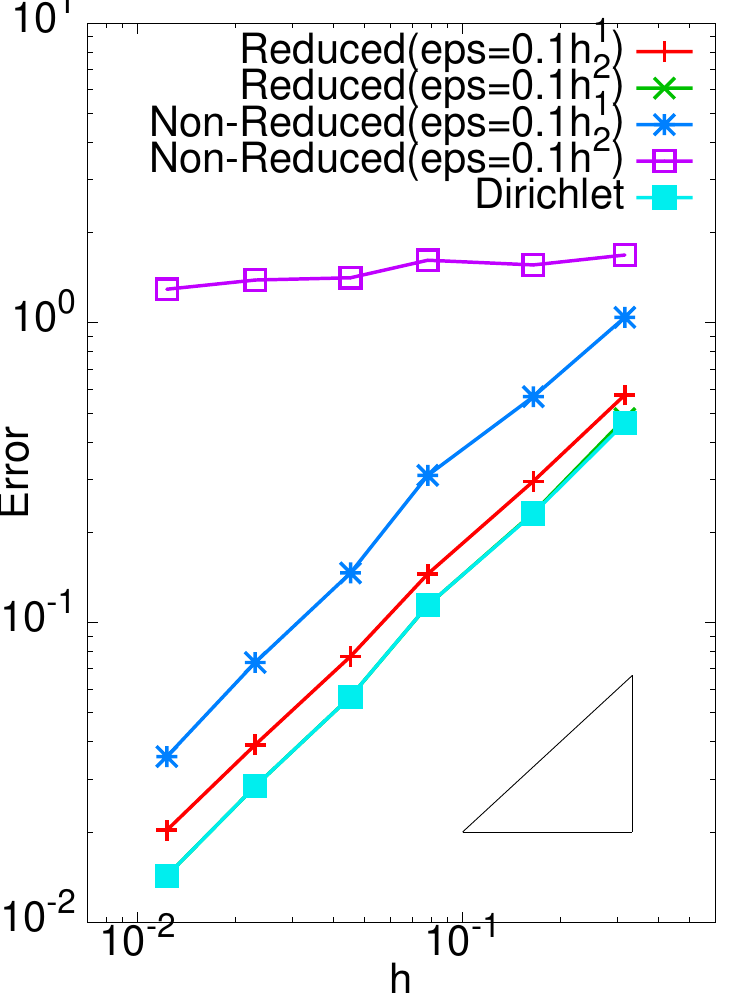}
	\includegraphics[width=5cm]{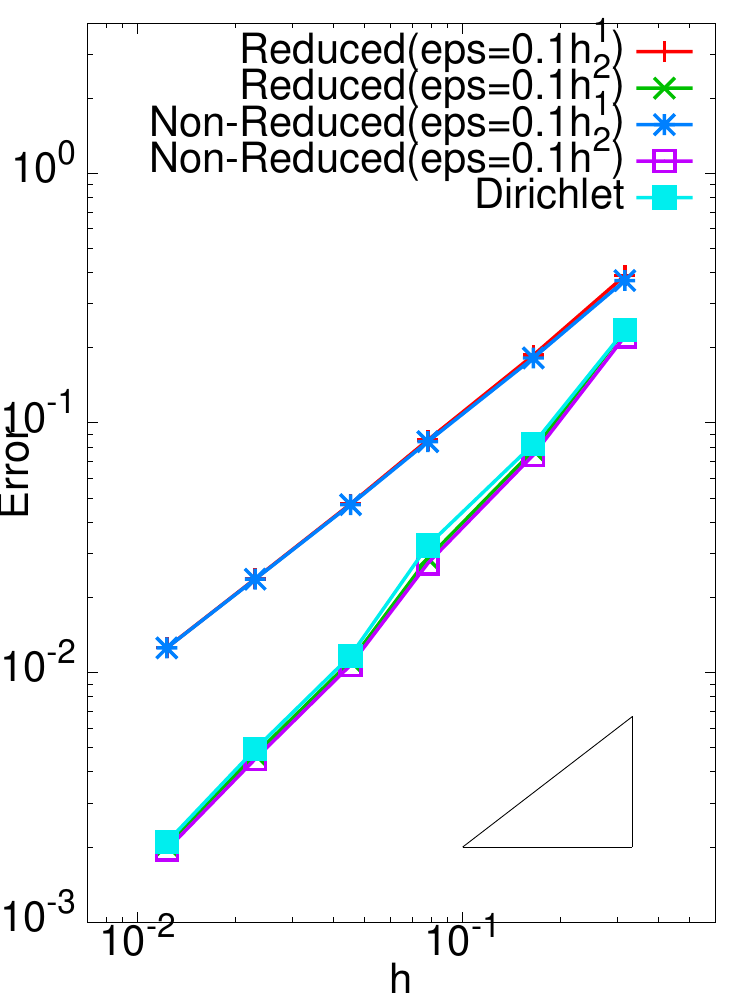}
	\caption{Convergence behavior of $\|u - u_h\|_{L^2(\Omega_h)}$ (left), $\|u - u_h\|_{H^1(\Omega_h)}$ (middle), $\|p - p_h\|_{L^2(\Omega_h)}$ (right) in the 2D test.
	The triangles indicate the slope $O(h)$.}
	\label{fig:convergence behavior in 2D}
\end{figure}

On a mesh with $h\approx0.241$, we computed numerical solutions of the slip boundary value problem using the non-reduced/reduced schemes for three choices of the penalty parameter $\epsilon$: $O(h)$, $O(h^2)$, and very small.
The results are shown in \fref{fig:velocity profile of numerical solutions}.
We find that the reduced scheme gives more robust and accurate approximate solutions.
In fact, in case $\epsilon = O(h)$, there are two spurious circulations inside $\Omega$ for the non-reduced scheme; we remark that refining a mesh and keeping $\epsilon=O(h)$ did not suppress them.
For smaller $\epsilon$ the non-reduced scheme fails to capture the correct solution and seems to approach the no-slip boundary value problem.
This behavior is somehow expected because letting $\epsilon\to0$ in the penalty term of \eref{eq:discrete problem with 2 variables} implies (at least formally) the constraint $u_h\cdot n_h = 0$ on $\Gamma_h$, which undesirably collapses into $u_h=0$ on $\Gamma_h$ as observed in \sref{sec:intro}.
However, the reduced scheme produces solutions which capture the slip boundary condition correctly for all $\epsilon>0$ sufficiently small.
Therefore, it is expected that the error bound \eref{eq:error bound for reduced scheme} could be improved in such a way that the reciprocal of $\epsilon$ would not appear (we conjecture that some inf-sup condition like \eref{eq:inf-sup condition for lambdah} would be valid).

Next we study the convergence property of the non-reduced and reduced schemes, whose solutions are denoted by $(u_h^{\textrm{NR}}, p_h^{\textrm{NR}})$ and $(u_h^{\textrm{R}}, p_h^{\textrm{R}})$, respectively.
We compare the convergence behavior of them with that of the numerical solutions computed with the Dirichlet boundary condition, which are denoted by $(u_h^{\textrm{Dir}}, p_h^{\textrm{Dir}})$ (i.e., the boundary condition $u_h^{\textrm{Dir}} = u$ on $\Gamma_h$ is imposed).
The linear solver is chosen as UMFPACK, and $u - u_h$ and $p - p_h$ are interpolated into the quadratic finite element space to compute errors in the associated norms.
For convenience, we also report errors in the $L^2(\Omega_h)^2$-norm of velocity, and we remark that $\|u\|_{L^2(\Omega)}\approx0.886$, $\|u\|_{H^1(\Omega)}\approx3.355$, $\|p\|_{L^2(\Omega)}\approx2.894$.
The results are presented in \tabref{tab:convergence behavior in 2D} and \fref{fig:convergence behavior in 2D} for two choices of $\epsilon$.
We find that $\|u - u_h^{\textrm{R}}\|_{L^2(\Omega_h)} = O(h^2)$ for $\epsilon = O(h^2)$ and that $\|u - u_h^{\bullet}\|_{H^1(\Omega_h)} = O(h)$ for $\bullet=\textrm{NR, R}$ and for $\epsilon = O(h)$, which cannot be explained by the theory.
Nevertheless, the fact that the reduced scheme with $\epsilon = O(h^2)$ achieves the best accuracy is in accordance with the theoretical prediction.
We see from \fref{fig:convergence behavior in 2D} that its accuracy in the energy norm is almost the same as that of $(u_h^{\textrm{Dir}}, p_h^{\textrm{Dir}})$.

\subsection{Three-dimensional test}
In this example, we focus on the verification of convergence.
Let $\Omega$ be the unit sphere, i.e., $\Omega = \{ (x,y,z)\in\mathbb R^3 \,:\, x^2+y^2+z^2<1 \}$.
We consider \eref{eq:Stokes slip BC} for $\nu = 1$ and for $f,g,\tau$ such that the analytical solution is
\begin{equation*}
	u = \begin{pmatrix} 10x^2yz(y - z) \\ 10y^2zx(z - x) \\ 10z^2xy(x - y) \end{pmatrix}, \quad p = 10xyz(x+y+z).
\end{equation*}
We remark that $\|u\|_{L^2(\Omega)}\approx0.708$, $\|u\|_{H^1(\Omega)}\approx4.943$, $\|p\|_{L^2(\Omega)}\approx1.043$.
The computations are done with the use of \verb#FEniCS# \cite{fenics} (combined with \verb+Gmsh+ \cite{gmsh} for obtaining meshes) choosing  the P1/P1 element with $\eta=0.1$.
The linear solver is GMRES, preconditioned by incomplete LU factorization, with the restart number 200 and with the relative tolerance $10^{-8}$.
As in the previous example, $u - u_h$ and $p - p_h$ are interpolated into the quadratic finite element space to compute their norms.
The results are reported in \tabref{tab:convergence behavior in 3D} and \fref{fig:convergence behavior in 3D}.
Except case of the non-reduced scheme with $\epsilon = O(h^2)$, the errors seem to converge at the rate $O(h)$, which is better than the theoretically predicted one $O(h^{1/2})$.
Our opinion is that the interior errors would be dominant with the resolution of meshes considered here and that the suboptimal rate $O(h^{1/2})$ would be observed only for a very fine mesh.
However, from the results we infer that the reduced-order numerical integration is also effective for the case $N=3$, in which the choice of $\epsilon = O(h^2)$ may be recommended (although this was not justified by a rigorous proof).
With this choice, as in the two-dimensional case, the accuracy in the energy norm is comparable with that of $(u_h^{\textrm{Dir}}, p_h^{\textrm{Dir}})$.

\begin{table}[tbp]
	\centering
	\caption{Convergence behavior of velocity in the $H^1(\Omega_h)^3$-norm in the 3D test (top: $\epsilon = 0.1h$, bottom: $\epsilon=0.1h^2$).
	Itr means the number of iterations required for GMRES to converge.}
	\label{tab:convergence behavior in 3D}
	\begin{tabular}{ccccccccccc}
		$h$ & DOF & $\|u - u_h^{\textrm{NR}}\|$ & Rate & Itr & $\|u - u_h^{\textrm{R}}\|$ & Rate & Itr & $\|u - u_h^{\textrm{Dir}}\|$ & Rate & Itr \\
		\toprule
		0.240 & 1.11E+4 & 1.471 & ---     & 69     & 1.048 & ---    & 70     & 1.268 & ---    & 76     \\
		0.113 & 1.12E+5 & 0.656 & 1.08 & 238   & 0.638 & 1.06 & 279   & 0.574 & 1.06 & 349   \\
		0.075 & 3.42E+5 & 0.454 & 0.89 & 352   & 0.448 & 0.86 & 352   & 0.405 & 0.85 & 393   \\
		0.062 & 6.59E+5 & 0.372 & 1.08 & 469   & 0.367 & 1.07 & 473   & 0.326 & 1.16 & 751   \\
		0.052 & 1.17E+6 & 0.306 & 1.05 & 655   & 0.303 & 1.04 & 658   & 0.266 & 1.10 & 790   \\
		0.045 & 1.88E+6 & 0.263 & 1.04 & 901   & 0.261 & 1.03 & 899   & 0.227 & 1.09 & 1979 \\
		0.039 & 2.39E+6 & 0.238 & 0.87 & 1749 & 0.236 & 0.87 & 1638 & 0.205 & 0.88 & 5179 \\
	\end{tabular}
	
	\vspace{2mm}
	\begin{tabular}{ccccccccccc}
		$h$ & DOF & $\|u - u_h^{\textrm{NR}}\|$ & Rate & Itr & $\|u - u_h^{\textrm{R}}\|$ & Rate & Itr & $\|u - u_h^{\textrm{Dir}}\|$ & Rate & Itr \\
		\toprule
		0.240 & 1.11E+4 & 1.590 & ---     & 77     & 1.350 & ---    & 81     & 1.268 & ---     & 76    \\
		0.113 & 1.12E+5 & 0.877 & 0.79 & 270   & 0.579 & 1.13 & 304   & 0.574 & 1.06 & 349   \\
		0.075 & 3.42E+5 & 0.630 & 0.80 & 467   & 0.405 & 0.87 & 646   & 0.405 & 0.85 & 393   \\
		0.062 & 6.59E+5 & 0.575 & 0.49 & 742   & 0.327 & 1.15 & 782   & 0.326 & 1.16 & 751   \\
		0.052 & 1.17E+6 & 0.534 & 0.41 & 1111  & 0.271 & 1.02 & 1348   & 0.266 & 1.10 & 790   \\
		0.045 & 1.88E+6 & 0.493 & 0.54 & 1735 & 0.231 & 1.08 & 2175   & 0.227 & 1.09 & 1979 \\
		0.039 & 2.39E+6 & 0.477 & 0.29 & 2103 & 0.201 & 0.89 & 2600 & 0.205 & 0.88 & 5179 \\
	\end{tabular}
\end{table}
\begin{figure}[htbp]
	\centering
	\includegraphics[width=5cm]{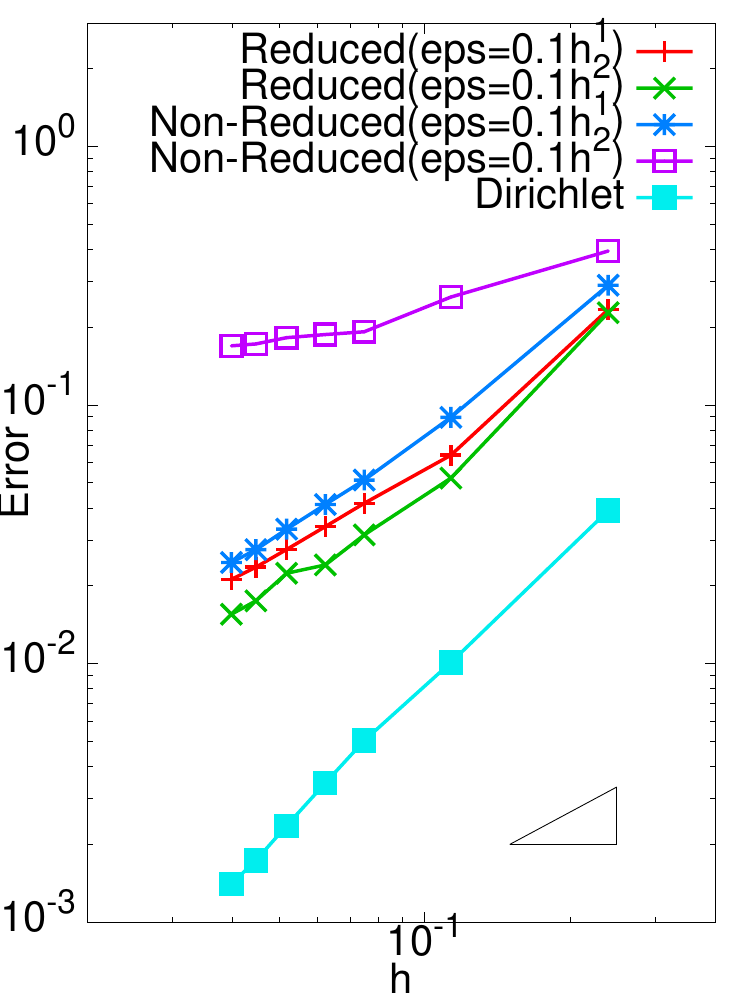} \hspace{1mm}
	\includegraphics[width=5cm]{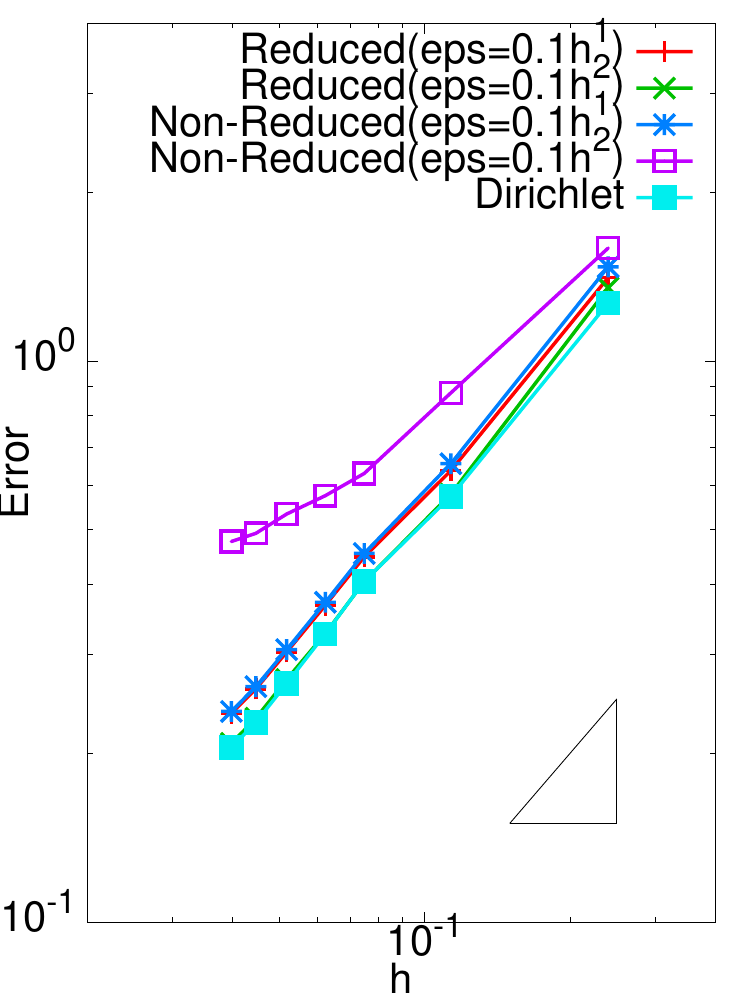} \hspace{1mm}
	\includegraphics[width=5cm]{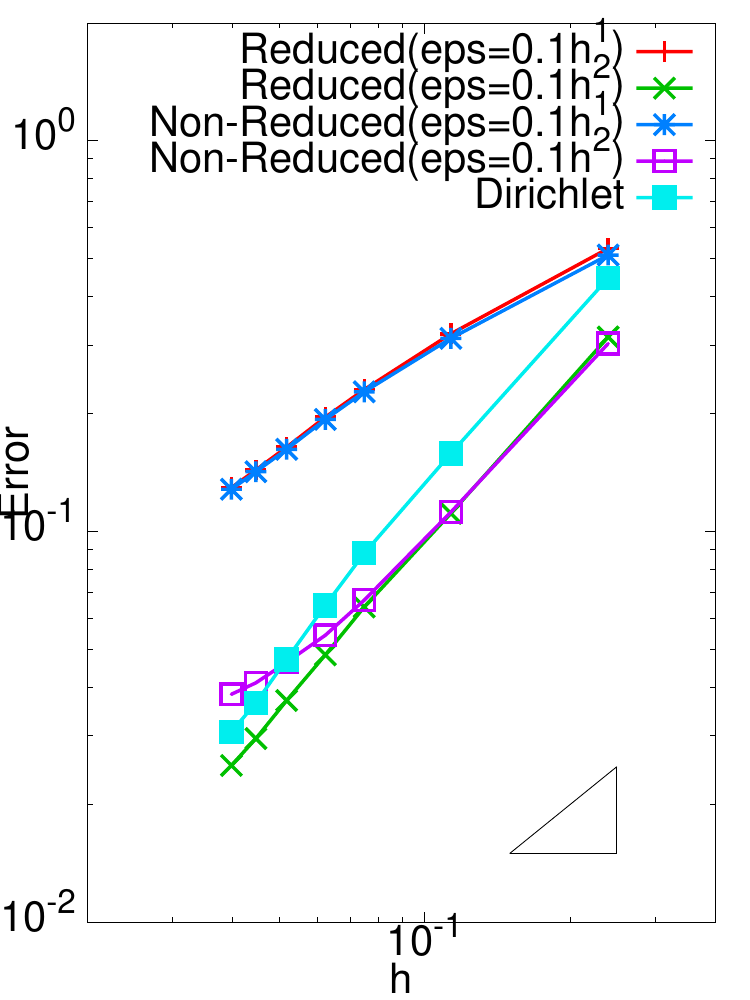}
	\caption{Convergence behavior of $\|u - u_h\|_{L^2(\Omega_h)}$ (left), $\|u - u_h\|_{H^1(\Omega_h)}$ (middle), $\|p - p_h\|_{L^2(\Omega_h)}$ (right) in the 3D test.
	The triangles indicate the slope $O(h)$.}
	\label{fig:convergence behavior in 3D}
\end{figure}

\subsection{Affect of penalty parameter on linear solvers}
It is known that the use of too strong penalty (i.e.\ small $\epsilon$ in our case) would lead to ill-conditioned problems (see e.g.\ \cite{Cas02}), which could deteriorate the performance of linear solvers.
Therefore, we examine a condition number of the matrix $A$ obtained from our penalty FE scheme, in particular, its dependency on the penalty parameter $\epsilon$.
For this purpose, we fix the mesh $h\approx0.113$ in the 3D test, varying $\epsilon$ from 100 to $10^{-8}$.
Moreover, we consider only the reduced scheme since the behavior was similar for the non-reduced one.
The condition number is then estimated by the Matlab function \verb+condest(A)+.
We also report the number of iterations required for GMRES and BiCGSTAB to converge, which were preconditioned by incomplete LU factorization.
The results are presented in \tabref{tab:epsilon and GMRES}.
It seems that the condition number grows at the rate $O(\epsilon^{-2})$, which is faster than the one explained in \cite[p.\ 532]{Cas02}.
One also notices that GMRES and BiCGSTAB failed to converge when $\epsilon<10^{-5}$.
We remark that, even for such small $\epsilon$, sparse direct solvers like UMFPACK or MUMPS were able to solve the linear system (apparently with no problem).
Although we do not have a good explanation of this phenomenon, it tells us that one should carefully choose $\epsilon$ in order to assure both the accuracy and the numerical stability, especially when one wants to invoke iterative methods for solving linear systems obtained from the penalty method.

\begin{table}[htbp]
	\centering
	\caption{Penalty parameter, condition number, and number of iterations for GMRES (the 4th and 5th columns) or BiCGSTAB (the last column) to converge in the 3D test with the mesh $h\approx0.113$ (DOF is 112476).
	The absolute and relative tolerances are set to $10^{-10}$ and $10^{-6}$, respectively.}
	\label{tab:epsilon and GMRES}
	\begin{tabular}{cccccc}
		$\epsilon$ & condest(A) & Rate & Itr (restart=30) & Itr (restart=200) & Itr (BiCGSTAB) \\
		\toprule
		1.0E+2 & 2.36E+6   & ---        & 655   & 182   & 1373 \\
		1.0E+1 & 2.27E+6   & $(<0)$ & 672   & 191   & 165   \\
		1.0E+0 & 2.45E+6   & 0.03     & 733   & 195   & 516   \\
		1.0E-1  & 3.53E+6   & 0.16     & 392   & 165   & 264   \\
		1.0E-2  & 1.64E+7   & 0.67     & 480   & 139   & 152   \\
		1.0E-3  & 1.46E+8   & 0.95     & 539   & 195   & 894   \\
		1.0E-4  & 1.56E+9   & 1.03     & 1432 & 352   & 2888 \\
		1.0E-5  & 1.54E+11 & 2.00     & 28162 & 370 & 2293 \\
		1.0E-6  & 1.54E+13 & 2.00     & (not converged) & (not converged) & (not converged) \\
		1.0E-7  & 1.54E+15 & 2.00     & (not converged) & (not converged) & (not converged) \\
		1.0E-8  & 1.54E+17 & 2.00     & (not converged) & (not converged) & (not converged) \\
	\end{tabular}
\end{table}

\section{Conclusion}
We investigated the P1/P1 and P1b/P1 finite element approximations for the Stokes equations subject to the slip boundary condition in a domain with a smooth boundary.
The constraint $u\cdot n = g$ on $\Gamma$ is relaxed by using the penalty method, which enables us to avoid a variational crime and makes the numerical implementation easier.
We developed a framework to address the difficulty due to $\Omega\neq\Omega_h$ and successfully applied it to establish error estimates of the finite element approximation.
The use of reduced-order numerical integration together with $\epsilon = O(h^2)$ in the penalty term improves the accuracy, which was theoretically justified for $N=2$ and was numerically confirmed for $N=3$.
In fact, we observed that the accuracy in the energy norm was comparable with that of numerical solutions subject to Dirichlet boundary conditions.

\appendix
\section{Transformation between $\Gamma$ and $\Gamma_h$ and related estimates} \label{sec:transformation between Gamma and Gammah}
Let $\Omega$ be a bounded domain in $\mathbb R^N$.
Its boundary $\Gamma$ is assumed to be $C^{1,1}$-smooth, namely, there exist a system of local coordinates $\{(U_r, y_r, \varphi_r)\}_{r=1}^M$ and positive numbers $\alpha, \beta$ such that:
1) $\{U_r\}_{r=1}^M$ forms an open covering of $\Gamma$;
2) $y_r = (y_{r1}, \dots, y_{rN-1}, y_{rN}) = (y_r', y_{rN})$ is a rotated coordinate of the original one $x$, that is, $y_r = A_rx$ for some orthogonal transformation $A_r$; 
3) $\varphi_r\in C^{1,1}(\Delta_r)$ gives a graph representation of $\Gamma\cap U_r$, where $\Delta_r := \{y_r' \in \mathbb R^{N-1} :\; |y_r'| < \alpha\}$, that is,
\begin{align*}
	\Gamma \cap U_r &= \{ y_r \in \mathbb R^N \,:\; y_r' \in \Delta_r \text{ and } y_{rN} = \varphi_r(y_r') \}, \\
	\Omega \cap U_r &= \{ y_r \in \mathbb R^N \,:\; y_r' \in \Delta_r \text{ and } \varphi_r(y_r') < y_{rN} < \varphi_r(y_r') + \beta \}, \\
	\overline\Omega^c \cap U_r &= \{ y_r \in \mathbb R^N \,:\; y_r' \in \Delta_r \text{ and } \varphi_r(y_r') - \beta < y_{rN} < \varphi_r(y_r') \}.
\end{align*}
Since $\Gamma$ is compact, there exists $h_0>0$ such that for any $x\in\Gamma$ the open ball $B(x; h_0)$ is contained in some local coordinate neighborhood $U_r$.
According to the fact that $C^{1,1}(\Delta_r) = W^{2,\infty}(\Delta_r)$, the derivatives of $\varphi_r$ are bounded up to second order, i.e.,
\begin{equation*}
	\|\varphi_r\|_{L^\infty(\Delta_r)} \le C_0, \quad \|\nabla'\varphi_r\|_{L^\infty(\Delta_r)} \le C_1, \quad \|\nabla'^2\varphi_r\|_{L^\infty(\Delta_r)} \le C_2,
\end{equation*}
where $C_0,\,C_1,\,C_2$ are constants independent of $r$, and $\nabla'$ means $\nabla_{y'}$.

The smoothness of $\Gamma$ is connected with that of the signed distance function $d(x)$ defined by
\begin{equation*}
	d(x) =
	\begin{cases}
		-\mathrm{dist}(x, \Gamma) & \text{if }\; x\in\Omega, \\
		\mathrm{dist}(x, \Gamma) & \text{if }\; x\in\Omega^c.
	\end{cases}
\end{equation*}
We collect several known properties on $d(x)$ below. For the details, see e.g.\ \cite[Section 14.6]{GiTr98} or \cite[Section 7.8]{DeZo11}.
Let $\Gamma(\delta) := \{ x\in\mathbb R^N :\; |d(x)| < \delta \}$ be a tubular neighborhood of $\Gamma$ with width $2\delta$.
Then there exists $\delta$ depending only on the curvature of $\Gamma$ such that for arbitrary $x\in\Gamma(\delta)$ the decomposition
\begin{equation} \label{eq:decomposition}
	x = \pi(x) + d(x)\, n(\pi(x)), \quad \pi(x)\in\Gamma,
\end{equation}
is uniquely determined.
Here, $n$ is the outer unit normal field defined on $\Gamma$, which coincides with $\nabla d|_{\Gamma}$.
We extend $n$ from $\Gamma$ to $\Gamma(\delta)$ by $n(x) = \nabla d(x)$, which also agrees with $n(\pi(x))$.
The fact that $\Gamma$ is $C^{1,1}$-smooth implies that $d\in C^{1,1}(\Gamma(\delta))$, $n\in C^{0,1}(\Gamma(\delta))$, and $\pi\in C^{0,1}(\Gamma(\delta))$.
We call $\pi: \Gamma(\delta)\to \Gamma$ the \emph{orthogonal projection onto $\Gamma$}, in view of its geometrical meaning.
We may assume that
\begin{equation*}
	\|d\|_{L^\infty(\Gamma(\delta))} \le C_0, \quad \|\nabla d\|_{L^\infty(\Gamma(\delta))} \le C_1, \quad \|\nabla^2d\|_{L^\infty(\Gamma(\delta))} \le C_2,
\end{equation*}
where we re-choose the constants $C_0,\,C_1,\,C_2$ if necessary.

Now we introduce a regular family of triangulations $\{\mathcal T_h\}_{h\downarrow0}$ of $\overline\Omega$ in the sense of \sref{subsec:triangulation}.
As before, we denote by $\mathcal S_h$ the boundary mesh inherited from $\mathcal T_h$, and we set $\overline\Omega_h = \cup_{T\in\mathcal T_h}T$ and $\Gamma_h = \cup_{S\in\mathcal S_h}S$.
In order for $\Gamma_h$ to be compatible with the local-coordinate system $\{(U_r, y_r, \varphi_r)\}_{r=1}^M$, we assume the following:
\begin{enumerate}[1)]
	\item the mesh size $h$ is less than $\min\{h_0, 1\}$;
	\item for every $r=1,\dots,M$, $\Gamma_h\cap U_r$ is represented by a graph $\{(y_r', \varphi_{rh}(y_r'))\in\mathbb R^N :\; y_r'\in\Delta_r \}$;
	\item every vertex of $S\in\mathcal S_h$ lies on $\Gamma$.
\end{enumerate}
From these we see that every $S\in\mathcal S_h$ is contained in some $U_r$ and that $\varphi_{rh}$ is a piecewise linear interpolation of $\varphi_r$.
As a result of interpolation error estimates, for arbitrary $r$ we obtain
\begin{equation*}
	\begin{aligned}
		\|\varphi_{rh}\|_{L^\infty(\Delta_r)} &\le C_0, \\
		\|\nabla'\varphi_{rh}\|_{L^\infty(\Delta_r)} &\le C_1,
	\end{aligned}
	\qquad
	\begin{aligned}
		\|\varphi_r - \varphi_{rh}\|_{L^\infty(\Delta_r)} &\le C_{0E}h^2, \\
		\|\nabla'(\varphi_r - \varphi_{rh})\|_{L^\infty(\Delta_r)} &\le C_{1E}h,
	\end{aligned}
\end{equation*}
where we re-choose $C_0,\,C_1$ if necessary and the subscript E refers to ``error''.
In the following, $h$ is made small enough to satisfy $2C_{0E}h^2< \min\{h_0, \delta\}$, which in particular ensures that $\pi$ is well-defined on $\Gamma_h$.
We may assume further that $\pi(S)$ is contained in the same $U_r$ that contains $S$.

Based on the observations above, we see that the orthogonal projection $\pi$ maps $\Gamma_h$ into $\Gamma$.
It indeed gives a homeomorphism between $\Gamma_h$ and $\Gamma$, and is element-wisely a diffeomorphism, as shown below.
The representation of a function $f(x)$ in each local coordinate $(U_r, y_r, \varphi_r)$ is defined as $\tilde f(y_r) := f(A_r^{-1}y_r)$.
However, with some abuse of notation, we denote it simply by $f(y_r)$.
Then, the local representation of the outer unit normal $n$ associated to $\Gamma$ is given by
\begin{align*}
	n(y_r', \varphi_r(y_r')) = \frac1{ K_r(y_r') } \begin{pmatrix} \nabla'\varphi_r(y_r') \\ -1 \end{pmatrix}, \qquad y_r'\in\Delta_r,
\end{align*}
where $K_r(y_r') := \sqrt{1 + |\nabla'\varphi_r(y_r')|^2}$.
We are ready to state the following.

\begin{prop} \label{prop:homeo}
	If $h>0$ is sufficiently small, then $\pi|_{\Gamma_h}$ is a homeomorphism between $\Gamma_h$ and $\Gamma$.
\end{prop}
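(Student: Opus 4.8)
The plan is to exhibit $\pi|_{\Gamma_h}$ as a continuous bijection from the compact set $\Gamma_h$ onto the Hausdorff space $\Gamma$; such a map is automatically a homeomorphism. Continuity is immediate: by the standing smallness hypothesis $2C_{0E}h^2<\min\{h_0,\delta\}$ we have $\Gamma_h\subset\Gamma(\delta)$, and $\pi\in C^{0,1}(\Gamma(\delta))$. Hence the entire content reduces to showing that $\pi$ maps $\Gamma_h$ \emph{bijectively} onto $\Gamma$, and I would check this chart by chart. The key mechanism is that each normal ray to $\Gamma$ crosses $\Gamma_h$ exactly once; injectivity comes from ``at most once'' and surjectivity from ``at least once''.

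Fix $x\in\Gamma$ and a local coordinate $(U_r,y_r,\varphi_r)$ with $x\in U_r$, write $x=(\zeta',\varphi_r(\zeta'))$ and $K_r=K_r(\zeta')$, so $n(x)=K_r^{-1}(\nabla'\varphi_r(\zeta'),-1)$. Using the standard facts implicit in \eref{eq:decomposition} that along a normal ray the signed distance is the affine parameter and the unit normal is constant (i.e.\ $d(x+tn(x))=t$ and $\nabla d(x+tn(x))=n(x)$ for $|t|<\delta$), a point $P$ satisfies ``$P\in\Gamma_h$ and $\pi(P)=x$'' iff $\gamma(t):=x+tn(x)$ lies on the graph of $\varphi_{rh}$, i.e.\ iff $g(t)=0$ where $g(t):=\varphi_{rh}(\eta(t))-v(t)$ with $\eta(t)=\zeta'+\tfrac{t}{K_r}\nabla'\varphi_r(\zeta')$ and $v(t)=\varphi_r(\zeta')-\tfrac{t}{K_r}$. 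The central claim is that $g$ is strictly increasing on a fixed interval $|t|\le T$: $g$ is continuous and piecewise affine in $t$, and on each linear piece $g'(t)=K_r^{-1}\big(\nabla'\varphi_{rh}(\eta(t))\cdot\nabla'\varphi_r(\zeta')+1\big)$; since $|\eta(t)-\zeta'|\le|t|$, the $C^1$-interpolation estimate $\|\nabla'(\varphi_r-\varphi_{rh})\|_{L^\infty(\Delta_r)}\le C_{1E}h$ together with $\|\nabla'^2\varphi_r\|_{L^\infty(\Delta_r)}\le C_2$ gives $|\nabla'\varphi_{rh}(\eta(t))-\nabla'\varphi_r(\zeta')|\le C_{1E}h+C_1C_2|t|$, whence $\nabla'\varphi_{rh}(\eta(t))\cdot\nabla'\varphi_r(\zeta')\ge -Ch-C|t|$ and $g'(t)\ge\tfrac{1}{2K_r}>0$ once $h$ and $T$ are small enough.

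Injectivity then follows: if $\pi(P_1)=\pi(P_2)=x$ with $P_i\in\Gamma_h$, then $P_i=\gamma(s_i)$ with $|s_i|=|d(P_i)|\le C_{0E}h^2\le T$, so $s_1,s_2$ are both zeros of the strictly monotone $g$, forcing $s_1=s_2$ and $P_1=P_2$; this is globally consistent because every such $x$ and the relevant normal segment lie inside a single $U_r$, and the decomposition \eref{eq:decomposition} is unique. Surjectivity follows from the $C^0$-interpolation estimate $|g(0)|=|\varphi_{rh}(\zeta')-\varphi_r(\zeta')|\le C_{0E}h^2$ combined with $g'\ge\tfrac{1}{2K_r}$ on $[-T,T]$: the intermediate value theorem yields a zero $t^*$ with $|t^*|\le 2K_rC_{0E}h^2$, and the $h_0$-property (that $B(x;h_0)\subset U_r$, hence a fixed-size $\Delta_r$-neighbourhood of $\zeta'$ is available) guarantees $\eta(t^*)\in\Delta_r$ for $h$ small, so $\gamma(t^*)\in\Gamma_h$ and $\pi(\gamma(t^*))=x$. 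Compactness of $\Gamma_h$ and the Hausdorff property of $\Gamma$ then upgrade the continuous bijection to a homeomorphism.

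The main obstacle is the monotonicity estimate of the second paragraph: $\varphi_{rh}$ is only piecewise linear, so $g$ is merely Lipschitz and one cannot argue via a nonvanishing Jacobian of the chart-to-chart map. One must instead exploit that the segment $t\mapsto\eta(t)$ along which the element-wise constant gradients of $\varphi_{rh}$ are compared with the single value $\nabla'\varphi_r(\zeta')$ has length $O(h^2)$, so that the $O(h)$ interpolation error of the gradient dominates and stays below $1$ uniformly along the ray. A secondary, purely bookkeeping point is matching the fixed covering $\{U_r\}$ with the mesh size near chart boundaries, which is exactly what the constant $h_0$ is for.
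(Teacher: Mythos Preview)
Your proof is correct and follows essentially the same approach as the paper: both arguments reduce to showing that for each $x\in\Gamma$ the normal ray $t\mapsto x+tn(x)$ meets $\Gamma_h$ exactly once, and both verify this by proving that the height function (your $g$, the paper's $H=-g$) is strictly monotone on a short interval via the estimate $1+\nabla'\varphi_r(\zeta')\cdot\nabla'\varphi_{rh}(\eta(t))\ge K_r^2-O(h)-O(|t|)$. The only cosmetic difference is that the paper packages the result as an explicit inverse $\pi^*(x)=x+t^*(x)n(x)$ and argues its continuity directly, whereas you invoke the compact-to-Hausdorff theorem to promote the continuous bijection to a homeomorphism; your shortcut is slightly cleaner but not a different route.
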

\begin{proof}
	Let us construct an inverse map $\pi^*: \Gamma\to\Gamma_h$ of $\pi|_{\Gamma_h}$ which is continuous.
	To this end, we fix arbitrary $x\in\Gamma$ and choose a local coordinate $(U_r, y_r, \varphi_r)$ of $x$ such that $B(x; h_0)\subset U_r$.
	For simplicity, we omit the subscript $r$ in the following.
	In view of the definition of $\pi(x)$ by \eref{eq:decomposition}, we introduce a segment given by
	\begin{equation*}
		\begin{pmatrix} y' \\ \varphi(y') \end{pmatrix} + \frac{t}{K(y')} \begin{pmatrix} \nabla'\varphi(y') \\ -1 \end{pmatrix}, \qquad |t|\le 2C_{0E}h^2.
	\end{equation*}
	To each point on this segment we associate its height $H(t)$ with respect to the graph of $\varphi_h$, that is,
	\begin{equation*}
		H(t) = \varphi(y') - \frac{t}{K(y')} - \varphi_h\left( y' +  \frac{t}{K(y')} \nabla'\varphi(y') \right).
	\end{equation*}
	Then we assert that $\frac{d}{dt}H(t) < 0$ and that $H(-2C_{0E}h^2) > 0$, $H(2C_{0E}h^2) < 0$.
	
	To prove the first assertion, letting
	\begin{equation*}
		Y' := y' +  \frac{t}{K(y')} \nabla'\varphi(y'), 
	\end{equation*}
	we have $\frac{d}{dt}H(t) = -\frac1{K(y')}(1 + \nabla'\varphi(y')\cdot \nabla'\varphi_h(Y'))$.
	One sees that
	\begin{align*}
		\nabla'\varphi_h(Y') &= \nabla'\varphi(y') - \nabla'\varphi(y') + \nabla'\varphi(Y') - \nabla'\varphi(Y') + \nabla'\varphi_h(Y') \\
			&=: \nabla'\varphi(y') + I_1 + I_2,
	\end{align*}
	where $I_1$ and $I_2$ satisfy
	\begin{equation*}
		|I_1| \le \|\nabla'^2\varphi\|_{L^\infty(\Delta)} |Y' - y'| \le C_2\cdot 2C_{0E}h^2, \qquad |I_2| \le C_{1E}h.
	\end{equation*} 
	Then it follows that
	\begin{equation*}
		1 + \nabla'\varphi(y')\cdot \nabla'\varphi_h(Y') \ge K(y')^2 - C_1(2C_2C_{0E}h^2 + C_{1E}h).
	\end{equation*}
	From this we have $\frac{d}{dt}H(t) < 0$ provided $C_1(2C_2C_{0E}h^2 + C_{1E}h) < 1/2$.
	For the second assertion, one finds that
	\begin{align*}
		H(-2C_{0E}h^2) &= \frac{2C_{0E}h^2}{K(y')} + \varphi(y') - \varphi_h(Y') \\
			&= \frac{2C_{0E}h^2}{K(y')} + \varphi(y') - \varphi(Y') + \varphi(Y') - \varphi_h(Y'),
	\end{align*}
	where $Y'$ is $y' -  \frac{2C_{0E}h^2}{K(y')} \nabla'\varphi(y')$.
	By Taylor's theorem, there exists some $\theta\in(0,1)$ such that
	\begin{align*}
		\varphi(Y') - \varphi(y') &= \nabla'\varphi(y')\cdot(Y' - y') + \frac12 (Y' - y')^T \nabla'^{2}\varphi|_{y' + \theta(Y' - y')} (Y' - y') \\
			&\le - \frac{2C_{0E}h^2}{K(y')} |\nabla'\varphi(y')|^2 + 2 C_2C_{0E}^2 h^4.
	\end{align*}
	By the definition of $K(y)$ and by $|\varphi(Y') - \varphi_h(Y')| \le C_{0E}h^2$, we obtain
	\begin{align*}
		H(-2C_{0E}h^2) &\ge 2K(y')C_{0E}h^2 - 2C_2C_{0E}^2 h^4 - C_{0E} h^2 \\
			&\ge 2C_{0E}h^2 - 2C_2C_{0E}^2 h^4 - C_{0E} h^2 = C_{0E}h^2(1 - 2C_2C_{0E}h^2),
	\end{align*}
	which implies that $H(-2C_{0E}h^2) > 0$ provided $C_2C_{0E}h^2 \le 1/4$.
	In the same way, the last assertion $H(2C_{0E}h^2) < 0$ can be proved.
	
	From these assertions we deduce that there exists a unique $t^*(x) \in[-2C_{0E}h^2, 2C_{0E}h^2]$ such that $H(t^*(x)) = 0$.
	Consequently, the map $\pi^*: \Gamma\to\Gamma_h; \; x\mapsto x + t^*(x)n(x)$ is well-defined.
	A direct computation combined with the uniqueness of the decomposition \eref{eq:decomposition} shows that $\pi^*$ is the inverse of $\pi|_{\Gamma_h}$.
	The continuity of $\pi^*$, especially that of $t^*$, follows from an argument similar to the proof of the implicit function theorem (see e.g.\ \cite[Theorem 3.2.1]{KrPa02}).
\end{proof}

\pref{prop:homeo} enables us to define an \emph{exact triangulation of $\Gamma$} by
\begin{equation*}
	\pi(\mathcal S_h) = \{ \pi(S) \,:\, S\in\mathcal S_h \}.
\end{equation*}
In particular, we can subdivide $\Gamma$ into disjoint sets as $\Gamma = \bigcup_{S\in\mathcal S_h}\pi(S)$.
Furthermore, for each $S\in\mathcal S_h$ we see that $S$ and $\pi(S)$ admit the same domain of parametrization, which is important in the subsequent analysis.
To describe this fact, we choose a local coordinate $(U_r, y_r, \varphi_r)$ such that $U_r \supset S\cup\pi(S)$, and introduce the \emph{projection to the base set} $b_r:\mathbb R^N\to\mathbb R^{N-1}$ by $b_r(y_r) = y_r'$.
The domain of parametrization is then defined to be $S' = b_r(\pi(S))$.
We observe that the mappings
\begin{equation*}
	\begin{matrix*}[l]
		&\Phi: S' \to \pi(S); & &y_r' \mapsto (y_r', \varphi_{r}(y_r'))^T, \\
		&\Phi_h: S' \to S;   & &y_r' \mapsto \pi^*(y_r', \varphi_{r}(y_r')) = \Phi(y_r') + t^*(y_r')\, n(\Phi(y_r')),
	\end{matrix*}
\end{equation*}
are bijective and that $\Phi$ is smooth on $S'$.
If in addition $\Phi_h$, especially $t^*$, is also smooth on $S'$, then $\Phi$ and $\Phi$ may be employed as smooth parametrizations for $S$ and $\pi(S)$ respectively.
The next proposition verifies that this is indeed the case.

\begin{prop} \label{prop:estimate of Phi}
	Under the setting above, we have
	\begin{equation*}
		\|t\|_{L^\infty(S')} \le \tilde C_{0E} h^2, \qquad  \|\nabla' t\|_{L^\infty(S')} \le \tilde C_{1E} h,
	\end{equation*}
	where $\tilde C_{0E}$ and $\tilde C_{1E}$ are constants depending only on $N$ and $\Gamma$.
\end{prop}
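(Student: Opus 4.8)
The first bound is immediate: in the proof of \pref{prop:homeo} it was shown that the unique root $t = t^*(y')$ of $H(\cdot)=0$ lies in $[-2C_{0E}h^2,\, 2C_{0E}h^2]$, so one may simply take $\tilde C_{0E} = 2C_{0E}$. Thus the real content is the gradient estimate, and the plan is to derive it from the implicit function theorem applied to
\begin{equation*}
	H(t, y') := \varphi_r(y') - \frac{t}{K_r(y')} - \varphi_{rh}\!\left( y' + \frac{t}{K_r(y')}\nabla'\varphi_r(y') \right) = 0,
\end{equation*}
now viewed as an equation for $t$ depending on the parameter $y'\in S'$. Writing $Y' := y' + \frac{t}{K_r(y')}\nabla'\varphi_r(y')$, the proof of \pref{prop:homeo} already provides the lower bound on the $t$-derivative,
\begin{equation*}
	\partial_t H = -\frac{1}{K_r(y')}\bigl(1 + \nabla'\varphi_r(y')\cdot\nabla'\varphi_{rh}(Y')\bigr), \qquad 1 + \nabla'\varphi_r(y')\cdot\nabla'\varphi_{rh}(Y') \ge \tfrac12,
\end{equation*}
so that $|\partial_t H| \ge (2\sqrt{1+C_1^2})^{-1}$, uniformly in $h$, $y'$, and $r$. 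Hence $\nabla' t = -(\partial_t H)^{-1}\nabla'_{y'}H$ evaluated at $(t^*(y'), y')$, and everything reduces to bounding the numerator by $Ch$.

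The key step is the estimate $|\nabla'_{y'}H|\le Ch$ at $(t^*(y'), y')$. Differentiating $H$ in $y'$ by the chain rule gives
\begin{equation*}
	\nabla'_{y'}H = \nabla'\varphi_r(y') - (\partial_{y'}Y')^{\mathsf T}\nabla'\varphi_{rh}(Y') + t\,R(t, y'),
\end{equation*}
where $R$ collects the contribution of differentiating $t/K_r(y')$ and is bounded in $L^\infty$ by a constant depending only on $C_1, C_2$; since $|t|\le 2C_{0E}h^2$ on $S'$, this last term is $O(h^2)$. Moreover $\partial_{y'}Y' = I + tM$ with $\|M\|_{L^\infty}$ controlled by $C_1, C_2$ (as $M$ is built from $\nabla'(1/K_r)$, $\nabla'\varphi_r$, and $\nabla'^2\varphi_r$), so the $tM$-part again contributes only $O(h^2)$. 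It therefore suffices to estimate $\nabla'\varphi_r(y') - \nabla'\varphi_{rh}(Y')$, which we split as
\begin{equation*}
	\bigl(\nabla'\varphi_r(y') - \nabla'\varphi_r(Y')\bigr) + \bigl(\nabla'\varphi_r(Y') - \nabla'\varphi_{rh}(Y')\bigr).
\end{equation*}
The first difference is at most $C_2|y'-Y'| \le 2C_2C_{0E}h^2$, because $|\nabla'\varphi_r|/K_r < 1$ forces $|y'-Y'| \le |t|$; the second is at most $\|\nabla'(\varphi_r - \varphi_{rh})\|_{L^\infty(\Delta_r)} \le C_{1E}h$. Combining, $|\nabla'_{y'}H| \le C_{1E}h + Ch^2 \le Ch$ for $h$ small, and dividing by the uniform lower bound for $|\partial_t H|$ yields $\|\nabla' t\|_{L^\infty(S')} \le \tilde C_{1E}h$ with $\tilde C_{1E}$ depending only on $C_0, C_1, C_2, C_{0E}, C_{1E}$, hence only on $N$ and $\Gamma$.

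Two minor technical points have to be dispatched along the way. One must justify that $t$ is actually differentiable so that the implicit function theorem applies: for $y'$ in the interior of $S'$ the point $\Phi_h(y')$ lies in the interior of the single simplex $S$, so $\varphi_{rh}$ is affine in a neighbourhood of $Y'(t^*(y'), y')$ — and hence also for $t$ ranging near $t^*(y')$ — which makes $H$ genuinely $C^1$ there; the $L^\infty$ bounds then extend to all of $S'$ by continuity. One also has to verify the crude bounds $\|M\|_{L^\infty}, \|R\|_{L^\infty} \le C(C_1, C_2)$ from the explicit formulas for the derivatives of $t/K_r(y')$ and of $Y'(t,y')$, which is a routine computation using $K_r = \sqrt{1 + |\nabla'\varphi_r|^2}$. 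I do not anticipate any genuine analytic obstacle; the only point requiring care is organising the chain rule for $\nabla'_{y'}H$ so as to cleanly isolate the single $O(h)$ term, namely the interpolation error $\nabla'(\varphi_r - \varphi_{rh})$, from the $O(h^2)$ remainder.
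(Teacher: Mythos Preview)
Your argument is correct and follows essentially the same route as the paper: implicit differentiation of the defining relation, the lower bound $1 + \nabla'\varphi\cdot\nabla'\varphi_h(Y') \ge 1/2$ for the denominator, and the same splitting of $\nabla'\varphi(y') - \nabla'\varphi_h(Y')$ into an $O(h^2)$ Lipschitz term and the $O(h)$ interpolation error. The only cosmetic difference is that the paper first substitutes $\tilde t := t^*/K_r$, which turns the implicit equation into $\tilde t = \varphi(y') - \varphi_h(y' + \tilde t\,\nabla'\varphi(y'))$ and thereby eliminates your remainder terms $R$ and $M$ from the outset; your direct differentiation of $H$ works just as well but carries those harmless $O(h^2)$ pieces explicitly.
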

\begin{proof}
	Since the first relation is already obtained in \pref{prop:homeo} with $\tilde C_{0E} = 2C_{0E}$, we focus on proving the second one.
	For notational simplicity, we omit the subscript $r$ and also use the abbreviation $\partial_i = \frac{\partial}{\partial y_i}\,(i=1,\dots,N)$.
	The fact that $t^*$ is differentiable with respect to $y'$ can be shown in a way similar to the proof of the implicit function theorem.
	Thereby it remains to evaluate the supremum norm of $\nabla' t$ in $S'$, which we address in the following.
	
	Recall that $t^*(y')$ is determined according to the equation
	\begin{equation} \label{eq:tilde t}
		\tilde t(y') = \varphi(y') - \varphi_h(y' + \tilde t(y') \nabla'\varphi(y')),
	\end{equation}
	where we have set $\tilde t(y') := t^*(y')/K(y')$. Because $\nabla't^* = K\nabla'\tilde t + \frac{\nabla'^2\varphi \nabla'\varphi}{K}\, t^*$, it follows that
	\begin{equation*}
		\|\nabla't^*\|_{L^\infty(S')} \le (1 + C_1) \|\nabla'\tilde t\|_{L^\infty(S')} + C_2C_1\tilde C_{0E}h^2.
	\end{equation*}
	Therefore, it suffices to prove that $\|\nabla'\tilde t\|_{L^\infty(S')} \le Ch$; here and hereafter $C$ denotes various constants which depends only on $N$ and $\Gamma$.
	
	Applying $\nabla'$ to \eref{eq:tilde t} gives
	\begin{equation} \label{eq:nabla prime tilde t}
		\big( 1 + \nabla'\varphi(y')\cdot \nabla'\varphi_h(Y') \big) \nabla'\tilde t = \nabla'\varphi(y') - \nabla'\varphi_h(Y') + \tilde t(y') \nabla'^2\varphi(y') \nabla'\varphi_h(Y'),
	\end{equation}
	where $Y' := y' + \tilde t(y') \nabla'\varphi(y')$.
	By the same way as we estimated $I_1$ and $I_2$ in the proof of \pref{prop:homeo}, we obtain
	\begin{align*}
		|\nabla'\varphi(y') - \nabla'\varphi_h(Y')| &\le C_2\tilde C_{0E}h^2 + C_{1E}h \le Ch, \\
		1 + \nabla'\varphi(y')\cdot\nabla'\varphi_h(Y') &\ge K(y')^2 - C_1(C_2\tilde C_{0E}h^2 + C_{1E}h) \ge \frac12.
	\end{align*}
	Also we see that
	\begin{equation*}
		|\tilde t(y') \nabla'^2\varphi(y') \nabla'\varphi_h(Y')| \le \tilde C_{0E}h^2\cdot C_2C_1 \le Ch^2.
	\end{equation*}
	Combining these observations with \eref{eq:nabla prime tilde t}, we deduce the desired estimate $\|\nabla'\tilde t\|_{L^\infty(S')} \le Ch$.
\end{proof}
\begin{rem}
	Let $\Gamma\in C^{2,1}$.
	Since $\varphi_h$ is linear on $b(S)$, further differentiation of \eref{eq:nabla prime tilde t} gives us $t^*\in C^{1,1}(\pi(S))$; in fact we have $\|\nabla'^2t\|_{L^\infty(S')} \le \tilde C_{2E}$.
	This implies that $\pi|_{S}$ is a $C^{1,1}$-diffeomorphism between $S$ and $\pi(S)$.
	However, since $\nabla'\phi_h$ is smooth only within $b(S)$, $\pi$ is not globally a diffeomorphism.
\end{rem}

Now we give an error estimate for surface integrals on $\Gamma$ and $\Gamma_h$.
Heuristically speaking, the result reads $|d\gamma - d\gamma_h| \le O(h^2)$, which may be found in the literature (see e.g.\ \cite{Dzi88}).
Here and hereafter, we denote the surface elements of $\Gamma$ and $\Gamma_h$ by $d\gamma$ and $d\gamma_h$, respectively.
\begin{thm} \label{thm:error estimate of surface integral}
	Let $S\in\mathcal S_h$ and $f$ be an integrable function on $S$.
	Then we have
	\begin{equation*}
		\left| \int_{\pi(S)}f\,d\gamma - \int_S f\circ\pi\, d\gamma_h \right| \le Ch^2 \int_S |f|\,d\gamma,
	\end{equation*}
	where $C$ is a constant depending only on $N$ and $\Gamma$.
\end{thm}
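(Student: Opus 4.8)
The plan is to push both integrals onto the common parameter domain $S'$ and to show that the two induced surface measures differ only by a factor $1+O(h^2)$. Fix a local coordinate $(U_r, y_r, \varphi_r)$ with $U_r \supset S\cup\pi(S)$, write $\varphi = \varphi_r$ and $K(y') = (1+|\nabla'\varphi(y')|^2)^{1/2}$, and recall from the discussion around \pref{prop:estimate of Phi} that $\Phi(y') = (y',\varphi(y'))$ parametrizes $\pi(S)$ over $S'$, that $\Phi_h(y') = \Phi(y') + t^*(y')\,(n\circ\Phi)(y')$ parametrizes $S$ over $S'$, that both are bijective, and that $\pi\circ\Phi_h = \Phi$. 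Applying the change of variables formula to each integral gives
\[
 \int_{\pi(S)} f\,d\gamma = \int_{S'} (f\circ\Phi)\,J\,dy', \qquad \int_S f\circ\pi\,d\gamma_h = \int_{S'} (f\circ\Phi)\,J_h\,dy',
\]
where $J := (\det D\Phi^T D\Phi)^{1/2} = K$ and $J_h := (\det D\Phi_h^T D\Phi_h)^{1/2}$; the second identity uses $\pi\circ\Phi_h = \Phi$ so that the integrand is again $f\circ\Phi$. Subtracting, the left-hand side of the theorem is at most $\int_{S'}|f\circ\Phi|\,|K-J_h|\,dy'$, so it suffices to prove the pointwise bound $|K(y')-J_h(y')| \le Ch^2$ on $S'$ with $C=C(N,\Gamma)$: then, using $K\ge 1$, this quantity is $\le Ch^2\int_{S'}|f\circ\Phi|\,dy' \le Ch^2\int_{S'}|f\circ\Phi|\,K\,dy' = Ch^2\int_{\pi(S)}|f|\,d\gamma$.

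For the pointwise bound, put $\Psi := \Phi_h - \Phi = t^*\,(n\circ\Phi)$, $G := D\Phi^T D\Phi$, and $G_h := D\Phi_h^T D\Phi_h = G + D\Phi^T D\Psi + D\Psi^T D\Phi + D\Psi^T D\Psi$. Here $D\Psi = (n\circ\Phi)(\nabla' t^*)^T + t^*\,D(n\circ\Phi)$, and the crucial observation is that $D\Phi^T(n\circ\Phi)$ is the zero vector, since each column $\partial_i\Phi$ of $D\Phi$ is tangent to $\pi(S)\subset\Gamma$ whereas $n\circ\Phi$ is normal to $\Gamma$; hence $D\Phi^T D\Psi = t^*\,D\Phi^T D(n\circ\Phi)$ carries a full factor $t^*$. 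Invoking $\|t^*\|_{L^\infty(S')}\le Ch^2$ and $\|\nabla' t^*\|_{L^\infty(S')}\le Ch$ from \pref{prop:estimate of Phi}, together with the uniform bounds on $\nabla'\varphi$ and $\nabla'^2\varphi$ (hence on $D\Phi$ and $D(n\circ\Phi)$) furnished by $\Gamma\in C^{1,1}$, each of $D\Phi^T D\Psi$, $D\Psi^T D\Phi$, $D\Psi^T D\Psi$ is $O(h^2)$, so $\|G_h - G\| \le Ch^2$. Since $\det G = 1+|\nabla'\varphi|^2$ is bounded below by $1$ and above by a constant, for $h$ small $\det G_h$ also stays bounded below, whence $|\det G_h - \det G|\le Ch^2$ and $|J_h - K| = |\sqrt{\det G_h} - \sqrt{\det G}| \le Ch^2$, as required.

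The main obstacle is precisely recovering the second power of $h$ in this last bound. A crude estimate gives only $\|D\Phi_h - D\Phi\|_{L^\infty(S')} = \|D\Psi\|_{L^\infty(S')} \le Ch$ (because $\nabla' t^*$ is only $O(h)$, not $O(h^2)$), which would degrade the conclusion to the suboptimal $O(h)$. The extra power comes solely from the orthogonality $\partial_i\Phi\cdot(n\circ\Phi)=0$, which kills the $O(h)$ part of $D\Phi^T D\Psi$ and its transpose in the Gram matrix; once this cancellation is in hand, the remaining steps are routine manipulations with the a priori bounds already established in this appendix.
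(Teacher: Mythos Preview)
Your proof is correct and follows essentially the same route as the paper: both push the integrals to the common parameter domain $S'$, compare the Gram matrices $G$ and $G_h$, and recover the crucial second power of $h$ from the orthogonality $\partial_i\Phi\cdot(n\circ\Phi)=0$, which eliminates the $O(h)$ contribution of $\nabla' t^*$ in the cross terms. The only difference is cosmetic---you work in matrix notation ($D\Phi$, $D\Psi$) while the paper computes componentwise---and both arrive at the bound $Ch^2\int_{\pi(S)}|f|\,d\gamma$ (the $\int_S$ in the stated inequality is a typo for $\int_{\pi(S)}$).
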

\begin{proof}
	Let $(U_r, y_r, \varphi_r)$ be a local coordinate that contains $S\cup\pi(S)$.
	We omit the subscript $r$ and use the abbreviation $\partial_i = \frac{\partial}{\partial y_i}\,(i=1,\dots,N)$.
	We represent the surface integral using the parametrization $\Phi$ as follows:
	\begin{equation*}
		\int_{\pi(S)}f\,d\gamma = \int_{S'} f(\Phi(y')) \sqrt{\mathrm{det}\,G}\,dy',
	\end{equation*}
	where $G=(G_{ij})_{1\le i,j\le N-1}$ denotes the Riemannian metric tensor given by $G_{ij} = \partial_i\Phi \cdot \partial_j\Phi$ (dot means the inner product in $\mathbb R^N$).
	Similarly, noting that $\pi\circ\Phi_h = \Phi$, one obtains
	\begin{equation*}
		\int_{S}f\circ\pi\,d\gamma_h = \int_{S'} f(\Phi(y')) \sqrt{\mathrm{det}\,G_h}\,dy',
	\end{equation*}
	where $G_h$ is given by $G_{h,ij} = \partial_i\Phi_h \cdot \partial_j\Phi_h$. Then we assert that:
	\begin{equation*}
		\|G_h - G\|_{L^\infty(S')} \le Ch^2.
	\end{equation*}
	
	To prove this, noting that $\Phi_h = \Phi + t^*n\circ\Phi$, we compute each component of $G_h-G$ as follows:
	\begin{align*}
		G_{h,ij} - G_{ij} &= \partial_i\Phi \cdot \partial_j(\Phi_h - \Phi) + \partial_j\Phi \cdot \partial_i(\Phi_h - \Phi) + \partial_i(\Phi_h - \Phi) \cdot \partial_j(\Phi_h - \Phi) \\
					&= \partial_i\Phi \cdot \partial_j(t^*n\circ\Phi) + \partial_j\Phi \cdot \partial_i(t^*n\circ\Phi) + \partial_i(t^*n\circ\Phi) \cdot \partial_j(t^*n\circ\Phi) \\
					&=: I_1 + I_2 + I_3.
	\end{align*}
	For $I_1$, we notice that $\partial_i\Phi$ is a tangent vector so that $\partial_i\Phi\cdot n\circ\Phi=0$. This yields
	\begin{equation*}
		I_1 = \partial_i\Phi\cdot t^*\partial_j(n\circ\Phi) = t^*\partial_i\Phi\cdot (\partial_jn + \partial_j\varphi\,\partial_Nn)|_{\Phi},
	\end{equation*}
	which is estimated by $\tilde C_{0E}h^2(1 + C_1)(C_2 + C_1C_2)$ thanks to \pref{prop:estimate of Phi}.
	$I_2$ can be bounded in the same manner.
	To estimate $I_3$, we observe that
	\begin{equation*}
		\partial_i(t^*n\circ\Phi) = (\partial_i t^*) n\circ\Phi + t^* \big( \partial_in + \partial_i\varphi\,\partial_Nn \big)|_{\Phi},
	\end{equation*}
	which is bounded by $\tilde C_{1E}h + \tilde C_{0E}h^2(C_2 + C_1C_2) \le Ch$.
	Similarly one gets $|\partial_j(t^*n\circ\Phi)| \le Ch$, hence it follows that $|I_3| \le Ch^2$.
	Therefore, $|G_{h,ij} - G_{ij}| \le Ch^2$, which proves the assertion.
	
	Now we use the following crude estimate for perturbation of determinants (cf.\ \cite[equation (3.13)]{Kno96}): if $A$ and $B$ are $N\times N$ matrices such that $|A_{ij}| \le a$ and $|B_{ij}|\le b$ for all $i,j$, then
	\begin{equation*}
		|\mathrm{det}\,(A+B) - \mathrm{det}\,A| \le N!N(a+b)^{N-1}b.
	\end{equation*}
	Combining this with the assertion above and also with $\sqrt\alpha - \sqrt\beta = (\alpha - \beta)/(\sqrt\alpha + \sqrt\beta)$, we obtain
	\begin{equation*}
		\|\sqrt{\mathrm{det}\,G} - \sqrt{\mathrm{det}\,G_h}\|_{L^\infty(S')} \le Ch^2.
	\end{equation*}
	In addition, note that $\sqrt{\mathrm{det}\,G} = \sqrt{1 + |\nabla'\varphi|^2} \ge 1$.
	Consequently,
	\begin{align*}
		\left| \int_{\pi(S)}f\,d\gamma - \int_S f\circ\pi\, d\gamma_h \right| \le Ch^2\,\int_{S'} |f(\Phi(y'))| \sqrt{\mathrm{det}\,G} \,dy' = Ch^2 \int_{\pi(S)}|f|\,d\gamma,
	\end{align*}
	which proves the theorem.
\end{proof}
\begin{rem}
	Adding up the results of the theorem for all $S\in\mathcal S_h$ yields
	\begin{equation*}
		\left| \int_\Gamma f\,d\gamma - \int_{\Gamma_h} f\circ\pi\, d\gamma_h \right| \le Ch^2 \int_\Gamma |f|\,d\gamma.
	\end{equation*}
	It also follows that $|\int_{\Gamma_h} f\circ\pi\, d\gamma_h| \le C\int_\Gamma |f|\,d\gamma$.
	Choosing in particular $|f|^p$ as the integrand gives $\|f\circ\pi\|_{L^p(\Gamma_h)} \le C^{1/p}\|f\|_{L^p(\Gamma)}$ for $p\in[1,\infty]$.
\end{rem}

Let $f$ be a smooth function given on $\Gamma$.
Then its transformation to $\Gamma_h$ is defined by $f\circ\pi$.
However, if $f$ is extended to a neighborhood of $\Gamma$, e.g.\ to $\Gamma(\delta)$, then we may also consider $f$'s natural trace on $\Gamma_h$.
The next theorem provides error estimation of these two quantities.

\begin{thm} \label{thm:trace and transform}
	Let $f\in W^{1,p}(\Gamma(\delta_1))$, where $p\in[1,\infty]$ and $\delta_1 \in [\tilde C_{0E}h^2, 2\tilde C_{0E}h^2]$. Then,
	\begin{equation*}
		\|f - f\circ\pi\|_{L^p(\Gamma_h)} \le C\delta_1^{1-1/p}\|f\|_{W^{1,p}(\Gamma(\delta_1))},
	\end{equation*}
	where $C$ is a constant depending only on $p,\,N,\,\Omega$.
\end{thm}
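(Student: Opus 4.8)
The plan is to work element by element over the boundary mesh $\mathcal S_h$, exploiting the parametrizations $\Phi,\Phi_h\colon S'\to\pi(S),S$ and the scalar function $t^*$ furnished by \pref{prop:estimate of Phi}, and to express $f-f\circ\pi$ as an integral of $\nabla f$ along the short normal segment joining a point of $S\subset\Gamma_h$ to its orthogonal projection on $\Gamma$. First I would reduce to the case $f\in C^1(\overline{\Gamma(\delta_1)})$: since $\Gamma\in C^{1,1}$ and $\delta_1$ is small, $\Gamma(\delta_1)$ is a bounded domain for which such functions are dense in $W^{1,p}(\Gamma(\delta_1))$ when $p<\infty$, and both sides of the claimed inequality vary continuously along such an approximation --- the right-hand side obviously, and the left-hand side because the trace operator sends $W^{1,p}(\Gamma(\delta_1))$ continuously into $L^p(\Gamma)$ and, applied on each flat piece $S$ interior to $\Gamma(\delta_1)$, into $L^p(\Gamma_h)$, while $\|g\circ\pi\|_{L^p(\Gamma_h)}\le C\|g\|_{L^p(\Gamma)}$ by the remark following \tref{thm:error estimate of surface integral}. (Alternatively one may invoke the characterization of Sobolev functions by absolute continuity on almost every line.) The case $p=\infty$ is then immediate, since $|f(\Phi_h(y'))-f(\Phi(y'))|\le|t^*(y')|\,\|\nabla f\|_{L^\infty(\Gamma(\delta_1))}\le\delta_1\|f\|_{W^{1,\infty}(\Gamma(\delta_1))}$ by \pref{prop:estimate of Phi}.

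For $1\le p<\infty$ and $f\in C^1$ I would fix $S\in\mathcal S_h$ and a local coordinate $(U_r,y_r,\varphi_r)$ with $U_r\supset S\cup\pi(S)$, and set $\Psi(y',s):=\Phi(y')+s\,n(\Phi(y'))$ for $y'\in S'$ and $|s|\le\delta_1$; then $\Phi(y')=\Psi(y',0)$, while by the decomposition \eref{eq:decomposition} and \pref{prop:estimate of Phi} one has $\Phi_h(y')=\Psi(y',t^*(y'))$ with $|t^*(y')|\le\tilde C_{0E}h^2\le\delta_1$. Since $\pi\circ\Phi_h=\Phi$, the fundamental theorem of calculus along $s\mapsto\Psi(y',s)$ gives
\begin{equation*}
	f(\Phi_h(y'))-f(\Phi(y')) = \int_0^{t^*(y')} \nabla f(\Psi(y',s))\cdot n(\Phi(y'))\,ds ,
\end{equation*}
and, because $|n(\Phi(y'))|=1$ and $|t^*(y')|\le\delta_1$, Hölder's inequality in $s$ yields the pointwise bound $|f(\Phi_h(y'))-f(\Phi(y'))|^p\le\delta_1^{p-1}\int_{-\delta_1}^{\delta_1}|\nabla f(\Psi(y',s))|^p\,ds$. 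Multiplying by $\sqrt{\det G_h}\le C$, integrating over $S'$ (recall $\int_S g\,d\gamma_h=\int_{S'}(g\circ\Phi_h)\sqrt{\det G_h}\,dy'$), and summing over $S\in\mathcal S_h$, one arrives at
\begin{equation*}
	\|f-f\circ\pi\|_{L^p(\Gamma_h)}^p \le C\delta_1^{p-1}\sum_{S\in\mathcal S_h}\int_{S'}\int_{-\delta_1}^{\delta_1} |\nabla f(\Psi(y',s))|^p\,ds\,dy'.
\end{equation*}

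The remaining step is to bound the double sum by $C\|\nabla f\|_{L^p(\Gamma(\delta_1))}^p$. The normal-coordinate map $(z,s)\mapsto z+s\,n(z)$ from $\Gamma\times(-\delta_1,\delta_1)$ onto $\Gamma(\delta_1)$ has Jacobian with respect to $d\gamma\otimes ds$ uniformly comparable to $1$ for $\delta_1$ small (a product of curvature factors $1+s\kappa_i$ close to $1$); combining this with the exact partition $\Gamma=\bigcup_{S\in\mathcal S_h}\pi(S)$ into essentially disjoint pieces (\pref{prop:homeo} and the discussion after it) and with the parametrizations $\Phi$, whose metric factor satisfies $1\le\sqrt{\det G}\le C$, a change of variables gives $\sum_{S}\int_{S'}\int_{-\delta_1}^{\delta_1}|\nabla f(\Psi(y',s))|^p\,ds\,dy'\le C\int_{\Gamma(\delta_1)}|\nabla f|^p\,dx$. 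Taking $p$-th roots then completes the argument. I expect this last change of variables to be the main obstacle: one must verify carefully that the tubular maps over the curved triangles $\pi(S)$ tile $\Gamma(\delta_1)$ with Jacobian bounded above and below independently of $h$ and $\delta_1$, so that the interior gradient norm on $\Gamma(\delta_1)$ dominates the fibered integrals; the remaining ingredients --- the fundamental theorem of calculus, Hölder's inequality, and the quantitative bounds $|t^*|\le\tilde C_{0E}h^2$ and $\sqrt{\det G_h}\le C$ --- are already available.
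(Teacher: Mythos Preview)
Your proposal is correct and follows essentially the same route as the paper: localize to each $S\in\mathcal S_h$, write $f(\Phi_h(y'))-f(\Phi(y'))$ as an integral of $\nabla f$ along the normal segment $s\mapsto\Psi(y',s)=\Phi(y')+s\,n(\Phi(y'))$, apply H\"older in $s$ using $|t^*|\le\delta_1$, and finish by the change of variables $\Psi:S'\times[-\delta_1,\delta_1]\to\pi(S,\delta_1)$ with Jacobian bounded below. The paper carries out the last step by writing the Jacobi matrix $J$ as a perturbation of $\tilde J=(\partial_1\Phi,\dots,\partial_{N-1}\Phi,n\circ\Phi)$ with $\det\tilde J=K(y')\ge1$ and $\|J-\tilde J\|_{L^\infty}\le C\delta_1$, which makes your ``main obstacle'' entirely routine; your curvature-factor description is an equivalent way to see the same bound. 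The only extras you add are the explicit density reduction to $C^1$ and the separate treatment of $p=\infty$, both harmless refinements.
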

\begin{proof}
	Since $\Gamma(\delta_1) = \bigcup_{S\in\mathcal S_h} \pi(S,\delta_1)$, where $\pi(S, \delta_1) = \{ x\in\Gamma(\delta_1) : \pi(x)\in\pi(S) \}$ denotes a tubular neighborhood of $S$, it suffices to prove that
	\begin{equation} \label{eq:error of trace and transform on S}
		\int_S |f - f\circ\pi|^p\,d\gamma \le C\delta_1^{p - 1} \int_{\pi(S, \delta_1)} |\nabla f|^p\,dx \qquad \forall S\in\mathcal S_h.
	\end{equation}
	To this end, using the notation in \tref{thm:error estimate of surface integral}, we estimate the left-hand side of \eref{eq:error of trace and transform on S} by
	\begin{align*}
		\int_S |f - f\circ\pi|^p\,d\gamma &= \int_{S'} |f\circ\Phi_h - f\circ\Phi|^p \sqrt{\mathrm{det}\,G_h}\,dy' \\
			&\le C \int_{S'} |f\circ\Phi_h - f\circ\Phi|^p\,dy'.
	\end{align*}
	Here, for fixed $y'\in S'$ we have
	\begin{align*}
		f(\Phi_h(y')) - f(\Phi(y')) &= \int_0^1 \frac{d}{ds} f\big( \Phi(y') + s(\Phi_h(y') - \Phi(y')) \big)\,ds \\
					            &= \int_0^1 \frac{d}{ds} f\big( \Phi(y') + s\, t^*(y') n(\Phi(y')) \big)\,ds \\
					            &= \int_0^1 t^*(y') n(\Phi(y')) \cdot \nabla f\big( \Phi(y') + s\, t^*(y') n(\Phi(y')) \big)\,ds \\
					            &= \int_0^{t^*(y')} n(\Phi(y')) \cdot \nabla f\big( \Phi(y') + t n(\Phi(y')) \big)\,dt.
	\end{align*}
	Because $|t^*(y')| \le \tilde C_{0E}h^2 \le \delta_1$, it follows that
	\begin{align*}
		|f(\Phi_h(y')) - f(\Phi(y'))| &\le \int_{-\delta_1}^{\delta_1} \big| \nabla f\big( \Phi(y') + tn(\Phi(y')) \big) \big|\,dt \\
		                                       &\le (2\delta_1)^{1-1/p} \left(  \int_{-\delta_1}^{\delta_1} \big| \nabla f\big( \Phi(y') + tn(\Phi(y')) \big) \big|^p\,dt \right)^{1/p},
	\end{align*}
	where we have used H\"older's inequality. Consequently,
	\begin{equation} \label{eq:remove process}
		\int_S |f - f\circ\pi|^p\,d\gamma \le C\delta_1^{p-1} \int_{S'\times[-\delta_1,\delta_1]} \big| \nabla f\big( \Phi(y') + tn(\Phi(y')) \big) \big|^p\,dy'dt.
	\end{equation}
	
	On the other hand, we observe that the $N$-dimensional transformation
	\begin{equation} \label{eq:Psi}
		\Psi: S'\times[-\delta_1, \delta_1] \to \pi(S, \delta_1); \quad (y', t) \mapsto \Phi(y') + tn(\Phi(y'))
	\end{equation}
	is bijective and smooth.
	Application of this transformation to the right-hand side of \eref{eq:error of trace and transform on S} leads to
	\begin{equation*}
		\int_{\pi(S, \delta_1)} |\nabla f|^p\,dx = \int_{S'\times[-\delta_1, \delta_1]} \big| \nabla f\big( \Phi(y') + tn(\Phi(y')) \big) \big|^p\, |\mathrm{det}\, J| \,dy'dt,
	\end{equation*}
	where $J = (\partial_1\Phi + t\partial_1(n\circ\Phi), \cdots, \partial_{N-1}\Phi + t\partial_{N-1}(n\circ\Phi), n\circ\Phi)$ denotes the Jacobi matrix of $\Psi$.
	Letting $\tilde J := (\partial_1\Phi, \cdots, \partial_{N-1}\Phi, n\circ\Phi)$, we find that
	\begin{equation*}
		\|J - \tilde J\|_{L^\infty(S'\times[-\delta_1, \delta_1])} \le C\delta_1,
	\end{equation*}
	because $|t\partial_i(n\circ\Phi)| = |t (\partial_in + \partial_i\varphi \partial_Nn)_{|\Phi}| \le \delta_1(C_2 + C_1C_2)$.
	This implies
	\begin{equation*}
		\|\mathrm{det}\, J - \mathrm{det}\,\tilde J\|_{L^\infty(S'\times[-\delta_1, \delta_1])} \le C\delta_1,
	\end{equation*}
	which combined with $\mathrm{det}\,\tilde J = K(y') \ge 1$ yields $\mathrm{det}\,J \ge 1/2$ if $h$ is sufficiently small. Therefore,
	\begin{equation} \label{eq:restore process}
		\int_{\pi(S, \delta_1)} |\nabla f|^p\,dx \ge \frac12 \int_{S'\times[-\delta_1, \delta_1]} \big| \nabla f\big( \Phi(y') + tn(\Phi(y')) \big) \big|^p \,dy'dt.
	\end{equation}
	The desired estimate \eref{eq:error of trace and transform on S} is now a consequence of \eref{eq:remove process} and \eref{eq:restore process}. This completes the proof.
\end{proof}

Finally, we show that the $L^p$-norm in a tubular neighborhood can be bounded in terms of its width.
Such estimate is stated e.g.\ in \cite[Lemma 2.1]{Zha08} or in \cite[equation (3.6)]{Tab01}.
However, since we could not find a full proof of this fact (especially for $N=3$) in the literature, we present it here.

\begin{thm} \label{thm:Lp norm in tubular neighborhood}
	Under the same assumptions as in \tref{thm:trace and transform}, we have
	\begin{equation*}
		\|f\|_{L^p(\Gamma(\delta_1))} \le C (\delta_1 \|\nabla f\|_{L^p(\Gamma(\delta_1))} + \delta_1^{1/p} \|f\|_{L^p(\Gamma)}),
	\end{equation*}
	where $C$ is a constant depending only on $p,\,N,\,\Omega$.
\end{thm}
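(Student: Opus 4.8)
The plan is to reduce the estimate to a one-dimensional inequality in the normal direction, reusing the disjoint decomposition of $\Gamma(\delta_1)$ and the change of variables $\Psi$ from \eref{eq:Psi} that were already set up in the proof of \tref{thm:trace and transform}. First I would write $\Gamma(\delta_1) = \bigcup_{S\in\mathcal S_h}\pi(S,\delta_1)$, a union disjoint up to a set of measure zero, so that it suffices to prove the local bound
\begin{equation*}
	\int_{\pi(S,\delta_1)}|f|^p\,dx \le C\Big(\delta_1^p \int_{\pi(S,\delta_1)}|\nabla f|^p\,dx + \delta_1\int_{\pi(S)}|f|^p\,d\gamma\Big)
\end{equation*}
for every $S\in\mathcal S_h$, and afterwards sum over $S$ and take $p$-th roots, using $(a+b)^{1/p}\le a^{1/p}+b^{1/p}$, to recover the asserted form of the inequality (the resulting constant $C^{1/p}$ stays bounded for all $p\in[1,\infty]$).

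For the local bound I would apply $\Psi: S'\times[-\delta_1,\delta_1]\to\pi(S,\delta_1)$. Its Jacobian satisfies $1/2\le\det J\le C$ by the computation in the proof of \tref{thm:trace and transform}, so the left-hand side is comparable to $\int_{S'\times[-\delta_1,\delta_1]}|f(\Psi(y',t))|^p\,dy'\,dt$. For a.e.\ fixed $y'\in S'$ the function $s\mapsto f(\Psi(y',s))$ is absolutely continuous (this is where $f\in W^{1,p}(\Gamma(\delta_1))$ enters, via Fubini and the ACL characterization), and the fundamental theorem of calculus along the normal segment gives
\begin{equation*}
	f(\Psi(y',t)) = f(\Phi(y')) + \int_0^t n(\Phi(y'))\cdot\nabla f(\Psi(y',s))\,ds.
\end{equation*}
Hence, by $|a+b|^p\le 2^{p-1}(|a|^p+|b|^p)$ and Hölder's inequality applied to the $s$-integral over $[-\delta_1,\delta_1]$,
\begin{equation*}
	|f(\Psi(y',t))|^p \le 2^{p-1}|f(\Phi(y'))|^p + C\delta_1^{p-1}\int_{-\delta_1}^{\delta_1}|\nabla f(\Psi(y',s))|^p\,ds,
\end{equation*}
and integrating in $t$ over $[-\delta_1,\delta_1]$ yields a factor $2\delta_1$ on the first term and $C\delta_1^p$ on the second.

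Finally I would integrate in $y'$ over $S'$. For the first term, $\int_{S'}|f(\Phi(y'))|^p\,dy'\le\int_{\pi(S)}|f|^p\,d\gamma$ because $\sqrt{\det G}=\sqrt{1+|\nabla'\varphi|^2}$ lies between $1$ and a constant; for the second term, $\int_{S'\times[-\delta_1,\delta_1]}|\nabla f(\Psi(y',s))|^p\,dy'\,ds\le C\int_{\pi(S,\delta_1)}|\nabla f|^p\,dx$ by transforming back with $\Psi$ and using $\det J\ge 1/2$ once more. This gives the local bound, and summing over $S$ finishes the proof; the case $p=\infty$ follows from the same representation, bounding the $s$-integral by $\delta_1\|\nabla f\|_{L^\infty}$ directly, with $\delta_1^{1/p}$ read as $1$. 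I do not expect a genuine obstacle here: all the geometric ingredients — the disjointness of the pieces $\pi(S,\delta_1)$, the smoothness of $\Psi$, and the two-sided bounds on $\det J$ and $\sqrt{\det G}$ — are already available from the proof of \tref{thm:trace and transform}, so the only real content is the elementary normal-direction Hölder estimate together with careful bookkeeping of the powers of $\delta_1$.
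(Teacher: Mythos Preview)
Your proposal is correct and follows essentially the same route as the paper: localize to $\pi(S,\delta_1)$, change variables via $\Psi$, write $f(\Psi(y',t))=f(\Phi(y'))+\int_0^t n\cdot\nabla f(\Psi(y',s))\,ds$, apply H\"older in the normal direction, and convert back using the two-sided bounds on $\det J$ and $\sqrt{\det G}$. Your explicit mention of the ACL characterization and the separate treatment of $p=\infty$ are welcome clarifications that the paper leaves implicit.
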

\begin{proof}
	We adopt the same notation as in the proofs of Theorems \ref{thm:error estimate of surface integral} and \ref{thm:trace and transform}.
	Then it suffices to prove that
	\begin{equation*}
		\int_{\pi(S,\delta_1)} |f|^p\,dy \le C \left( \delta_1^p \int_{\pi(S,\delta_1)} |\nabla f|^p\,dy + \delta_1 \int_{\pi(S)} |f|^p\,d\gamma \right) \qquad \forall S\in\mathcal S_h.
	\end{equation*}
	To this end, using the transformation $\Psi$ given in \eref{eq:Psi} we express the left-hand side as
	\begin{align*}
		\int_{\pi(S,\delta_1)} |f|^p\,dy &= \int_{S'\times [-\delta_1,\delta_1]} |f(\Psi(y', t))|^p|\mathrm{det}\,J(y',t)|\,dy'dt \\
			&\le C \int_{S'\times [-\delta_1,\delta_1]} \big( |f(\Psi(y', t)) - f(\Phi(y'))|^p + |f(\Phi(y'))|^p \big) \,dy'dt \\
			&=: I_1 + I_2.
	\end{align*}
	For $I_1$, we see from the same argument as before that
	\begin{equation*}
		|f(\Psi(y', t)) - f(\Phi(y'))|^p \le (2\delta_1)^{p-1} \int_{-\delta_1}^{\delta_1} \big| \nabla f\big( \Phi(y') + sn(\Phi(y')) \big) \big|^p\,ds,
	\end{equation*}
	which yields
	\begin{equation*}
		|I_1| \le C\delta_1^p \int_{S'\times [-\delta_1,\delta_1]} \big| \nabla f(\Psi(y',s)) \big|^p |\mathrm{det}\,J(y',s)|\,dy'ds = C\delta_1^p \int_{\pi(S,\delta_1)} |\nabla f|^p\,dy.
	\end{equation*}
	For $I_2$, it follows that
	\begin{equation*}
		|I_2| = 2C\delta_1 \int_{S'} |f(\Phi(y'))|^p\,dy' \le 2C\delta_1 \int_{S'} |f(\Phi(y'))|^p \sqrt{\mathrm{det}\,G}\,dy' = 2C\delta_1 \int_{\pi(S)} |f|^p\,d\gamma.
	\end{equation*}
	We have thus obtained the desired estimate, which completes the proof.
\end{proof}

\section{Error of $n$ and $n_h$} \label{sec:error of n and nh}
Let us prove that $|n\circ\pi - n_h| \le O(h)$ on $\Gamma_h$ and also that, when $N=2$, it is improved to $O(h^2)$ if the consideration is restricted to the midpoint of edges.
\begin{lem} \label{lem:n and nh}
	Let $n$ and $n_h$ be the outer unit normals to $\Gamma$ and $\Gamma_h$ respectively. Then there holds
	\begin{equation} \label{eq:error between n and nh}
		\|n\circ\pi - n_h\|_{L^\infty(\Gamma_h)} \le Ch.
	\end{equation}
	If in addition $N=2$, $\Gamma\in C^{2,1}$, and $m_S$ denotes the midpoint of $S\in\mathcal S_h$, then
	\begin{equation} \label{eq:error between n and nh, 2D}
		\sup_{S\in\mathcal S_h}|n\circ\pi(m_S) - n_h(m_S)| \le Ch^2.
	\end{equation}
	Here, $C$ is a constant depending only on $N$ and $\Gamma$.
\end{lem}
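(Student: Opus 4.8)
My plan is to work entirely in the local coordinate systems $(U_r,y_r,\varphi_r)$ of Appendix~\ref{sec:transformation between Gamma and Gammah}, suppressing the index $r$. Fix $x\in\Gamma_h$, choose $S\in\mathcal S_h$ with $x\in S$, and pick a chart with $S\cup\pi(S)\subset U_r$. Writing $\pi(x)=(w',\varphi(w'))$ with $w'\in\Delta_r$, the local formula for the outer normal gives $n\circ\pi(x)=(1+|\nabla'\varphi(w')|^2)^{-1/2}\bigl(\nabla'\varphi(w'),-1\bigr)^{T}$, while $n_h$ is constant on $S$ and equals $(1+|\nabla'\varphi_h|^2)^{-1/2}\bigl(\nabla'\varphi_h,-1\bigr)^{T}$, where $\nabla'\varphi_h$ denotes the (constant) slope of the affine piece of $\varphi_h$ over $b(S)$. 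Since the map $G(v):=(1+|v|^2)^{-1/2}(v,-1)^{T}$ is globally Lipschitz, the whole lemma reduces to estimating an interpolation gradient error:
\[
  |n\circ\pi(x)-n_h(x)|\le C\,\bigl|\nabla'\varphi(w')-\nabla'\varphi_h\bigr| .
\]

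For \eref{eq:error between n and nh} I would observe that $x\in S\subset\Gamma_h$ lies within vertical distance $\|\varphi-\varphi_h\|_{L^\infty(\Delta_r)}\le C_{0E}h^2$ of $\Gamma$, hence $|\pi(x)-x|=|d(x)|\le C_{0E}h^2$ and $|w'-b(x)|\le C_{0E}h^2$; since $b(x)\in b(S)$ and $\mathrm{diam}\,b(S)\le h$, any $z'\in b(S)$ satisfies $|w'-z'|\le Ch$. Then
\[
  \bigl|\nabla'\varphi(w')-\nabla'\varphi_h\bigr|
  \le \bigl|\nabla'\varphi(w')-\nabla'\varphi(z')\bigr|+\bigl|\nabla'\varphi(z')-\nabla'\varphi_h\bigr|
  \le C_2\,Ch+C_{1E}h ,
\]
the last term by the interpolation error estimate of Appendix~\ref{sec:transformation between Gamma and Gammah}. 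This proves \eref{eq:error between n and nh}.

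For the improved bound \eref{eq:error between n and nh, 2D} I would use the superconvergence of the slope of linear interpolation at the midpoint, which is a genuinely one-dimensional effect. When $N=2$, $S$ is an edge with endpoints $(p_0,\varphi(p_0))$, $(p_1,\varphi(p_1))$ on $\Gamma$, so $m_S=(c,\varphi_h(c))$ with $c=(p_0+p_1)/2$; since $\varphi_h$ interpolates $\varphi$ at $p_0,p_1$, one has $\varphi_h(c)=\tfrac12(\varphi(p_0)+\varphi(p_1))$ and $\varphi_h'=\tfrac{\varphi(p_1)-\varphi(p_0)}{\ell}$ with $\ell:=p_1-p_0\le h$. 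Using $\Gamma\in C^{2,1}$, i.e.\ $\varphi''$ Lipschitz, a second-order Taylor expansion of $\varphi$ about $c$ with integral remainder together with the symmetry $p_{0,1}=c\mp\ell/2$ make the $\varphi''(c)$ contribution in $\varphi(p_1)-\varphi(p_0)$ cancel, leaving $|\varphi_h'-\varphi'(c)|\le C\ell^2\le Ch^2$. Next let $\Phi(c)=(c,\varphi(c))\in\Gamma$ be the vertical projection of $m_S$; as $\Phi(c)$ and $\pi(m_S)$ both lie within $Ch^2$ of $m_S$ (by \pref{prop:estimate of Phi} and the interpolation bound on $\varphi-\varphi_h$, respectively), we get $|\pi(m_S)-\Phi(c)|\le Ch^2$, and the Lipschitz continuity of $n=\nabla d$ on $\Gamma(\delta)$ gives $|n\circ\pi(m_S)-n(\Phi(c))|\le C_2|\pi(m_S)-\Phi(c)|\le Ch^2$. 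Combining with $|n(\Phi(c))-n_h(m_S)|=|G(\varphi'(c))-G(\varphi_h')|\le C|\varphi'(c)-\varphi_h'|\le Ch^2$ yields \eref{eq:error between n and nh, 2D}.

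The main obstacle is the midpoint superconvergence estimate together with the need to disentangle the orthogonal projection $\pi(m_S)$ from the vertical projection $\Phi(c)$: only $\Phi(c)$ sits exactly over the midpoint where the $O(h^2)$ cancellation happens, though since both projections are $O(h^2)$-close to $m_S$, bridging the gap costs nothing. It is worth stressing that the exact cancellation of the $\varphi''$ term relies on the symmetric placement of the two endpoints about the midpoint of the edge, which has no counterpart for a simplex face when $N=3$; this is the structural reason \eref{eq:error between n and nh, 2D} is stated only in two dimensions.
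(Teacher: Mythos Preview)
Your proof is correct and follows essentially the same approach as the paper's. Both arguments work in local charts, use the explicit graph representation of the normals via the map $G(v)=(1+|v|^2)^{-1/2}(v,-1)^T$, bound the gradient interpolation error $|\nabla'\varphi-\nabla'\varphi_h|\le C_{1E}h$, and absorb the $O(h^2)$ discrepancy between the orthogonal projection $\pi(x)$ and the vertical projection $(b(x),\varphi(b(x)))$; for the two-dimensional midpoint estimate, both invoke the Taylor-expansion superconvergence $|\varphi'(c)-\varphi_h'|\le Ch^2$ at the edge midpoint, which the paper states in one sentence and you spell out explicitly. Your write-up is simply more detailed---in particular, the separate bookkeeping of $\pi(m_S)$ versus $\Phi(c)$ makes transparent a step the paper leaves implicit (it is the same $O(h^2)$ ``vertical vs.\ orthogonal projection'' correction already used in the proof of \eref{eq:error between n and nh}).
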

\begin{proof}
	Let $S\in\mathcal S_h$ be arbitrary and let $(U_r, y_r, \phi_r)$ be a local coordinate that contains $S\cup\pi(S)$.
	We omit the subscript $r$ in the following.
	One sees that $n$ and $n_h$ are represented as
	\begin{equation*}
		n(y', \varphi(y')) = \frac1{\sqrt{1 + |\nabla'\varphi|^2}} \begin{pmatrix}\nabla'\varphi \\ -1\end{pmatrix}, \quad
		n_h(y', \varphi_h(y')) = \frac1{\sqrt{1 + |\nabla'\varphi_h|^2}} \begin{pmatrix}\nabla'\varphi_h \\ -1\end{pmatrix}, \quad y'\in\Delta.
	\end{equation*}
	A direct computation gives
	\begin{equation} \label{eq:n - nh by graph representation}
		|n(y', \varphi(y')) - n_h(y', \varphi_h(y'))| \le 2|\nabla'(\varphi(y') - \varphi_h(y'))| \le 2C_{1E}h.
	\end{equation}
	This combined with the observation that
	\begin{align*}
		|n\circ\pi(y', \varphi_h(y')) - n(y', \varphi(y'))| &= |n\circ\pi(y', \varphi_h(y')) - n\circ\pi(y', \varphi(y'))| \\
			&\le C_1|\pi(y', \varphi_h(y')) - \pi(y', \varphi(y'))| \\
			&\le C_1 \|\nabla\pi\|_{L^\infty(\Gamma(\delta))} |\varphi_h(y') - \varphi(y')| \\
			&\le C_1 \|\nabla\pi\|_{L^\infty(\Gamma(\delta))} C_{0E}h^2
	\end{align*}
	proves \eref{eq:error between n and nh}.
	When $N=2$ and $\Gamma\in C^{2,1}$, by using Taylor expansion, we find that $m_S$ is a point of super-convergence such that $|\frac{d\varphi_h}{dy_1}(b(m_S)) - \frac{d\varphi}{dy_1}(b(m_S))| \le Ch^2$.
	This improves \eref{eq:n - nh by graph representation} to $O(h^2)$, and thus \eref{eq:error between n and nh, 2D} is proved.
\end{proof}
\begin{rem}
	In the case $N=3$, it is known that the barycenter of a triangle is not a point of super-convergence for the derivative of linear interpolations; see \cite[p.\ 1930]{WCH10}.
	For this reason, \eref{eq:error between n and nh, 2D} holds only for $N=2$.
\end{rem}

\section*{Acknowledgements}
The authors thank Professor Fumio Kikuchi, Professor Norikazu Saito, and Professor Masahisa Tabata for valuable comments to the results of this paper.
They also thank Professor Xuefeng Liu for giving them a program which converts matrices described by the PETSc-format into those by the CRS-format.
The first author was supported by JST, CREST.
The second author was supported by JSPS KAKENHI Grant Number 24224004, 26800089.
The third author was supported by JST, CREST and by JSPS KAKENHI Grant Number 23340023.


\end{document}